\numberwithin{equation}{section}
\newcommand{\M}{\mathbb{M}}
\newcommand{\N}{\mathbb{N}}
\newcommand{\Q}{\mathbb{Q}}
\newcommand{\R}{\mathbb{R}}
\newcommand{\Z}{\mathbb{Z}}
\newcommand{\cB}{{\ensuremath{\mathcal B}}}
\newcommand{\cD}{{\ensuremath{\mathcal D}}}
\newcommand{\mm}{{\mbox{\boldmath$m$}}}
\newcommand{\DDelta}{{\mbox{\boldmath$\Delta$}}}
\newcommand{\sfd}{{\sf d}}
\newcommand{\sfr}{{\sf r}}
\newcommand{\sfS}{{\sf S}}
\newcommand{\restr}[1]{\lower3pt\hbox{$|_{#1}$}}
\newcommand{\la}{{\big\langle}}                  
\newcommand{\ra}{{\big\rangle}}
\newcommand{\eps}{\varepsilon}  
\newcommand{\nchi}{{\raise.3ex\hbox{$\chi$}}}
\newcommand{\weakto}{\rightharpoonup}
\newtheorem{theorem}{Theorem}[section]
\newtheorem{corollary}[theorem]{Corollary}
\newtheorem{lemma}[theorem]{Lemma}
\newtheorem{proposition}[theorem]{Proposition}
\newtheorem{definition}[theorem]{Definition}
\newtheorem{example}[theorem]{Example}
\newtheorem{remark}[theorem]{Remark}
\newcommand{\Lip}{\mathrm{Lip}}
\newcommand{\diam}{\mathrm{diam}}
\newcommand{\fr}{\hfill$\blacksquare$}   
\newcommand{\RCD}{\mathrm{RCD}}
\renewcommand{\mm}{\mathfrak m}
\newcommand{\Cat}[1]{{\sf CAT}(#1)}
\newcommand{\lims}{\varlimsup}
\newcommand{\limi}{\varliminf}
\renewcommand{\limsup}{\varlimsup}
\renewcommand{\liminf}{\varliminf}
\renewcommand{\d}{{\rm d}}
\newcommand{\Geo}{{\sf Geo}}
\newcommand{\X}{{\rm X}}
\newcommand{\Y}{{\rm Y}}
\newcommand{\T}{{\rm T}}
\newcommand{\G}{{\sf G}}
\newcommand{\subd}[1]{\partial E ({#1})}
\newcommand{\sfks}{{\sf ks}}
\newcommand{\sfKS}{{\sf KS}}
\newcommand{\E}{{\sf E}}
\newcommand{\sfFL}{{\sf FI}}
\newcommand{\msp}{|\dot{y}_t|}
\title{A differential perspective on  Gradient Flows on $\Cat\kappa$-spaces and applications}
\author{Nicola Gigli}
\address{SISSA, Via Bonomea 265, 34136 Trieste}
\email{ngigli@sissa.it}
\author{Francesco Nobili}
\address{SISSA, Via Bonomea 265, 34136 Trieste}
\email{fnobili@sissa.it}
\begin{document}
\maketitle

\begin{abstract}
We review the theory of Gradient Flows in the framework of convex and lower semicontinuous functionals on $\Cat\kappa$-spaces and prove that they can be characterized by the same differential inclusion $y_t'\in-\partial^-\E(y_t)$ one uses in the smooth setting and more precisely that $y_t'$ selects the element of minimal norm in $-\partial^-\E(y_t)$. This generalizes previous results in this direction where the energy was also assumed to be Lipschitz.

We then apply such result to the Korevaar-Schoen energy functional on the space of $L^2$ and \Cat0 valued maps: we define the Laplacian of such $L^2$ map as the element of minimal norm in $-\partial^-\E(u)$, provided it is not empty. The theory of gradient flows ensures that the set of maps admitting a Laplacian is $L^2$-dense. Basic properties of this Laplacian are then studied.

\end{abstract}

\tableofcontents

\bigskip
\section{Introduction}\label{Sect1}
The theory of gradient flows in metric spaces has been initiated by De Giorgi and collaborators \cite{DeGiorgiMarinoTosques80}, \cite{DeGiorgi93} (see also the more recent \cite{AmbrosioGigliSavare08}): a basic feature of the approach is to provide a very general existence theory - at this level uniqueness is typically lost - without neither curvature assumptions on the space nor semiconvexity of the functional. 

In this setting gradient flow trajectories $(x_t)$ of $\E$  (or curves of maximal slopes) are  defined by imposing the maximal rate of dissipation
\[
\frac{\d}{\d t}\E(x_t)=-|\dot x_t|^2=-|\partial^-\E|^2(x_t),\qquad a.e.\ t,
\]
where here $|\dot x_t|$ is the metric speed of the curve (see Theorem \ref{thm:ms}) and $|\partial^-\E|$ is the slope of $\E$  (see \eqref{eq:defsl}). It has been later understood (\cite{AmbrosioGigliSavare08}, \cite{AmbrosioGigliSavare11-2}, \cite{Gigli12}, \cite{OP17}, \cite{MS20}) that if  $\E$ is $\lambda$-convex and the metric space has some form of some Hilbert-like structure at small scales, then an equivalent formulation can be given via the  so-called Evolution Variational Inequality
\begin{equation}
\frac{\d}{\d t}\frac{\sfd^2(x_t,y)}2+\E(x_t)+\frac\lambda2\sfd^2(x_t,y)\leq \E(y)\qquad a.e.\ t
\tag{EVI}
\end{equation}

for any choice of point $y$ on the space. See Theorem \ref{thm:GFdef} for the precise definitions and \cite{MS20} for a thorough study of the EVI condition.

The geometry of the metric space and the convexity properties of the functional under consideration greatly affect the kind of results one can obtain for gradient flows. For the purpose of this manuscript, the works \cite{Mayer98}, \cite{Jost98} are particularly relevant: it is showed that the classical Crandall-Liggett generation theorem can be generalized to the metric setting of \Cat0 spaces to produce a satisfactory theory of gradient flows for semi-convex and lower semicontinuous functionals. 

If the metric space one is working on admits some nicely-behaved tangent spaces/cones, one might hope to give a meaning to the classical defining formula
\[
x_t'\in-\partial^-\E(x_t)\qquad a.e.\ t
\] 
or to its more precise variant
\begin{equation}
\label{eq:gfi}
x_t'^+=\text{ the element of minimal norm in }-\partial^-\E(x_t)\qquad\forall t>0.
\end{equation}
This has been done in \cite{Lyt05}, where previous approaches in \cite{PP95} have been generalized. Here, notably, the basic assumptions on the metric space are of first order in nature (and refer precisely to the structure of tangent cones) and the energy functional is assumed to be semiconvex and locally Lipschitz. While the convexity assumption is very natural when studying gradient flows (all in all, even in the Hilbert setting many fundamental results rely on such hypothesis), asking for Lipschitz continuity is a bit less so: it certainly covers many concrete examples, for instance of functionals built upon distance functions on spaces satisfying some one-sided curvature bound, but from the analytic perspective it may be not satisfying: already the Dirichlet energy as a functional on $L^2$ is not Lipschitz, and the same holds for the Korevaar-Schoen energy we aim to study here.

\bigskip

Our motivation to study this topic comes from the desire of providing a notion of Laplacian for \Cat0-valued Sobolev maps, where here `Sobolev' is intended in the sense of Korevaar-Schoen \cite{KS93} (see also the more recent review of their theory done in \cite{GT20}). Denoting by $\E^{\sfKS}$ the underlying notion of energy and imitating one of the various equivalent definitions for the Laplacian in the classical smooth and linear setting, one is lead to define the Laplacian of $u$ as the element of minimal norm in $-\partial^-\E^\sfKS(u)$. This approach of course carries at least two tasks: to define what $-\partial^-\E$ is and to show that it is not empty for a generic convex and lower semicontinuous functional $\E$. Providing a reasonable definition for $-\partial^-\E$ is not that hard (see Definition \ref{def:md}), but is less obvious to show that this object is not-empty (in particular, minimizing $\E(\cdot)+\frac{\sfd^2(\cdot,x)}{2\tau}$ is of no help here, see the discussion in Remark \ref{re:mm}).  It is here that the theory of gradient flows comes to help:  

\bigskip

\begin{quote}
our main result is that, for semiconvex and lower semicontinuous functions on a $\Cat\kappa$ space, the analogue of \eqref{eq:gfi} holds, see Theorem \ref{thm:rightD}. 
\end{quote}

\bigskip

As a byproduct, we deduce that the domain of $-\partial^-\E$ is dense in the one of $\E$. A result similar to ours has been obtained in \cite{CKK19} under some additional geometric assumptions on the base space, which in some sense tell that there is the opposite of any tangent vector.

As said, we then apply this result to study the Laplacian of $\Cat0$-valued Sobolev maps. Let us remark that in this case the relevant metric space $L^2(\Omega,\Y_{\bar y})$ is that of $L^2$ maps from some open subset $\Omega$ of a metric measure space $\X$ to a pointed \Cat0 space $(\Y,\bar y)$ and the energy functional is the  Korevaar-Schoen energy $\E^\sfKS$: it is well known that $L^2(\Omega,\Y_{\bar y})$ is a \Cat0 space and that  $\E^\sfKS$ is convex and lower semicontinuous, but certainly not Lipschitz, whence the need to generalize Lytchak results to cover also this case.

Once we have a notion for $-\partial^-\E^\sfKS$ we enrich the paper with:
\begin{itemize}
\item[i)]  the actual definition of Laplacian $\Delta u$ of a $\Cat0$-valued map $u$ (Definition \ref{def:laplacian}), which pays  particular attention to the link between the tangent cones in $L^2(\Omega,\Y_{\bar y})$, where $-\partial^-\E^\sfKS$ lives, and the tangent cones in $\Y$, where we think `variations' of $u$ should live, see in particular Propositions \ref{prop:l2gen} and  \ref{prop:link},
\item[ii)] a basic, weak, integration by parts formula, see Proposition \ref{prop:varen}, which is sufficient to show that our approach is compatible with the classical one valid in the smooth category,
\item[iii)] a presentation of  a simple and concrete example (Example \ref{ex:s1}) showing why $\Delta u$ seems to be very much linked to the geometry of $\Y$, but less so to Sobolev calculus on it. 
\end{itemize}
\bigskip


Finally, we point out that this note is part of a larger program aiming at stating and proving the Eells-Sampson-Bochner inequality \cite{ES64} for Sobolev maps from (open subsets of a) $\RCD$ space $\X$ to a $\Cat0$ space $\Y$ (see \cite{DMGSP18,GPS18,GT20} for partial results in this direction): knowing what the Laplacian of a \Cat0-valued map is, is a crucial step for this program.

\bigskip{\bf Acknowledgement.} We thank A.\ Lytchak and M.\ Ba\v{c}\'{a}k for comments on an preliminary version of this manuscript. 

\section{Calculus on ${\sf CAT}(\kappa)$-spaces}\label{Sect2}
\subsection{$\Cat\kappa$-spaces}
Let us briefly recall some useful tools in metric spaces $(\Y,\sfd_{\Y})$.
\begin{definition}[Locally AC curve]
Let $(\Y,\sfd_{\Y})$ be a metric space and let $I \subset \R$ be an interval. A curve $I \ni t \mapsto \gamma_t \in \Y$ is \emph{absolutely continuous} if there exists a function $g \colon I\mapsto \R^+$ in $L^1(I)$ s.t.
\begin{equation}
    \sfd_{\Y}(\gamma_t,\gamma_s)\le \int_s^tg(r)\, \d r \qquad\forall s\le t \emph{ in } I. \label{eq:AC}
\end{equation}
Moreover, $\gamma$ is said to be locally absolutely continuous if every point admits a neighbourhood where it is absolutely continuous.
\end{definition}
Next, we state the existence of the metric counterpart of `modulus of velocity' of a curve.
\begin{theorem}[Metric speed]\label{thm:ms}
Let $(\Y,\sfd_{\Y})$ be a  metric space and let $I \subset \R$ be an interval. Then, for every AC curve $ I\ni t \mapsto \gamma_t \in \Y$, there exists the limit
$$ \lim_{h \downarrow 0} \frac{\sfd_{\Y}(\gamma_{t+h},\gamma_t)}{h} \qquad \text{a.e. } t  \in I,$$
which we denote by $\vert \dot{\gamma}_t\vert $ and call \emph{metric speed}. Moreover, it is the least, in the a.e. sense, function $L^1(I)$ that can be taken in (\ref{eq:AC}).
\end{theorem}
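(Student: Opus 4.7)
The plan is to use the classical argument that extracts the metric speed as a pointwise essential supremum over a countable family of scalar-valued absolutely continuous functions. First I observe that $\gamma(I)$ is separable, being the continuous image of an interval, so I can fix a countable set $\{y_n\}_{n\in\N}$ dense in $\overline{\gamma(I)}$. For each $n$ the function $\varphi_n(t):=\sfd_\Y(\gamma_t,y_n)$ is absolutely continuous on $I$: indeed, for any admissible $g\in L^1(I)$ as in \eqref{eq:AC}, the reverse triangle inequality gives
\[
|\varphi_n(t)-\varphi_n(s)|\leq\sfd_\Y(\gamma_t,\gamma_s)\leq\int_s^t g(r)\,\d r,
\]
so each $\varphi_n$ is differentiable a.e.\ with $|\varphi_n'|\leq g$ a.e.. Consequently the measurable function $m(t):=\sup_{n}|\varphi_n'(t)|$ lies in $L^1(I)$ and satisfies $m\leq g$ a.e.\ for every admissible $g$.

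Next I would show that $m$ is itself admissible in \eqref{eq:AC}. By density of $\{y_n\}$, choosing $y_{n_k}\to\gamma_s$ gives $\sfd_\Y(\gamma_t,\gamma_s)=\sup_n|\varphi_n(t)-\varphi_n(s)|$, hence
\[
\sfd_\Y(\gamma_t,\gamma_s)\leq\sup_n\int_s^t|\varphi_n'(r)|\,\d r\leq\int_s^t m(r)\,\d r.
\]
Combined with $m\leq g$ a.e., this proves the minimality assertion of the theorem: $m$ is the a.e.-least admissible function in \eqref{eq:AC}.

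Finally I would verify that the pointwise limit $\lim_{h\to 0}\sfd_\Y(\gamma_{t+h},\gamma_t)/|h|$ exists a.e.\ and equals $m(t)$. The upper bound follows from Lebesgue differentiation applied to $m\in L^1(I)$: for a.e.\ $t$,
\[
\limsup_{h\to 0}\frac{\sfd_\Y(\gamma_{t+h},\gamma_t)}{|h|}\leq\lim_{h\to 0}\frac{1}{|h|}\left|\int_t^{t+h}m(r)\,\d r\right|=m(t).
\]
For the reverse inequality, at any $t$ where each $\varphi_n$ is differentiable (a full-measure set, being a countable intersection), the estimate $|\varphi_n(t+h)-\varphi_n(t)|\leq\sfd_\Y(\gamma_{t+h},\gamma_t)$ gives $|\varphi_n'(t)|\leq\liminf_{h\to 0}\sfd_\Y(\gamma_{t+h},\gamma_t)/|h|$, and taking the supremum over $n$ yields $m(t)\leq\liminf_{h\to 0}\sfd_\Y(\gamma_{t+h},\gamma_t)/|h|$. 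The two bounds together give the claim and identify $|\dot\gamma_t|=m(t)$ a.e.

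The only mildly delicate step is invoking density of $\{y_n\}$ to recover $\sfd_\Y(\gamma_t,\gamma_s)$ as the supremum of the scalar increments $|\varphi_n(t)-\varphi_n(s)|$; everything else is a routine appeal to absolute continuity of real-valued functions and the Lebesgue differentiation theorem.
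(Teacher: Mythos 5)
Your proof is correct and is precisely the classical argument of \cite[Theorem 1.1.2]{AmbrosioGigliSavare08}, which is what the paper cites in lieu of a proof: separability of the image, the countable family $\varphi_n(t)=\sfd_\Y(\gamma_t,y_n)$, the envelope $m=\sup_n|\varphi_n'|$, and the two-sided comparison via Lebesgue differentiation. Nothing to add.
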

See, for the proof, \cite[Theorem 1.1.2]{AmbrosioGigliSavare08}. A curve $[0,1] \ni t\mapsto \gamma_t \in \Y$ is a minimizing constant speed geodesic (or  simply a geodesic) if $\sfd_{\Y}(\gamma_t,\gamma_s) =\vert t-s\vert \sfd_{\Y}(\gamma_0,\gamma_1)$, for every $t,s\in [0,1]$. We say that $\Y$ is a geodesic metric space provided for any couple of points, there exists a constant speed geodesic joining them. Whenever the geodesic connecting $y$ to $z$  is unique, we shall denote it by $\G_y^z$.

For $\kappa\in\R$, we call $\M_\kappa$, the \emph{model space} of curvature $\kappa$, i.e. the simply connected, complete 2-dimensional manifold with constant curvature $\kappa$, and $\sfd_\kappa$ the distance induced by the metric tensor. 
This restricts $(\M_\kappa,\sfd_\kappa)$ to only three possibilities: the hyperbolic space $\mathbb H^2_\kappa$ of constant sectional curvature $\kappa$, if $\kappa<0$, the plane $\R^2$ with usual euclidean metric, if $\kappa=0$, and the sphere $\mathbb{S}^2_\kappa$ of constant sectional curvature $\kappa$, if $\kappa>0$. Also, set $D_\kappa:=\diam(\M_\kappa)$, i.e.
\begin{align*}
D_\kappa=\left\{
\begin{array}{ll}
\infty&\quad\text{ is }\kappa\le 0,\\
\frac{\pi}{\sqrt\kappa}&\quad\text{ if }\kappa>0.
\end{array}
\right.
\end{align*}
We refer to \cite[Chapter I.2]{BH99} for a detailed study of the model spaces $\M_\kappa$.

\bigskip

In order to speak of $\kappa$-upper bound of the sectional curvature in a geodesic metric space $(\Y,\sfd_\Y)$, we shall enforce a metric comparison property to geodesic triangles of $\Y$, the intuition being that they are `thinner' than in $\M_\kappa$. To define them we start by recalling that if $a,b,c\in\Y$ is a triple of points satisfying $\sfd_\Y(a,b)+\sfd_\Y(b,c)+\sfd_\Y(c,a)<2D_\kappa$, then there are points, unique up to isometries of the ambient space and called \emph{comparison points}, $\bar a,\bar b,\bar c\in\M_\kappa$ such that
\[
\sfd_\kappa(\bar a,\bar b)=\sfd_\Y(a,b),\qquad\qquad\sfd_\kappa(\bar b,\bar c)=\sfd_\Y(b,c),\qquad\qquad\sfd_\kappa(\bar c,\bar a)=\sfd_\Y(c,a).
\]
In the case where $\Y$ is geodesics (and this will be always assumed), we refer to $\triangle(a,b,c)$ as the geodesic triangle in $\Y$ consisting in three points $a,b,c$, the \emph{vertices}, and a choice of three corresponding geodesics, the \emph{edges}, linking pairwise the points. By $\triangle^{\kappa}(\bar{a},\bar{b},\bar{c})$ we denote the so built geodesic triangle in $\M_\kappa$, which from now on we call comparison triangle. A point $d\in\Y$ is said to be intermediate between $b,c\in\Y$ provided $\sfd_\Y(b,d)+\sfd_\Y(d,c)=\sfd_\Y(b,c)$ (this means that $d$ lies on a geodesic joining $b$ and $c$). The \emph{comparison point of $d$} is the (unique, once we fix the comparison triangle) point $\bar d\in\M_\kappa$, such that
\[
\sfd_\kappa(\bar d,\bar b)=\sfd_\Y(d,b),\qquad\qquad\sfd_\kappa(\bar d,\bar c)=\sfd_\Y(d,c).
\]
\begin{definition}[$\Cat\kappa$-spaces]\label{cat}
A  metric space $(\Y,\sfd_\Y)$ is called a $\Cat\kappa $-space if it is complete, geodesic and satisfies the following triangle comparison principle: for any $a,b,c\in \Y$ satisfying $\sfd_\Y(a,b)+\sfd_\Y(b,c)+\sfd_\Y(c,a)<2D_\kappa$ and any intermediate point $d$ between $b,c$, denoting by  $\triangle^\kappa(\bar a,\bar b,\bar c)$  the  comparison triangle  and by $\bar d\in\M_\kappa$ the corresponding  comparison point (as said, $\bar a,\bar b,\bar c,\bar d$ are unique up isometries of $\M_\kappa$),  it holds
\begin{equation}
\label{eq:defcat}
\sfd_\Y(a,d)\le \sfd_\kappa(\bar  a,\bar d).
\end{equation}
A  metric space $(\Y,\sfd_\Y)$ is said to be locally $\Cat\kappa$ if it is complete, geodesic and every point in $\Y$ has a neighbourhood which is a $\Cat\kappa$-space with the inherited metric. 
\end{definition}
Notice that balls of radius  $<D_\kappa/2$ in the model space $\M_\kappa$ are convex, i.e. meaning that geodesics with endpoint them lies entirely inside. Hence the  comparison property \eqref{eq:defcat} grants that the same is true on $\Cat\kappa$-spaces (see \cite[Proposition II.1.4.(3)]{BH99} for the rigorous proof of this fact). It is then easy to see that, for the same reasons, $(\Y,\sfd_\Y)$ is locally $\Cat\kappa$  provided every point has a neighbourhood $U$ where the comparison inequality (\ref{eq:defcat}) holds for every triple of points $a,b,c\in U$, where the geodesics connecting the points (and thus the intermediate points) are allowed to exit the neighbourhood $U$.

Let us fix the following notation: if $(\Y,\sfd_\Y)$ is a local $\Cat\kappa$-space, for every $y\in\Y$ we set
\[
\sfr_y:=\sup\big\{r\leq D_\kappa/2\ :\ \bar B_r(y)\ \text{is a $\Cat\kappa$-space}\big\}.
\]
Notice that in particular $B_{\sfr_y}(y)$ is a $\Cat\kappa$-space. The definition trivially grants that $\sfr_y\geq\sfr_z-\sfd(y,z)$ and thus in particular $y\mapsto \sfr_y$ is continuous.

Finally, we remark the important fact which will be exploited in the sequel
\begin{quote}
\label{eq:geocontdep} On $\Cat\kappa$-spaces, geodesics with endpoint at distance $<D_\kappa$ are unique (up to reparametrization) and vary continuously with respect to the endpoints.
\end{quote}
For a quantitative version of this fact, see \cite[Lemma 2.2]{DMGSP18}.
Finally, it will be important to examine the case of global \Cat0-spaces, as they naturally arise as tangent structures of $\Cat\kappa$-spaces (see Theorem \ref{thm:tancat} below) and also because we are going to examine \Cat0-valued maps in Section \ref{Sect4}. Since $\M_0$ is the euclidean plane $\R^2$ equipped with the euclidean norm, for $\Y$ \Cat0 and $a,b,c \in \Y$ as in Definition \ref{cat}, the defining inequality \eqref{eq:defcat} reads
\[
    \sfd_\Y(\gamma_t,a) \le \Vert (1-t)\bar{b}+t\bar{c}-\bar{a}\Vert, 
\]
for every $t\in [0,1]$, where $\gamma_t$ is the constant speed geodesic connecting $b$ to $c$ and $\bar{a},\bar{b},\bar{c} \in \R^2$ are comparison points. By squaring and expanding the right hand side, we easily obtain the condition
\begin{equation}
    \sfd_\Y^2(\gamma_t, a) \le (1-t)\sfd_\Y^2(\gamma_0, a)+ t\sfd_\Y^2(\gamma_1,a) -t(1-t)\sfd_\Y^2(\gamma_0,\gamma_1), \label{eq:cat0def}
\end{equation} 
for every $t \in [0,1]$. Inequality \eqref{eq:cat0def} (which can be equivalently used to define \Cat0-spaces) is to be understood as a synthetic deficit of the curvature of $\Y$, with respect to the euclidean plane $\R^2$ (where it holds with equality). In other  words, it quantifies how much the triangle $\triangle(a,b,c)$ is `thin' compared to $\triangle^0(\bar{a},\bar{b},\bar{c})$ in the euclidean plane. The advantage of \eqref{eq:cat0def} is to be more practical to work with in convex analysis and optimization.

\subsection{Tangent cone} We recall here the notion of tangent cone on a $\Cat\kappa$-space, referring to the above-mentioned bibliography for a much more complete discussion.
 
 We define the tangent cone of a $\Cat\kappa$-space by means of geodesics. Let us start with some considerations valid in a general geodesic space  $\Y$: as we shall see, the construction is valid on this generality, but it will benefit from the $\Cat\kappa$ condition making a suitable calculus possible. 
 
Let $ y \in \Y$, and denote by $\Geo_y\Y$ the space of (constant speed) geodesics emanating from $y$ and defined on some right neighbourhood of $0$. We endow this space with the pseudo-distance $\sfd_y $ defined as following:
\begin{equation}
 \sfd_y (\gamma,\eta ) := \limsup_{t\downarrow 0}\frac{\sfd_{\Y}(\gamma_t,
\eta_t)}{t} \qquad \forall \gamma, \eta \in \Geo_y\Y. \label{eq:dy}
\end{equation}
It is easy to see that $\sfd_y$ naturally induces an equivalence relation on $\Geo_y\Y$, by simply imposing $\gamma \sim \eta$ if $\sfd_y(\gamma,\eta)=0.$ By construction, $\sfd_y$ passes to the quotient $\Geo_y\Y/\sim$ and with (a common) abuse of notation, we still denote $\sfd_y$ the distance on the quotient space. The equivalence class of the geodesic $\gamma$ under this relation will be denoted $\gamma'_0$. In particular this applies to the geodesics $\G_y^z$ defined on $[0,1]$, whose corresponding element in $\Geo_y\Y/\sim$ will be denoted by $(\G_y^z)'_0$. 

\begin{definition}[Tangent cone]
Let $\Y$ be a geodesic space and $y \in \Y$. The \emph{tangent cone} $(\T_y\Y,\sfd_y)$, is the completion of  $(\Geo_y\Y/\sim,\sfd_y)$. Moreover, we call $0_y \in \T_y\Y$, the equivalence class of the steady geodesic at $y$.
\end{definition}
A direct consequence of the local $\Cat\kappa$ condition is that, for every $y \in \Y, \gamma,\eta \in \Geo_y\Y$, the limsup in \eqref{eq:dy} is actually a limit. It will be also useful to notice that
 \begin{equation}
 \text{if $\Y$ is \Cat0, } t\mapsto \frac{\sfd_\Y(\gamma_t,\eta_t)}{t} \text{ is non-decreasing}\qquad \forall \gamma,\eta \in \Geo_y\Y, \label{eq:mondis}
\end{equation}
a property which is directly implied by \eqref{eq:cat0def}. A well known (see e.g.\  \cite[Theorem II-3.19]{BH99}) and useful fact is that  tangent cones at local $\Cat\kappa$ spaces are $\Cat0$ spaces:
\begin{theorem}\label{thm:tancat}
Let $\Y$ be locally $ \Cat\kappa$. Then, for every $y \in \Y$, the tangent cone $(\T_y\Y,\sfd_y)$ is a \Cat0-space.
\end{theorem}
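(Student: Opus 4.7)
The plan is a blow-up argument: the $\Cat 0$ inequality \eqref{eq:cat0def} on $\T_y\Y$ will arise as the rescaled limit of the $\Cat\kappa$ inequality applied to small triangles in $\Y$ based near $y$, using that $\M_\kappa$ is infinitesimally Euclidean. By construction $\T_y\Y$ is complete, so I only need to show it is geodesic and satisfies \eqref{eq:cat0def}; by density of $\Geo_y\Y/\sim$ in $\T_y\Y$ and continuity of both sides of \eqref{eq:cat0def} in their arguments, it suffices to treat points of the form $\gamma'_0$ with $\gamma \in \Geo_y\Y$.

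Pick $a=\alpha'_0,\,v_1=\beta'_0,\,v_2=\sigma'_0\in\Geo_y\Y/\sim$. For small $t>0$, the points $\alpha_t,\beta_t,\sigma_t$ lie in the $\Cat\kappa$-ball $B_{\sfr_y}(y)$, where geodesics are unique, and let $\mu^t\colon [0,1]\to B_{\sfr_y}(y)$ be the geodesic from $\beta_t$ to $\sigma_t$. The $\Cat\kappa$ comparison yields $\sfd_\Y^2(\alpha_t,\mu^t_s)\le \sfd_\kappa^2(\bar\alpha_t,\bar\mu^t_s)$ for every $s\in[0,1]$. Since all three sides of $\triangle(\alpha_t,\beta_t,\sigma_t)$ have length $O(t)$, a Taylor expansion of the spherical/hyperbolic law of cosines in $\M_\kappa$ (whose leading-order behaviour is Euclidean) gives
\begin{equation*}
\sfd_\kappa^2(\bar\alpha_t,\bar\mu^t_s)=(1-s)\sfd_\Y^2(\alpha_t,\beta_t)+s\,\sfd_\Y^2(\alpha_t,\sigma_t)-s(1-s)\sfd_\Y^2(\beta_t,\sigma_t)+O(\kappa t^4).
\end{equation*}
Dividing by $t^2$ and letting $t\downarrow 0$, the three squared distances on the right converge, by \eqref{eq:dy}, to $\sfd_y^2(a,v_1)$, $\sfd_y^2(a,v_2)$, $\sfd_y^2(v_1,v_2)$ respectively, while the error vanishes; this is precisely the right-hand side of \eqref{eq:cat0def}.

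The main obstacle is to identify $\lim_{t\downarrow 0}\sfd_\Y^2(\alpha_t,\mu^t_s)/t^2$ with $\sfd_y^2(a,c(s))$ for some curve $c\colon[0,1]\to\T_y\Y$ joining $v_1$ to $v_2$. To this end, let $\xi^{t,s}\in\Geo_y\Y$ denote the geodesic from $y$ to $\mu^t_s$, reparameterized so that $\xi^{t,s}_t=\mu^t_s$. Then $c^t(s):=(\xi^{t,s})'_0\in\T_y\Y$ has $\sfd_y(0_y,c^t(s))=\sfd_\Y(y,\mu^t_s)/t$, uniformly bounded in $t$; a diagonal extraction on a countable dense set of $s\in[0,1]$ yields a subsequence $t_n\downarrow 0$ along which $c^{t_n}(s)\to c(s)\in\T_y\Y$. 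Applying the rescaled inequality with $a$ replaced in turn by $v_1$ and by $v_2$ shows $\sfd_y(v_1,c(s))\le s\,\sfd_y(v_1,v_2)$ and $\sfd_y(c(s),v_2)\le(1-s)\sfd_y(v_1,v_2)$, forcing $s\mapsto c(s)$ to be a constant-speed geodesic from $v_1$ to $v_2$, whose values are independent of the extraction by the uniqueness of midpoints that \eqref{eq:cat0def} itself provides on the limit. Finally, to match $\sfd_\Y(\alpha_t,\mu^t_s)/t$ with $\sfd_y(a,c^t(s))$ one observes that both quantities can be computed, up to an error $O(\kappa t^2)$, via the cosine law in $\M_\kappa$ applied to the triangle with vertex $y$ and the two emanating geodesics $\alpha$ and $\xi^{t,s}$, so they share the same limit as $t\downarrow 0$. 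Plugging this identification back into the rescaled inequality yields \eqref{eq:cat0def} on $\Geo_y\Y/\sim$, hence on $\T_y\Y$ by density, and in particular $\T_y\Y$ is geodesic and $\Cat 0$.
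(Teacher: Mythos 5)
The paper itself offers no proof of this theorem: it is imported from \cite[Theorem II-3.19]{BH99}, so the comparison is with the standard argument there. Your blow-up strategy is the classical one, and most of the individual estimates are sound: the $O(\kappa t^4)$ Euclidean approximation of the law of cosines in $\M_\kappa$, the use of comparison-angle monotonicity to get $\sfd_y(a,c^t(s))\le \sfd_\Y(\alpha_t,\mu^t_s)/t+o(1)$ (note the inequality only goes one way, but it is the way you need), and the observation that any limit curve $c$ is forced to be a constant-speed geodesic by the two degenerate applications of the rescaled inequality.

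There is, however, a genuine gap at the extraction step. You assert that since $|c^t(s)|_y$ is bounded, a diagonal argument produces $t_n\downarrow 0$ with $c^{t_n}(s)\to c(s)$ in $\T_y\Y$. Bounded sequences in $\T_y\Y$ need not have strongly convergent subsequences: tangent cones of $\Cat\kappa$-spaces are not locally compact in general (take $\Y$ an infinite-dimensional Hilbert space, or the space $L^2(\Omega,\Y_{\bar y})$ to which this very theorem is applied in Section \ref{Sect4}). Nor can you invoke weak sequential compactness from Proposition \ref{prop:weakprop}$(iii)$: that is a property of $\Cat0$-spaces, which is exactly what is being proved, so the argument would be circular. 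Two standard repairs: (1) show directly that $t\mapsto c^t(s)$ is Cauchy as $t\downarrow 0$ by comparing the configurations at scales $t$ and $t'$ via the monotonicity of comparison angles --- this is precisely the content of \eqref{eq:sumexpl} and is how \cite{BH99} builds midpoints in the tangent cone; or (2) bypass the construction of $c$ entirely by verifying on the dense set $\Geo_y\Y/\sim$ a four-point (quadrilateral) comparison inequality equivalent to $\Cat0$, which involves only the six pairwise distances among four points and therefore rescales exactly as in your displayed expansion with no compactness needed; such a condition passes to the completion, and your estimate $\sfd_y(v_i,c^t(1/2))\le\tfrac12\sfd_y(v_1,v_2)+o(1)$ already provides approximate midpoints, which the four-point inequality plus completeness upgrades to genuine midpoints and hence to geodesics. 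As written, your proof is complete only under an additional local compactness hypothesis on $\Y$, which the theorem does not assume and which fails in the paper's main application.
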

We now build a calculus on the tangent cone that resembles the one of Hilbert spaces.
\begin{itemize}
\item[\textopenbullet] \emph{Multiplication by a positive scalar}. Let $\lambda\geq 0$. Then the map sending $t\mapsto \gamma_t$ to   $t\mapsto\gamma_{\lambda t}$ is easily seen to pass to the quotient in $\Geo_y\Y/\sim$ and to be $\lambda$-Lipschitz. Hence it can be extended by continuity to a map defined on $\T_y\Y$,  called multiplication by $\lambda$.
\item[\textopenbullet] \emph{Norm}. $|v|_y:=\sfd_y(v,0)$. 
\item[\textopenbullet] \emph{Scalar product}. $\la v,w\ra_y: = \tfrac12\big[|v|_y^2+|w|_y^2-\sfd_y^2(v,w)\big]$.
\item[\textopenbullet] \emph{Sum}. $v\oplus w:=2 m$, where $m$ is the midpoint of $v,w$ (well-defined because $\T_y\Y$ is a $\Cat0$-space).
\end{itemize}
We report from \cite[Theorem 2.9]{DMGSP18} the following fact:
\begin{equation}
\label{eq:densecone}
\text{for $\cD$ dense in $B_{\sfr_y}(y)$ we have that $\{\alpha(\G_y^w)'_0\colon \alpha \in \Q^+, \ w \in \cD\}$ is dense in $\T_y\Y$}.
\end{equation}
Moreover, we recall the following proposition:
\begin{proposition}[Basic calculus on the tangent cone]\label{prop:hilbertine}
Let $\Y$ be locally $ \Cat\kappa$ and  $y \in \Y$.  Then, the four operations defined above are continuous in their variables. The `sum' and the `scalar product' are also symmetric. Moreover:
\begin{subequations}
\begin{align}
\label{eq:norm}
\sfd_y(\lambda v,\lambda w)&=\lambda \sfd_y(v,w),\\
\label{eq:prhom}
\la\lambda v,w\ra_y&= \la v,\lambda w\ra_y=\lambda \la v,w\ra_y,\\
\label{eq:CS}
|\la v,w\ra_y|&\le |v|_y|w|_y,\\
\label{eq:CSeq}
\la v,w\ra_y&= |v|_y|w|_y\quad\text{ if and only if }\quad |w|_yv=|v|_yw,\\
\label{eq:PI}
\sfd_y^2(v,w)+|v\oplus w|_y^2&\le 2(|v|_y^2+|w|_y^2),\\
\label{eq:concav}
\la v_1\oplus v_2,w\ra_y&\geq \la v_1,w\ra_y+\la v_2,w\ra_y
\end{align}
\end{subequations}	
for any $v,v_1,v_2,w\in \T_y\Y$  and $\lambda\geq 0$.
\end{proposition}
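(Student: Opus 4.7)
The overall approach is to leverage two facts: by Theorem \ref{thm:tancat}, $\T_y\Y$ is itself a $\Cat0$-space, so \eqref{eq:cat0def} is available inside it; and by \eqref{eq:norm}, the dilations $v\mapsto \lambda v$ act as homotheties fixing $0_y$, so $\T_y\Y$ carries the structure of a metric cone. The bulk of the statement then follows from a careful bookkeeping with \eqref{eq:cat0def}, while the unique non-trivial analytic input is the cosine-law identity
\[
\sfd_y^2(\lambda v,w)=\lambda^2|v|_y^2+|w|_y^2-2\lambda\langle v,w\rangle_y\qquad\forall \lambda\ge 0,
\]
which encodes that $\T_y\Y$ is a Euclidean cone over its space of directions.

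Continuity, symmetry and \eqref{eq:norm} come directly from the definitions: the dilation $v\mapsto\lambda v$ is $\lambda$-Lipschitz by construction, $|\cdot|_y=\sfd_y(\cdot,0_y)$ is $1$-Lipschitz, $\langle\cdot,\cdot\rangle_y$ is a polynomial in the continuous quantities $|v|_y,|w|_y,\sfd_y(v,w)$, and $\oplus$ is continuous because midpoints in a $\Cat0$-space depend in a $1$-Lipschitz way on each endpoint. Symmetry of $\langle\cdot,\cdot\rangle_y$ and $\oplus$ is immediate. For \eqref{eq:norm} itself, representing $v=\gamma'_0$, $w=\eta'_0$ by geodesics in $\Y$ one rewrites $\lambda v,\lambda w$ via $t\mapsto\gamma_{\lambda t}$ and $t\mapsto\eta_{\lambda t}$, performs the change of variables $s=\lambda t$ in the defining limit \eqref{eq:dy}, and extends to general points in $\T_y\Y$ by continuity using \eqref{eq:densecone}.

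The estimate \eqref{eq:CS} reduces to the two-sided triangle inequality $\big||v|_y-|w|_y\big|\le \sfd_y(v,w)\le |v|_y+|w|_y$ applied to $\{0_y,v,w\}$, after expanding $\sfd_y^2(v,w)=|v|_y^2+|w|_y^2-2\langle v,w\rangle_y$. Equality $\langle v,w\rangle_y=|v|_y|w|_y$ in \eqref{eq:CSeq} forces $\sfd_y(v,w)=\big||v|_y-|w|_y\big|$, so $0_y,v,w$ lie on a common geodesic of $\T_y\Y$; since by \eqref{eq:norm} the unique geodesic from $0_y$ through any non-zero point $u$ is the ray $s\mapsto s\,u/|u|_y$, one concludes that $w$ is a non-negative multiple of $v$, equivalently $|w|_y v=|v|_y w$. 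For \eqref{eq:PI}, apply \eqref{eq:cat0def} to the midpoint $m$ of the geodesic from $v$ to $w$ with comparison point $a=0_y$, getting $|m|_y^2\le \tfrac12(|v|_y^2+|w|_y^2)-\tfrac14\sfd_y^2(v,w)$, and multiply by $4$ using $|v\oplus w|_y=2|m|_y$ (a special case of \eqref{eq:norm}).

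Scalar homogeneity \eqref{eq:prhom} and concavity \eqref{eq:concav} are the subtle identities. Granted the cosine law above, \eqref{eq:prhom} follows at once by expanding $\sfd_y^2(\lambda v,w)=|\lambda v|_y^2+|w|_y^2-2\langle\lambda v,w\rangle_y$ and using $|\lambda v|_y=\lambda|v|_y$. The cosine law itself is the main technical obstacle, and the plan is to derive it via the Alexandrov angle $\angle(v,w)$, classical in $\Cat\kappa$-geometry: the $\Cat\kappa$-comparison in shrinking triangles at $y\in\Y$ yields both the existence of the limit $\lim_{t\downarrow 0}\sfd_\Y(\gamma_{\lambda t},\eta_t)/t$ and its expression in terms of $|v|_y$, $|w|_y$, and $\cos\angle(v,w)=\langle v,w\rangle_y/(|v|_y|w|_y)$. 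Once the cosine law is in hand, the metric-cone identity $|m|_y^2=\tfrac12(|v_1|_y^2+|v_2|_y^2)-\tfrac14\sfd_y^2(v_1,v_2)$ for the midpoint $m$ of $v_1,v_2$ (equality in \eqref{eq:cat0def} for triangles with a vertex at $0_y$) is immediate, and \eqref{eq:concav} is obtained by writing $v_1\oplus v_2=2m$, invoking \eqref{eq:prhom} to rewrite $\langle 2m,w\rangle_y=2\langle m,w\rangle_y$, then combining the midpoint inequality from \eqref{eq:cat0def} at $m$ with $a=w$ and the metric-cone identity to match all terms.
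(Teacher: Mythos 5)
Your proposal is correct, but it follows a different route from the paper on the one item the paper actually proves in detail, namely \eqref{eq:concav}; for the remaining identities the paper simply appeals to \cite[Proposition 2.11]{DMGSP18}, whereas you supply direct arguments (the reduction of \eqref{eq:CS} to the two-sided triangle inequality for $\{0_y,v,w\}$, the alignment argument for \eqref{eq:CSeq}, and the apex comparison for \eqref{eq:PI} are all fine, and your derivation of \eqref{eq:prhom} from the cosine law is exactly the content of the classical Euclidean-cone theorem behind Theorem \ref{thm:tancat}). The genuine divergence is in \eqref{eq:concav}: you use the \emph{exact} metric-cone identity $|m|_y^2=\tfrac12(|v_1|_y^2+|v_2|_y^2)-\tfrac14\sfd_y^2(v_1,v_2)$, i.e.\ equality in the comparison \eqref{eq:cat0def} for triangles with a vertex at the apex $0_y$, combined with the CAT(0) inequality at $m$ with reference point $w$; this works, and the two error terms $\pm\tfrac14\sfd_y^2(v_1,v_2)$ cancel exactly, but the equality you invoke is not ``immediate'' from the cosine law alone --- it requires knowing that geodesics in the cone develop isometrically onto planar triangles, i.e.\ the full Euclidean-cone structure of $\T_y\Y$. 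The paper avoids this stronger input with a rescaling trick: writing $\la v_1\oplus v_2,w\ra_y=\eps^{-1}\la \eps\, 2m,w\ra_y$ via \eqref{eq:prhom} and using only the CAT(0) \emph{inequality} for the midpoint $\eps m$ of $\eps v_1,\eps v_2$, one picks up an error term $\tfrac\eps2(|v_1|_y^2+|v_2|_y^2)$ that vanishes as $\eps\downarrow0$. So the paper's argument is more economical in its hypotheses (only \eqref{eq:cat0def}, \eqref{eq:norm} and \eqref{eq:prhom}), while yours buys a cleaner one-line computation at the cost of importing the cone-development equality; since that equality is part of the same classical package (\cite[Theorem II.3.19]{BH99}) already cited for \eqref{eq:sumexpl}, your proof is acceptable, but you should state explicitly that the apex equality comes from the development of geodesics in a Euclidean cone rather than from the cosine law by itself.
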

\begin{proof} The continuity of `norm', `scalar product' and `multiplication by a scalar' are obvious by definition, the one of `sum' then follows from the continuity of the midpoint of a geodesic as a function of the extremal points.

Points \eqref{eq:norm}, \eqref{eq:prhom}, \eqref{eq:CS}, \eqref{eq:CSeq}, \eqref{eq:PI} are well known and recalled, e.g., in \cite[Proposition 2.11]{DMGSP18}. The concavity property \eqref{eq:concav} is also well known. A way to prove it is to notice that from \eqref{eq:prhom} and letting $m$ be the midpoint of $v_1,v_2$ we get that
\[
\la v_1\oplus v_2,w\ra_y=2\eps^{-1}\la \eps m,w\ra_y=\eps^{-1}\big(\eps^2|m|^2_y+|w|_y^2-\sfd_y^2(\eps m,w)\big)\qquad\forall \eps>0.
\]
From the fact that $\T_y\Y$ is \Cat0 and the fact that $\eps m$ is the midpoint of $\eps v_1,\eps v_2$ (consequence of \eqref{eq:norm}) we get that $\sfd_y^2(\eps m,w)\leq \frac12\sfd_y^2(\eps v_1,w)+\frac12 \sfd_y^2(\eps v_2,w)$ and plugging this in the above we get
\[
\begin{split}
\la v_1\oplus v_2,w\ra_y&\geq\eps^{-1}\big(\tfrac12\big(|w|_y^2-\sfd_y^2(\eps v_1,w)\big)+\tfrac12\big(|w|_y^2-\sfd_y^2(\eps v_2,w)\big)\big)\\
&=\la v_1,w\ra_y+\la v_2,w\ra_y-\tfrac\eps2(|v_1|^2_y+|v_2|^2_y)\qquad\forall \eps>0
\end{split}
\]
and the conclusion follows letting $\eps\downarrow0$.
\end{proof}
It will also be useful to know that
\begin{equation}
\label{eq:sumexpl}
\alpha(\G_y^z)'_0\oplus\beta(\G_y^w)'_0=\lim_{t\downarrow0}\frac2\eps(\G_y^{m_t})'_0,
\end{equation}
for $z,w \in B_{\sfr_y}(y)\setminus\{y\}$, where $m_t$ is the midpoint of $(\G_y^z)_{\alpha t}$ and  $(\G_y^z)_{\beta t}$, see for instance \cite[II-Theorem 3.19]{BH99} for the simple proof.

We conclude recalling that on $\Cat\kappa$-spaces not only a notion of metric derivative is in place for absolutely continuous curves, but it is possible to speak about right (or left) derivatives in the following sense, as proved in \cite{Lyt04}:
\begin{proposition}[Right derivatives]\label{prop:rlder}
 Let $\Y$ be locally $ \Cat\kappa$ and $(y_t)$ an absolutely continuous curve. Then, for a.e.\ $t$, the tangent vectors $\frac1h(\G_{y_t}^{y_{t+h}})'_0\in \T_{\gamma_t}\Y$ have a limit $y'^+_t$ in $\T_{\gamma_t}\Y$ as $h\downarrow 0$.
\end{proposition}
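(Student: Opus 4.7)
The goal is to show that for a.e.\ $t$, the family $\{v_h := \tfrac{1}{h}(\G_{y_t}^{y_{t+h}})'_0\}_{h>0}\subset \T_{y_t}\Y$ is Cauchy as $h \downarrow 0$; convergence then follows from completeness of the tangent cone. I would fix $t$ so that the metric speed $V := |\dot y_t|$ exists and $t$ is additionally a Lebesgue point of $s \mapsto |\dot y_s|$, i.e.\ $\tfrac{1}{h}\int_t^{t+h}\bigl||\dot y_s|-V\bigr|\,ds \to 0$; both properties hold on a set of full measure in view of Theorem \ref{thm:ms}. Thanks to \eqref{eq:norm}, $|v_h|_{y_t} = \sfd_\Y(y_t, y_{t+h})/h$ converges to $V$ by the very definition of the metric speed, so norms are under control.

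The core estimate is to prove that for every constant $C \geq 1$, if $h_1 \leq h_2 \downarrow 0$ with $h_2/h_1 \leq C$, then $\sfd_{y_t}(v_{h_1}, v_{h_2}) \to 0$. For $h_1, h_2$ small the points $y_t, y_{t+h_1}, y_{t+h_2}$ lie in a $\Cat\kappa$-ball around $y_t$, so I would form their comparison triangle in $\M_\kappa$ with sides $\alpha := \sfd_\Y(y_t, y_{t+h_1})$, $\beta := \sfd_\Y(y_t, y_{t+h_2})$ and $\gamma := \sfd_\Y(y_{t+h_1}, y_{t+h_2})$. The choice of $t$ yields $\alpha = Vh_1 + o(h_1)$, $\beta = V h_2 + o(h_2)$, while combining $\gamma \leq \int_{t+h_1}^{t+h_2}|\dot y_s|\,ds$ with the Lebesgue-point property gives the one-sided bound $\gamma \leq V(h_2 - h_1) + h_2 \eps(h_2)$ with $\eps(h_2) \to 0$. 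For small triangles the comparison angle $\bar\theta$ at $\bar y_t$ in $\M_\kappa$ is governed by the Euclidean law of cosines up to $\kappa$-dependent higher-order corrections; a direct substitution---where the bound $h_2/h_1 \leq C$ is used to absorb the remainders coming from $o(h_2^2)/h_1h_2$-type quantities---then gives $\cos\bar\theta \to 1$, hence $\bar\theta \to 0$. By triangle comparison \eqref{eq:defcat} the Alexandrov angle $\theta$ at $y_t$ between $\G_{y_t}^{y_{t+h_1}}$ and $\G_{y_t}^{y_{t+h_2}}$ is bounded above by $\bar\theta$, and coincides with the angle between $v_{h_1}, v_{h_2}$ in the \Cat0-tangent cone (a standard consequence of the scalar-product construction underlying Proposition \ref{prop:hilbertine}); therefore $\la v_{h_1}, v_{h_2}\ra_{y_t} = |v_{h_1}|_{y_t}\,|v_{h_2}|_{y_t}\,\cos\theta \to V^2$, and the identity $\sfd_{y_t}^2(v_{h_1}, v_{h_2}) = |v_{h_1}|_{y_t}^2 + |v_{h_2}|_{y_t}^2 - 2\la v_{h_1}, v_{h_2}\ra_{y_t}$ delivers $\sfd_{y_t}(v_{h_1}, v_{h_2}) \to 0$.

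To remove the bounded-ratio restriction I would apply the previous step to the dyadic sequence $h^{(n)} := 2^{-n} h_0$, whose consecutive terms have ratio $2$, obtaining a Cauchy sequence with a limit $v \in \T_{y_t}\Y$. For an arbitrary sequence $h_k \downarrow 0$, choose $n(k)$ with $h_k \in [h^{(n(k)+1)}, h^{(n(k))}]$; then $h_k$ and $h^{(n(k)+1)}$ differ by a factor in $[1,2]$, so the bounded-ratio estimate applied to that pair yields $\sfd_{y_t}(v_{h_k}, v_{h^{(n(k)+1)}}) \to 0$ and hence $v_{h_k} \to v$. I expect the most delicate step to be the rigorous comparison-angle analysis when $\kappa \neq 0$: one must carefully track the curvature corrections to the $\M_\kappa$ law of cosines and check that at our infinitesimal scale with bounded ratio they are genuinely of lower order, together with invoking the (standard but nontrivial) identification of Alexandrov angles with the angles computed via the Hilbert-like scalar product on the tangent cone from Proposition \ref{prop:hilbertine}.
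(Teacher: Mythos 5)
The paper itself does not prove this proposition --- it is imported from \cite{Lyt04} --- so your argument has to stand on its own, and unfortunately it does not. The bounded-ratio estimate is essentially fine: at a $t$ where the metric speed $V$ exists and which is a Lebesgue point of $s\mapsto|\dot y_s|$, the comparison computation with $\alpha=Vh_1+o(h_1)$, $\beta=Vh_2+o(h_2)$, $\gamma\le V(h_2-h_1)+o(h_2)$ does give $\sfd_{y_t}(v_{h_1},v_{h_2})\to0$ when $h_2/h_1\le C$ (the $o(h_2^2)/(h_1h_2)$ remainders and the $\kappa$-corrections to the law of cosines are absorbed exactly because the ratio is bounded). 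The collapse happens afterwards: knowing that \emph{consecutive} terms of the dyadic sequence become close does not make the sequence Cauchy --- there is no summability of $\sfd_{y_t}(v_{h^{(n)}},v_{h^{(n+1)}})$ anywhere in your estimates, so the directions may drift indefinitely, exactly as the partial sums of a series with terms tending to $0$ may diverge.

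This is not a repairable technicality, because the full-measure set of times you select is genuinely too large. Take $\Y=\R^2=\C$ (a $\Cat0$ space, where $v_h=(y_{t+h}-y_t)/h$) and $y_s:=y_t+\int_t^s e^{i\theta(r)}\,\d r$ for $s\ge t$, with $\theta(r)=\log\log\log\frac{1}{r-t}$. Then $|\dot y_s|\equiv 1$ a.e., so $t$ is a Lebesgue point of the metric speed; the substitution $r-t=he^{-v}$ shows $v_h=e^{i\log\log\log(1/h)}+o(1)$, so $|v_h|\to1$ and the metric speed exists at $t$; yet the argument of $v_h$ diverges and $v_h$ spirals without converging. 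Both of your hypotheses on $t$ hold, the conclusion fails, and --- consistently with your core estimate --- consecutive dyadic quotients do become close without the sequence converging. Any correct proof must therefore discard a further null set of times using information finer than the scalar $|\dot y_s|$: the route of \cite{Lyt04}, which meshes with the tools already in the paper, is to fix a countable dense set $(z_n)$ in a separable subset containing the curve, exclude the null set where some $t\mapsto\sfd_\Y(y_t,z_n)$ fails to be differentiable, and then combine the convergence of $|v_h|_{y_t}$ with that of the pairings $\la v_h,(\G_{y_t}^{z_n})'_0\ra_{y_t}$ (via the first-variation estimates \eqref{eq:d2variation}), the density \eqref{eq:densecone}, and points (ii), (vi) of Proposition \ref{prop:weakprop} to upgrade weak to strong convergence. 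That is the metric surrogate for \emph{vector-valued} Lebesgue differentiation; your argument only invokes the scalar version, which the example above shows is strictly weaker.
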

For us such concept will be useful in particular in connection with the well known first-order variation of the squared distance:
\begin{proposition}\label{cor:derd2}
Let $\Y$ be a  $\Cat\kappa$-space, $(y_t)$ an absolutely continuous curve and $z\in\Y$. Then:
\[
\frac{\d}{\d t}\tfrac12\sfd_\Y^2(y_t,z)=-\la y_t'^+,(\G_{y_t}^z)'_0\ra_{\gamma_t}\qquad a.e.\ t.
\]
\end{proposition}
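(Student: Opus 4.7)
My plan is to combine the CAT($\kappa$) comparison with the law of cosines in the model space to obtain a one-sided estimate, and then close the argument by applying the same idea to the time-reversed curve. Concretely, I restrict to $t$ in a full-measure set where (i) $y_t'^+$ exists by Proposition \ref{prop:rlder} and similarly the ``left'' analogue $y_t'^- := \lim_{h\downarrow 0}h^{-1}(\G_{y_t}^{y_{t-h}})'_0$ exists (the reversed curve is also AC); (ii) the metric speed $|\dot y_t|$ is defined with $|y_t'^\pm|_{y_t}=|\dot y_t|$; (iii) $f(t):=\tfrac12\sfd_\Y^2(y_t,z)$ is differentiable, which is available because $f$ is AC (composition of an AC curve with a locally Lipschitz map). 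Write $v:=y_t'^+$, $u:=y_t'^-$, $w:=(\G_{y_t}^z)'_0$, and $l_1:=\sfd_\Y(y_t,y_{t+h})$, $l_2:=\sfd_\Y(y_t,z)$, $l_3:=\sfd_\Y(y_{t+h},z)$. For $h>0$ small the geodesic $\sigma^h:=\G_{y_t}^{y_{t+h}}$ is uniquely defined, $v_h:=h^{-1}(\sigma^h)'_0\to v$ in $\T_{y_t}\Y$, and analogously for $u$.

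For the lower bound on $f'(t)$, I build the comparison triangle in $\M_\kappa$ with comparison angle $\bar\alpha_h$ at $\bar y_t$. Applying CAT($\kappa$) to the pair $(\sigma^h_s,(\G_{y_t}^z)_s)$, dividing by $s$, and sending $s\downarrow 0$ (the tangent of $\M_\kappa$ at $\bar y_t$ being Euclidean) yields $\sfd_{y_t}^2((\sigma^h)'_0,w)\le l_1^2+l_2^2-2l_1 l_2\cos\bar\alpha_h$, equivalently $\la(\sigma^h)'_0,w\ra_{y_t}\ge l_1 l_2\cos\bar\alpha_h$ by the definition of the inner product. The $\M_\kappa$ law of cosines gives $l_3^2=l_1^2+l_2^2-2l_1 l_2\cos\bar\alpha_h+o(l_1^2)$, so dividing by $2h$, taking $h\downarrow 0$, and using $l_1/h\to|\dot y_t|$ together with continuity of the inner product, I obtain
\[
f'(t)\ \ge\ -\la v,w\ra_{y_t}.
\]
Applying the very same argument to the reversed curve $\tilde y_s:=y_{t-s}$ (which has $\tilde y_0'^+=u$ and $\tilde f'(0)=-f'(t)$) produces the matching upper bound
\[
f'(t)\ \le\ \la u,w\ra_{y_t}.
\]
Adding these two inequalities gives $\la v,w\ra_{y_t}+\la u,w\ra_{y_t}\ge 0$; conversely, the concavity property \eqref{eq:concav} in the CAT(0) tangent cone yields $\la v,w\ra_{y_t}+\la u,w\ra_{y_t}\le\la v\oplus u,w\ra_{y_t}$, so provided I can show $v\oplus u=0$ (i.e.\ that $v$ and $u$ are antipodal in $\T_{y_t}\Y$), both one-sided inequalities become equalities and the claim follows.

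The main obstacle is thus the antipodality step: at a.e.\ $t$ one needs $|v|_{y_t}=|u|_{y_t}=|\dot y_t|$ (already granted) and $\sfd_{y_t}(v,u)=2|\dot y_t|$. The upper bound $\sfd_{y_t}(v_h,u_h)\le h^{-1}\sfd_\Y(y_{t+h},y_{t-h})$ follows from the CAT(0) monotonicity \eqref{eq:mondis} applied in $\T_{y_t}\Y$, and the matching lower bound reduces to showing $\liminf_{h\downarrow 0}\sfd_\Y(y_{t+h},y_{t-h})/(2h)\ge|\dot y_t|$ at almost every $t$. This is a Lebesgue-differentiation type statement on AC curves in metric spaces which holds by classical metric differentiability theory; one may alternatively invoke directly the pointwise first variation formula for AC curves established in \cite{Lyt04}, to which the author indeed ascribes the ``well-known'' character of the result.
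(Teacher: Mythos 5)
Your two one-sided estimates are sound, and they are exactly the mechanism the paper itself leans on: your lower bound $f'(t)\ge-\la v,w\ra_{y_t}$ is the content of \eqref{eq:d2variation} (which is what gets used in Step 2 of Theorem \ref{thm:rightD}), while for the full equality the paper simply defers to \cite[Propositions 2.17 and 2.20]{DMGSP18}. Your architecture --- matching upper bound from the time-reversed curve, then closing the loop via the concavity \eqref{eq:concav} together with antipodality of $y_t'^+$ and $y_t'^-$ in $\T_{y_t}\Y$ --- is a legitimate and essentially standard route to the result.

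The problem is the antipodality step, and it is a genuine gap rather than a citation issue. You need the lower bound $\sfd_{y_t}(v,u)\ge 2|\dot y_t|$, and you claim it ``reduces to'' $\liminf_{h\downarrow0}\sfd_\Y(y_{t+h},y_{t-h})/(2h)\ge|\dot y_t|$. But the only inequality you have connecting the two quantities is the monotonicity bound $\sfd_{y_t}(v_h,u_h)\le h^{-1}\sfd_\Y(y_{t+h},y_{t-h})$ coming from \eqref{eq:mondis}, which controls the tangent-cone distance from \emph{above} by the chord ratio; a lower bound on the chord ratio therefore yields nothing about $\sfd_{y_t}(v_h,u_h)$. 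The pointwise implication you would need is in fact false under the hypotheses you use: in an $\R$-tree (a $\Cat0$ space), if the branch point $b(h)$ of the tripod spanned by $y_t,y_{t+h},y_{t-h}$ satisfies $0<\sfd_\Y(y_t,b(h))=o(h)$, then $\sfd_\Y(y_{t+h},y_{t-h})=\sfd_\Y(y_t,y_{t+h})+\sfd_\Y(y_t,y_{t-h})-2\sfd_\Y(y_t,b(h))$, so the chord ratio tends to $2|\dot y_t|$, while the two geodesics issuing from $y_t$ share an initial segment, hence represent proportional tangent vectors and $\sfd_{y_t}(v_h,u_h)=\big||v_h|_{y_t}-|u_h|_{y_t}\big|\to0$. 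What you actually need is that for a.e.\ $t$ the Alexandrov angle between $\G_{y_t}^{y_{t+h}}$ and $\G_{y_t}^{y_{t-h}}$ tends to $\pi$; this is Lytchak's a.e.\ differentiability theorem for absolutely continuous curves in spaces with upper curvature bounds (\cite{Lyt04}, and see \cite{DMGSP18}), and its proof is genuinely harder than a Lebesgue-point argument for the metric speed. Your fallback --- ``invoke the pointwise first variation formula'' --- is circular, since that formula is precisely (a reformulation of) the statement to be proven. Either prove the a.e.\ antipodality or cite the precise differentiation theorem; as written, the argument does not close.
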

To prove the above proposition, see e.g. \ \cite[Propositions 2.17 and 2.20]{DMGSP18}, one needs to introduce the notion of angle between geodesics and study its monotonicity properties, its behaviour along absolutely continuous curve and finally its connection with the inner product we introduced. Nevertheless, even if we omit the proof, in the sequel we shall use the following fact (see \cite[Lemma 2.19]{DMGSP18}): let $\Y$ be $\Cat\kappa$, $(y_t)$ be an absolutely continuous curve and $z \in \Y$. Then, for the time $t$ s.t. $\msp$ exists and it is positive, we have
\begin{equation}
\begin{split}
-\la \tfrac1h(\G_{y_t}^{y_{t+h}})'_0,(\G_{y_t}^z)'_0\ra_{y_t} &\le -\sfd_\Y(y_t,z)\frac{\sfd_\Y(y_{t+h},y_t)}h\cos(\angle_{y_t}^\kappa(y_{t+h},z)),\quad\forall h>0\text{ s.t. }y_{t+h}\in B_{\sfr_{y_t}}(y_t)\\
\liminf_{h\downarrow 0}-\cos(\angle_{y_t}^\kappa(y_{t+h},z))&=\liminf_{h\downarrow 0}\frac{\sfd_\Y(y_{t+h},z)-\sfd_\Y(y_t,z)}{h\msp},
\end{split} 
\label{eq:d2variation}
\end{equation}
where $\angle_{y_t}^\kappa(y_{t+h},z)$ is  the angle at $\bar{y}$ in $\M_k$ of the comparison triangle $\triangle^\kappa(\bar{y},\bar{y}_h,\bar{z})$. The first of these is an obvious consequence of the definition of $\angle_{y}^\kappa(z_1,z_2)$ together with the fact that $\kappa\mapsto \angle_{y}^\kappa(z_1,z_2)$, and thus $\kappa\mapsto -\cos(\angle_{y}^\kappa(z_1,z_2))$, is increasing, while the second one follows from the Taylor expansion of $\cos(\angle_{y}^\kappa(z_1,z_2))$ for $\sfd_\Y(y,z_1)$ small (notice that the explicit formula for $\cos(\angle_{y}^\kappa(z_1,z_2))$ in terms of $\sfd_\Y(y,z_1),\sfd_\Y(y,z_2),\sfd_\Y(z_1,z_2)$ can be obtained by the cosine rule).

\subsection{Weak convergence}
In this section, we recall the concept of weak convergence in a \Cat 0-space, highlighting the similarities with weak convergence on a Hilbert setting. 

Still, it is important to underline that although a well-behaved notion of `weakly converging sequence' exists, in \cite{Bac18} it is stressed that the existence of a well-behaved weak topology inducing such convergence is an open challenge. For the goal of this manuscript, here we just recall an operative  definition of weak convergence and its properties.
\bigskip

Let us first clarify the notion of \emph{semiconvexity} on a geodesic metric space.
\begin{definition}[Semiconvex function]
Let $\Y$ be a geodesic space and $\E \colon \Y \rightarrow \R \cup \{+\infty\} $. We say that $\E$ is \emph{$\lambda$-convex},   $\lambda \in \R$, if  for any geodesic $\gamma$ it holds
$$ \E(\gamma_t) \le (1-t)\E(\gamma_0) +t\E(\gamma_1) -\frac{\lambda}{2}t(1-t)\sfd_{\Y}^2(\gamma_0,\gamma_1)\qquad\forall t\in[0,1].$$
If $\lambda=0$, then we simply speak of convex functions. We shall denote by $D(\E)\subset\Y$ the set of $y$'s such that $ \E(y)<\infty$.
\end{definition}

Notice that, if $\Y$ is \Cat0 and $\E\colon\Y\to\R^+\cup\{+\infty\}$ is 2-convex and lower semicontinuous, then it admits a unique minimizer. To see this, we argue as for Proposition \ref{prop:proj} and prove that any minimizing sequence $(y_n)\subset\Y$ is Cauchy: let $I:=\inf \E\geq 0$, $y_{n,m}$ the midpoint of $y_n,y_m$ and notice that 
\[
I\leq \E(y_{n,m})\leq \frac12\big(\E(y_n)+\E(y_m)\big)-\frac14 \sfd^2_\Y(y_n,y_m)\qquad\forall n,m\in\N,
\]
so that rearranging and passing to the limit we get 
\[
\frac14\lims_{n,m\to\infty}\sfd^2_\Y(y_n,y_m)\leq \lims_{n,m\to\infty}\frac12\big(\E(y_n)+\E(y_m)\big)-I=0,
\]
giving the claim. The first example of $2$-convex functional we have encountered is the squared distance from a point in a \Cat0-space, as inequality \eqref{eq:cat0def} suggests. Hence, for $(y_n)\subset\Y$ be a bounded sequence, we can consider the mapping 
\[
\Y \ni y \mapsto \omega(y ; (y_n)) := \limsup_{n} \sfd_{\Y}^2(y,y_n).
\] 
and notice that, as a limsup of a sequence of $2$-convex and locally equiLipschitz functions, it is still $2$-convex and locally Lipschitz. By the above remark, it has a unique minimizer.
\begin{definition}[Asymptotic center and weak convergence]\label{def:weakconv}
Let $\Y$ be \Cat0-space and $(y_n)$ be a bounded sequence. We call the minimizer of $\omega(\cdot,(y_n))$ the \emph{asymptotic center} of $(y_n)$.

We say that a sequence $(y_n) \subset \Y$ \emph{weakly converges} to $y$, and write  $y_n \weakto y$, if  $y$ is the asymptotic center of every subsequence $(y_{n_k})$ of $(y_n)$.
\end{definition}
In analogy with the Hilbert setting, we shall sometimes say that $(y_n)$ converges strongly to $y$ if $\sfd_\Y(y_n,y)\to 0$. The main properties of weak convergence are collected in the following statement:
\begin{proposition}\label{prop:weakprop}
Let $\Y$ be a \Cat0-space. Then, the following holds:
\begin{itemize}
\item[i)] If $(y_n)$ converges to $y$ strongly, then it converges weakly.
\item[ii)] $y_n\to y$ if and only if $y_n\weakto  y$ and for some $z\in\Y$ we have $\sfd_\Y(y_n,z)\to \sfd_\Y(y,z)$.
\item[iii)] Any bounded sequence admits a weakly converging subsequence.
\item[iv)] If $C\subset \Y$ is convex and closed, then it is  sequentially weakly closed.
\item[v)] If $\E\colon\Y\to\R\cup\{+\infty\}$ is a convex and lower semicontinuous function, then it is sequentially weakly lower semicontinuous.
\end{itemize}
Moreover, at the tangent cone $ \T_y\Y$ at $y\in\Y$ (which is also a \Cat0-space by Theorem \ref{thm:tancat}) we also have
\begin{itemize}
\item[vi)] Let $(v_n),(w_n)\subset \T_y\Y$ be such that $v_n\to v$ and $w_n\weakto w$ for some $v,w\in \T_y\Y$. Then $\lims_{n\to\infty}\la v_n,w_n\ra \leq\la v,w\ra$.
\end{itemize}
\end{proposition}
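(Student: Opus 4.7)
My plan is to treat the six items in order, leveraging a chain of reductions. Item (i) is immediate from the definition: strong convergence $y_n \to y$ forces $\omega(\cdot;(y_{n_k})) = \sfd_\Y^2(\cdot, y)$ on every subsequence, a function uniquely minimized at $y$. The forward direction of (ii) follows from (i) together with continuity of the distance. For the nontrivial converse, the plan is to apply the \Cat0-defining inequality \eqref{eq:cat0def} along the geodesic $\gamma$ from $y$ to $z$, with $y_n$ as the free vertex:
\[
\sfd_\Y^2(\gamma_t, y_n) \leq (1-t)\sfd_\Y^2(y, y_n) + t\sfd_\Y^2(z, y_n) - t(1-t)\sfd_\Y^2(y, z).
\]
Taking $\limsup_n$ and inserting the hypothesis $\sfd_\Y(y_n, z) \to \sfd_\Y(y, z)$ yields $\omega(\gamma_t;(y_n)) \leq (1-t)\omega(y;(y_n)) + t^2\sfd_\Y^2(y, z)$; combining with $\omega(y;(y_n)) \leq \omega(\gamma_t;(y_n))$ (the asymptotic-center property) and rearranging gives $\omega(y;(y_n)) \leq t\sfd_\Y^2(y,z)$ for every $t \in (0,1]$, and letting $t \downarrow 0$ forces strong convergence.

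Item (iii) is the principal technical obstacle. From an arbitrary bounded sequence one must extract a subsequence whose asymptotic center is \emph{stable} under further subsequencing. I would follow the Jost--Ba\v{c}\'{a}k diagonal construction: relying on the $2$-convexity of $w \mapsto \omega(w;(y_{n_k}))$ (implied by the \Cat0 inequality applied at each $y_{n_k}$), one first extracts a subsequence approaching the infimum $\inf_{(y_{n_k})} \inf_w \omega(w;(y_{n_k}))^{1/2}$ over all subsequences, and then argues via the CN/midpoint inequality that any two further subsequences must share the same asymptotic center --- otherwise the midpoint of their centers would produce a subsequence with strictly smaller asymptotic radius, contradicting minimality.

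For (iv), the key ingredient is that in a \Cat0-space the metric projection $\pi_C$ onto a closed convex subset $C$ is well-defined and $1$-Lipschitz, a standard consequence of 2-convexity of $\sfd_\Y^2(\cdot, w)$ paralleling the argument sketched above Definition \ref{def:weakconv}. For $(y_n) \subset C$ with $y_n \weakto y$, setting $p := \pi_C(y)$ and noting $\pi_C(y_n) = y_n$ produces $\sfd_\Y(p, y_n) \leq \sfd_\Y(y, y_n)$, hence $\omega(p;(y_n)) \leq \omega(y;(y_n))$; uniqueness of the asymptotic center then forces $p = y \in C$, and the same reasoning applies to every subsequence. Item (v) follows by applying (iv) to the sublevel sets $\{\E \leq \alpha\}$, which are convex and closed.

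Finally, for (vi) the plan is to expand the inner product: continuity of the norm gives $|v_n|_y \to |v|_y$, while $(w_n)$ is bounded (every weakly convergent sequence is) and $|\sfd_y(v_n, w_n) - \sfd_y(v, w_n)| \leq \sfd_y(v_n, v) \to 0$; squaring and using boundedness yields $\sfd_y^2(v_n, w_n) - \sfd_y^2(v, w_n) \to 0$ uniformly, hence $\limsup_n \la v_n, w_n\ra_y = \limsup_n \la v, w_n\ra_y$. It therefore suffices to establish that $w \mapsto \la v, w\ra_y$ is weakly upper semicontinuous on $\T_y\Y$. This map is continuous by Proposition \ref{prop:hilbertine}, and combining the symmetric version of \eqref{eq:concav} with the homogeneity \eqref{eq:prhom} shows it is midpoint-concave; continuity then promotes this to concavity along geodesics. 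Its negative is thus a convex continuous function on the \Cat0-space $\T_y\Y$, to which (v) applies to deliver the desired weak upper semicontinuity.
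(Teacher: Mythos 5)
Your proposal is correct, but it is considerably more self-contained than the paper's proof, which disposes of items (ii), (iii), (iv) by citing \cite[Propositions 3.1.6, 3.1.2 and 3.2.1]{Bac14} and only writes out (i), (v) and (vi). Your direct argument for (ii) -- applying \eqref{eq:cat0def} along the geodesic from $y$ to $z$ with $y_n$ as free vertex to get $\omega(y;(y_n))\le t\,\sfd_\Y^2(y,z)$ for every $t\in(0,1]$ -- and your argument for (iv) via the $1$-Lipschitz projection $\Pr_C$ (a fact the paper states but explicitly declines to use) are both valid and more informative than a citation. The genuine divergence is in (vi): after the same reduction to a fixed first argument (legitimate, since weakly convergent sequences are bounded by the very Definition \ref{def:weakconv}), the paper runs a hands-on computation, rescaling $w_n$ by $2\eps$ so as to absorb the $|2\eps w_n|_y^2$ term into an $O(\eps^2)$ error and then invoking weak lower semicontinuity of $\sfd_y^2(v,\cdot)$; you instead note that $w\mapsto\la v,w\ra_y$ is continuous and concave (midpoint concavity from the symmetric form of \eqref{eq:concav} together with \eqref{eq:prhom}, upgraded by continuity) and apply item (v) to its negative. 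This is cleaner and avoids the $\eps$-bookkeeping, at the modest price of the concavity lemma.

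One point to tighten: in your sketch of (iii) the concluding step is not quite as you state it. Given two further subsequences $\tau',\tau''$ with distinct asymptotic centers $c',c''$, the midpoint estimate for $\omega(\cdot;\tau')$ involves $\omega(c'';\tau')$, on which you have no upper control, so the midpoint of $c'$ and $c''$ does not directly yield a subsequence of asymptotic radius below $\rho$. The standard way to finish (after the diagonal extraction has made every further subsequence $\tau$ have the same asymptotic radius $\rho$) is to let $c$ be the asymptotic center of the whole extracted sequence and observe that $\omega(c;\tau)\le\rho^2=\min_w\omega(w;\tau)$, so $c$ minimizes $\omega(\cdot;\tau)$ and coincides with $c_\tau$ by the uniqueness of minimizers of $2$-convex functions (this is where the midpoint inequality actually enters). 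Since the paper itself only cites \cite{Bac14} here, this is a defect of the sketch rather than of the strategy, but as written the step would not go through.
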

\begin{proof} $(i)$ is obvious, as a strong limit is  trivially the asymptotic center of the full sequence. For $(ii),(iii),(iv)$ see  {\cite[Proposition 3.1.6]{Bac14}}, {\cite[Proposition 3.1.2]{Bac14}} and {\cite[Proposition 3.2.1]{Bac14}} respectively. $(v)$ follows trivially from $(iv)$ by considering the strongly closed and convex sublevels of $\E$. Finally, for $(vi)$ we let $C:=\sup_n|w_n|_y<\infty$ and notice that for every $\eps>0$ it holds
\[
\begin{split}
2\eps\la v_n,w_n\ra_y=\la v_n,2\eps w_n \ra_y&\leq |v_n|_y^2+|2\eps w|^2_y-\sfd_y^2(v,2\eps w_n)+\big(\sfd_y^2(v,2\eps w_n)-\sfd_y^2(v_n,2\eps w_n)\big)+4\eps^2C^2\\
&\leq |v_n|_y^2+|2\eps w|^2_y-\sfd_y^2(v,2\eps w_n)+4\eps C\sfd_y(v,v_n)(|v|_y+|v_n|_y)+4\eps^2C^2
\end{split}
\]
and that $\eps w_n\weakto \eps w$ (by \eqref{eq:norm}). Sending $n\to \infty$ and using the sequential weak lower semicontinuity of $\sfd_y^2(v,\cdot)$ (consequence of $(v)$) we obtain that
\[
2\eps\lims_{n\to\infty}\la v_n,w_n\ra_y\leq |v|_y^2+|2\eps w|^2_y-\sfd_y^2(v,2\eps w)+4\eps^2C^2=2\eps \la v,w\ra_y+4\eps^2C^2
\]
and the claim follows dividing by $\eps>0$ and letting $\eps\downarrow0$.
\end{proof}

\subsection{Geometric tangent bundle} \label{Sect2.geo}

In this section we briefly recall some concepts from \cite{DMGSP18} about the construction of the Geometric Tangent Bundle $\T_G\Y$ of a given \emph{separable} local $\Cat\kappa$-space $\Y$.  From now on, $\cB(\Y)$ is the Borel $\sigma$-algebra on $\Y$. As a set, the space $\T_G\Y$ is defined as
\[
\T_G\Y:=\big\{(y,v)\colon y\in\Y,\ v\in\T_y\Y\big\}.
\]
Such set is equipped with a $\sigma$-algebra $\cB(\T_G\Y)$, called Borel $\sigma$-algebra (with a slight abuse of terminology, because   there is no topology inducing it), defined as the smallest $\sigma$-algebra such that the following maps are measurable:
\begin{itemize}
\item[i)]  the canonical projection $\pi_\Y\colon\T_G\Y\to \Y$
\item[ii)] the maps $\pi_{\Y}^{-1}(B_{r_{\bar{y}}}\bar{y})\ni(y,v)\mapsto\la v,(\G^z_y)'_0\ra_y\in\R$ for every $\bar{y}\in \Y, z \in B_{r_{\bar{y}}}(\bar{y})$. 
\end{itemize}
It turns out that $\cB(\T_G\Y)$ is countably generated and that, rather than asking $(ii)$ for every $z\in\Y$, one can require it only for a dense set of points (notice that in the axiomatization chosen in \cite{DMGSP18} one speaks about the differential of the distance function rather than of scalar product with vectors of the form $(\G^z_y)'_0$, but the two approaches are actually trivially equivalent thanks to the explicit expression of the differential of the distance in terms of such scalar product which is hidden in Proposition \ref{cor:derd2}). We also recall that
\begin{equation}
\label{eq:normbor}
\text{the map $\T_G\Y\ni (y,v)\mapsto |v|_y\in\R$ is Borel.}
\end{equation}

A \emph{section} of $\T_G\Y$ is a map ${\sf s}\colon \Y\to\T_G\Y$ such that ${\sf s}_y\in \T_y\Y$ for every $\Y$. A section is said Borel if it is measurable w.r.t.\ $\cB(\Y)$ and $\cB(\T_G\Y)$. Among the various sections, \emph{simple} ones play a special role, similar to the one played by finite-ranged functions in the theory of Bochner integration: ${\sf s}$ is a simple section provided there are $(y_n)\subset\Y$, $(\alpha_n)\subset\R^+$ and $(E_n)$ Borel partition of $\Y$ such that $y_n \in B_{\sfr_y}(y)$ for every $y \in E_n$ and ${\sf s}\restr{E_n}=\alpha_n(\G_\cdot^{y_n})'_0$. If this is the case we write ${\sf s}=\sum_n\nchi_{E_n}\alpha_n(\G_\cdot^{y_n})'_0$, although the `sum' here is purely formal. The following basic result - obtained in \cite{DMGSP18} - will be  useful, we report the proof  for completeness:
\begin{proposition}\label{prop:sb}
Let $\Y$ be  separable and locally $\Cat\kappa$. Then, simple sections of $\T_G\Y$ as defined above are Borel.
\end{proposition}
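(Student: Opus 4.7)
The plan is to verify measurability of ${\sf s}$ directly against the generators of $\cB(\T_G\Y)$ recalled before the statement. That $\sigma$-algebra is generated by $\pi_\Y$ together with the scalar-product maps $(y,v)\mapsto\la v,(\G^z_y)'_0\ra_y$ indexed by pairs $(\bar y,z)$ with $z\in B_{\sfr_{\bar y}}(\bar y)$, so two things need to be checked. First, $\pi_\Y\circ{\sf s}=\mathrm{id}_\Y$ is trivially Borel. Second, for every such pair $(\bar y,z)$ the real-valued function
\[
B_{\sfr_{\bar y}}(\bar y)\ni y\mapsto \la {\sf s}_y,(\G^z_y)'_0\ra_y
\]
must be shown to be Borel.

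To handle this second point, I would write ${\sf s}=\sum_n\nchi_{E_n}\alpha_n(\G_\cdot^{y_n})'_0$; since $(E_n)$ is a Borel partition of $\Y$, it is enough to verify, for each $n$, that
\[
E_n\cap B_{\sfr_{\bar y}}(\bar y)\ni y\mapsto \la (\G_y^{y_n})'_0,(\G_y^z)'_0\ra_y
\]
is Borel, the multiplicative constants $\alpha_n$ being immaterial. The condition $y_n\in B_{\sfr_y}(y)$ built into the definition of a simple section (and the restriction $z\in B_{\sfr_{\bar y}}(\bar y)$ in the generators of $\cB(\T_G\Y)$) guarantees that all geodesics in play are well defined and unique.

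By the polarization identity inherent to the definition of $\la\cdot,\cdot\ra_y$, the expression above equals
\[
\tfrac12\Big(\sfd_\Y^2(y,y_n)+\sfd_\Y^2(y,z)-\sfd_y^2\big((\G_y^{y_n})'_0,(\G_y^z)'_0\big)\Big),
\]
and the first two summands are manifestly continuous in $y$. For the third, the local $\Cat\kappa$ hypothesis (noted right after \eqref{eq:dy}) upgrades the limsup in the definition of $\sfd_y$ to a genuine limit, so
\[
\sfd_y\big((\G_y^{y_n})'_0,(\G_y^z)'_0\big)=\lim_{k\to\infty}k\,\sfd_\Y\big((\G_y^{y_n})_{1/k},(\G_y^z)_{1/k}\big).
\]
The continuous dependence of geodesics on their endpoints (quoted in the excerpt right after Definition~\ref{cat}) makes the term under the limit continuous in $y$ for each fixed $k$, so the limit is a pointwise countable limit of continuous functions, hence Borel. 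This settles the required measurability.

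The main (mild) obstacle is not conceptual but organizational: one must keep all geodesics well defined and unique on each Borel piece $E_n\cap B_{\sfr_{\bar y}}(\bar y)$, which is exactly why the definition of a simple section imposes $y_n\in B_{\sfr_y}(y)$ on $E_n$ and why the scalar-product generators are declared only on $\pi_\Y^{-1}(B_{\sfr_{\bar y}}(\bar y))$. Once this bookkeeping is in place, the polarization identity plus the $\Cat\kappa$-upgrade of the limsup to a limit reduces everything to a standard Borel-measurability-of-limits argument.
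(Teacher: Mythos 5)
Your proposal is correct and follows essentially the same route as the paper: both check measurability directly against the generators of $\cB(\T_G\Y)$, reduce via the (polarization) definition of the scalar product to the Borel regularity of $y\mapsto\sfd_y\big((\G_y^{y_n})'_0,(\G_y^z)'_0\big)$, and conclude by writing this as a pointwise limit of functions that are continuous in $y$ thanks to the continuous dependence of geodesics on their endpoints. The only cosmetic difference is that you spell out $|(\G_y^{y_n})'_0|_y=\sfd_\Y(y,y_n)$ explicitly where the paper invokes \eqref{eq:normbor}, and you keep the partition $(E_n)$ in view where the paper reduces at the outset to a single piece.
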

\begin{proof}
It is sufficient to prove that for any given $\bar{y}\in\Y, z \in B_{r_{\bar{y}}}(\bar{y})$ and $\alpha\in\R^+$ the assignment $B_{r_{\bar{y}}}(\bar{y})\ni y\mapsto {\sf s}_y:= \alpha(\G^z_y)'_0$ is Borel and to this aim, by the very definition of $\cB(\T_G\Y)$, it is sufficient to check that $\pi_\Y\circ {\sf s}\colon\Y\to \Y$ is Borel - which it is, being this map the identity on $\Y$ - and, for any $w\in B_{r_{\bar{y}}}(\bar{y})$, the map $B_{r_{\bar{y}}}(\bar{y})\ni y\mapsto\la {\sf s}_y,(\G^w_y)'_0\ra_y$ is Borel.  Thus fix $w$ and notice that thanks to \eqref{eq:normbor} and to the definition of scalar product on $\T_y\Y$ to conclude it is sufficient to check that $y\mapsto\sfd_y({\sf s}_y,(\G^w_y)'_0)$ is Borel. We have
\[
\sfd_y({\sf s}_y,(\G^w_y)'_0)=\sfd_y(\alpha(\G^z_y)'_0,(\G^w_y)'_0)=\lim_{t\downarrow0} \frac{\sfd_\Y\big((\G^z_y)_{\alpha t},(\G^w_y)_t\big)}t.
\]
From the continuous dependence of geodesics on their endpoints we deduce that $y\mapsto \sfd_\Y\big((\G^z_y)_{\alpha t},(\G^w_y)_t\big)$ is a continuous function for every $t\in (0,1\wedge\alpha^{-1})$. The conclusion then follows from the fact that a pointwise limit of continuous functions is Borel.
\end{proof}
It has been proved in \cite{DMGSP18} that simple sections are dense among Borel ones (see also Lemma \ref{le:denssimp} below in the case $\X=\Y$ and $u={\rm Identity}$). Moreover, the operations on a single tangent space $\T_y\Y$ induce in a natural way operations on the space of Borel sections of $\T_G\Y$: these are Borel regular, as recalled in the next statement (see  \cite[Proposition 3.6]{DMGSP18}  for the proof).
\begin{proposition}\label{prop:bormap}
Let $\Y$ be separable and locally $\Cat\kappa$, ${\sf s},{\sf t}$ Borel sections of $\T_G\Y$ and $f\colon\Y\to\R^+$ Borel. Then, the maps from $\Y$ to $\R$ sending $y$ to $|{\sf s}_y|_y,\sfd_y({\sf s}_y,{\sf t}_y),\la {\sf s}_y,{\sf t}_y\ra_y$ are Borel and the sections $y\mapsto f(y) {\sf s}_y,{\sf s}_y\oplus {\sf t}_y$ are Borel as well.
\end{proposition}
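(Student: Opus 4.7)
I will exploit the minimal generating property of $\cB(\T_G\Y)$: it is the smallest $\sigma$-algebra for which $\pi_\Y$ and all the maps $(y,v)\mapsto \la v,(\G_y^z)'_0\ra_y$ on $\pi_\Y^{-1}(B_{\sfr_{\bar y}}(\bar y))$ (one for each $\bar y\in\Y$ and $z\in B_{\sfr_{\bar y}}(\bar y)$) are Borel. Hence showing that a section $\sigma\colon\Y\to\T_G\Y$ is Borel amounts to checking that $y\mapsto\la \sigma_y,(\G_y^z)'_0\ra_y$ is Borel on $B_{\sfr_{\bar y}}(\bar y)$ for each such $\bar y,z$. Since ${\sf s}$ is a Borel section by hypothesis, composing with the Borel maps of the generating family and with \eqref{eq:normbor} gives at once that $y\mapsto|{\sf s}_y|_y$ and, for any $\bar y$ and $z\in B_{\sfr_{\bar y}}(\bar y)$, that $y\mapsto \la{\sf s}_y,(\G_y^z)'_0\ra_y$ are Borel.

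For the scalar product of two general Borel sections I would use approximation by simple ones: invoking the density of simple sections among Borel ones (announced just before the statement and proved in \cite{DMGSP18}), pick simple ${\sf t}^n=\sum_k\nchi_{F_k^n}\beta_k^n(\G_\cdot^{z_k^n})'_0$ converging pointwise to ${\sf t}$ in $\T_y\Y$. On each $F_k^n$ the map ${\sf s}\mapsto\la{\sf s}_y,{\sf t}^n_y\ra_y$ reduces to $\beta_k^n\la{\sf s}_y,(\G_y^{z_k^n})'_0\ra_y$, which is Borel by the previous paragraph, hence $y\mapsto \la {\sf s}_y,{\sf t}^n_y\ra_y$ is Borel on the whole of $\Y$. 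Continuity of the scalar product (Proposition \ref{prop:hilbertine}) then yields pointwise convergence $\la{\sf s}_y,{\sf t}^n_y\ra_y\to\la{\sf s}_y,{\sf t}_y\ra_y$, and a pointwise limit of Borel functions is Borel. The polarization identity $\sfd_y^2({\sf s}_y,{\sf t}_y)=|{\sf s}_y|_y^2+|{\sf t}_y|_y^2-2\la{\sf s}_y,{\sf t}_y\ra_y$ then handles $\sfd_y({\sf s}_y,{\sf t}_y)$. For $y\mapsto f(y){\sf s}_y$, homogeneity \eqref{eq:prhom} gives $\la f(y){\sf s}_y,(\G_y^z)'_0\ra_y=f(y)\la{\sf s}_y,(\G_y^z)'_0\ra_y$, a product of Borel functions, so the first paragraph applies and the section is Borel.

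The main obstacle is the sum ${\sf s}_y\oplus{\sf t}_y$: unlike the Hilbertian case, the scalar product is not linear in the CAT(0)-vector (only the concavity inequality \eqref{eq:concav} is available), so $\la v\oplus w,u\ra_y$ cannot be split as $\la v,u\ra_y+\la w,u\ra_y$. The strategy is to rely on \eqref{eq:sumexpl}: for simple sections, on a stratum where ${\sf s}_y=\alpha_n(\G_y^{y_n})'_0$ and ${\sf t}_y=\beta_m(\G_y^{z_m})'_0$, one has ${\sf s}_y\oplus{\sf t}_y=\lim_{t\downarrow 0}\tfrac{2}{t}(\G_y^{p_t(y)})'_0$, with $p_t(y)$ the midpoint of $(\G_y^{y_n})_{\alpha_n t}$ and $(\G_y^{z_m})_{\beta_m t}$. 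The continuous dependence of geodesics and midpoints on endpoints (recalled after Definition \ref{cat}) makes $y\mapsto p_t(y)$ continuous; then, via the polarization identity together with $\sfd_y((\G_y^a)'_0,(\G_y^b)'_0)=\lim_{s\downarrow 0}s^{-1}\sfd_\Y((\G_y^a)_s,(\G_y^b)_s)$, the quantity $\la(\G_y^{p_t(y)})'_0,(\G_y^w)'_0\ra_y$ is expressed as a pointwise limit of continuous functions of $y$, hence Borel. Letting $t\downarrow 0$ preserves Borel measurability and settles the case of simple sections. For general Borel sections one approximates by simple ones and invokes the joint continuity of $\oplus$ and of the scalar product (Proposition \ref{prop:hilbertine}) to pass to the limit, concluding the proof.
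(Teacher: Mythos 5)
Your argument is correct. The paper itself does not spell out a proof of Proposition \ref{prop:bormap} but defers to \cite[Proposition 3.6]{DMGSP18}; the route you take — reducing Borel-ness of a section to Borel-ness of its pairings with the generating family of $\cB(\T_G\Y)$, treating simple sections explicitly (via \eqref{eq:prhom} for the scalar product and \eqref{eq:sumexpl} for the sum), and then passing to general Borel sections through the density of simple sections and the fiberwise continuity of the operations from Proposition \ref{prop:hilbertine} — is exactly the strategy the paper uses for the pullback analogue proved after Lemma \ref{le:denssimp}.
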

%

\section{Gradient flows on {\sf CAT}$(\kappa)$-spaces}\label{Sect3}

\subsection{Metric approach}
We recall here the basic definitions and properties of gradient flows on locally  $\Cat\kappa$-spaces. We begin with the definition of (descending) slope $|\partial^-\E|$ of the functional $\E$: for $y\in D(\E)$ we put
\begin{equation}
\label{eq:defsl}
 \vert \partial^-\E\vert(y) := \limsup_{z \rightarrow y} \frac{(\E(y)-\E(z))^+}{\sfd_{\Y}(y,z)},
\end{equation}
and we denote the points where the slope is finite by $D(|\partial^-\E|)\subset D(\E)$. It is easy to prove that for $\lambda$-convex functionals, the slope admits the following `global' formulation (see \cite[Theorem 2.4.9]{AmbrosioGigliSavare08} for the proof):
\begin{lemma}
Let $\Y$ be a geodesic space and $\E \colon \Y \rightarrow \R \cup \{+\infty\}$ be $\lambda$-convex, $\lambda\in\R$, and lower semicontinuous. Then, for every $y \in D(\E)$,
$$ \vert \partial^- \E \vert(y) = \sup _{z \neq y} \left(\frac{\E(y)-\E(z)}{\sfd_{\Y}(y,z)} + \frac{\lambda}{2}\sfd_{\Y}(y,z)\right)^+.  $$
Moreover, $y \mapsto |\partial^-\E| (y)$ is a lower semicontinuous function. \label{lem:slopelsc}
\end{lemma}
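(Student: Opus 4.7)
The plan is to prove the identity and the lower semicontinuity separately, with the latter being a quick consequence of the former.

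For the identity, I would argue the two inequalities independently. The nontrivial direction $\geq$ uses $\lambda$-convexity as follows. Fix $z\neq y$ and let $\gamma\colon [0,1]\to \Y$ be a constant-speed geodesic from $y$ to $z$. The defining $\lambda$-convexity inequality gives
\[
\E(\gamma_t)\leq (1-t)\E(y)+t\E(z)-\frac{\lambda}{2}t(1-t)\sfd_\Y^2(y,z),
\]
which, after rearranging and dividing by $\sfd_\Y(y,\gamma_t)=t\sfd_\Y(y,z)$, yields
\[
\frac{\E(y)-\E(\gamma_t)}{\sfd_\Y(y,\gamma_t)}\geq \frac{\E(y)-\E(z)}{\sfd_\Y(y,z)}+\frac{\lambda}{2}(1-t)\sfd_\Y(y,z).
\]
Taking the positive part, sending $t\downarrow 0$ so that $\gamma_t\to y$, and then the supremum over $z\neq y$ then yields $|\partial^-\E|(y)\geq$ the right-hand side of the claimed identity. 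The reverse inequality $\leq$ is essentially trivial: using the elementary bound $(a)^+\leq (a+b)^++|b|$ with $a=(\E(y)-\E(z))/\sfd_\Y(y,z)$ and $b=(\lambda/2)\sfd_\Y(y,z)$, and passing to the $\limsup$ as $z\to y$, the correction term vanishes because $\sfd_\Y(y,z)\to 0$, so the slope is bounded above by the global supremum.

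For the lower semicontinuity of $y\mapsto |\partial^-\E|(y)$, I would exploit the just-proved identity, which writes $|\partial^-\E|$ as a supremum of functions that are lower semicontinuous away from $z$. Fix $y\in D(\E)$ and a sequence $y_n\to y$. For any $z\neq y$ and any $n$ large enough so that $y_n\neq z$, the global formula gives
\[
|\partial^-\E|(y_n)\geq \left(\frac{\E(y_n)-\E(z)}{\sfd_\Y(y_n,z)}+\frac{\lambda}{2}\sfd_\Y(y_n,z)\right)^+.
\]
Taking $\liminf_n$ on the right, using the lower semicontinuity of $\E$, the continuity of $\sfd_\Y(\cdot,z)$, and the fact that $t\mapsto t^+$ is continuous and nondecreasing, I obtain
\[
\liminf_n |\partial^-\E|(y_n)\geq \left(\frac{\E(y)-\E(z)}{\sfd_\Y(y,z)}+\frac{\lambda}{2}\sfd_\Y(y,z)\right)^+.
\]
Passing to the supremum over $z\neq y$ and applying the identity once more concludes the argument.

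I do not expect any serious obstacle: the essential trick is the $\lambda$-convexity computation that promotes the local slope into a global supremum, and once this is in hand the lower semicontinuity reduces to standard sup-of-lsc reasoning. A minor care point is the handling of sequences with $y_n\notin D(\E)$, but in the convention $|\partial^-\E|(y_n)=+\infty$ at such points the liminf estimate is trivially preserved, so it does not affect the conclusion.
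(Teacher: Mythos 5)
Your proof is correct and is essentially the standard argument: the paper itself omits the proof and defers to \cite[Theorem 2.4.9]{AmbrosioGigliSavare08}, whose proof runs exactly along your lines ($\lambda$-convexity along a geodesic from $y$ to $z$ to upgrade the local $\limsup$ to the global supremum, the elementary bound $(a)^+\le (a+b)^++|b|$ for the easy inequality, and lower semicontinuity as a supremum of lower semicontinuous functions of $y$). Your closing remarks on the convention $|\partial^-\E|=+\infty$ outside $D(\E)$ and on the case $\E(z)=+\infty$ (where the corresponding term in the supremum is just $0$) are exactly the right minor care points.
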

We now come to various equivalent definitions of gradient flows on locally  \Cat{$\kappa$}-spaces. The equivalence between the first two notions below is due to the convexity assumption, while the equivalence of these with the EVI is due to  the geometric properties of $\Cat\kappa$-spaces, and in particular their Hilbert-like structure at small scales.
\begin{theorem}[Gradient flows on locally $\Cat\kappa$-spaces: equivalent definitions]\label{thm:GFdef}
Let $\Y$ be a locally $\Cat\kappa$-space, $\E\colon\Y\to\R\cup\{+\infty\}$ a $\lambda$-convex and lower semicontinuous functional, $\lambda \in \R$, $y\in\Y$ and $(0,\infty)\ni t\mapsto y_t\in\Y$ a locally absolutely continuous curve such that $y_t\to y$ as $t\downarrow0$. Then, the following are equivalent:
\begin{itemize}
\item[$(i)$] \textsc{Energy Dissipation Inequality} We have
\[
-\partial_t\E(y_t)\geq  \frac12|\dot y_t|^2+\frac12|\partial^-\E|^2(y_t)
\]
where the derivative in the left hand side is intended in the sense of distributions.
\item[$(ii)$] \textsc{Sharp dissipation rate} $t\mapsto \E(y_t)$ is locally absolutely continuous and 
\begin{equation}
\lim_{h \downarrow 0}\frac{\E(y_t)-\E(y_{t+h})}{h} =\vert\dot{y}^+_t\vert^2= \vert \partial^-\E\vert^2(y_t) \qquad \text{for every }  t>0, \label{eq:metconv}
\end{equation}
where $\vert\dot{y}^+_t\vert:=\lim_{h\downarrow0} \frac{\sfd_\Y(y_{t+h},y_t)}{h}$ is the right metric speed, which in this case exists for every $t>0$.
\item[$(iii)$] \textsc{Evolution Variational Inequality} For every $z\in\Y$ we have
\begin{equation}
\label{eq:evi}
\frac{\d}{\d t}\frac{\sfd^2_\Y(y_t,z)}2+\E(y_t)+\frac\lambda2\sfd_\Y^2(y_t,z)\leq \E(z)\qquad a.e.\ t>0.
\end{equation}
\end{itemize}
\end{theorem}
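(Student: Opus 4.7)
The plan is to split the proof into two blocks. The equivalence $(i)\Leftrightarrow(ii)$ rests only on $\lambda$-convexity and lower semicontinuity of $\E$ and is essentially classical in the sense of \cite{AmbrosioGigliSavare08}, whereas the equivalence with the EVI formulation (iii) relies crucially on the $\Cat\kappa$ structure at small scales through the first-order variation of the squared distance given by Proposition \ref{cor:derd2}. I will establish the chain $(ii)\Rightarrow(i)\Rightarrow(ii)\Rightarrow(iii)\Rightarrow(ii)$.

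For $(ii)\Rightarrow(i)$ the right-derivative identity in (ii) dominates the distributional derivative of $t\mapsto\E(y_t)$, and the polarisation $ab\le\tfrac12 a^2+\tfrac12 b^2$ then yields (i). For the converse $(i)\Rightarrow(ii)$ the basic input is the strong upper gradient property of the slope along absolutely continuous curves of $\lambda$-convex lower semicontinuous functionals, giving
\[
-\frac{\d}{\d t}\E(y_t)\le |\dot y_t|\cdot|\partial^-\E|(y_t)\le \tfrac12|\dot y_t|^2+\tfrac12|\partial^-\E|^2(y_t).
\]
Comparing this with (i) forces every inequality to be an equality almost everywhere, so $t\mapsto\E(y_t)$ is locally absolutely continuous with $|\dot y_t|=|\partial^-\E|(y_t)$ for a.e.\ $t$. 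To upgrade the pointwise identity to \emph{every} $t>0$ I would use the monotonicity of $t\mapsto e^{\lambda t}|\partial^-\E|(y_t)$, a consequence of $\lambda$-convexity combined with the EDI just proved, together with the lower semicontinuity of the slope from Lemma \ref{lem:slopelsc}; the right metric speed $|\dot y_t^+|$ is then controlled by the same monotone quantity, which forces its existence and its identification with the slope at every $t$.

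For $(iii)\Rightarrow(ii)$, a standard manipulation of \eqref{eq:evi} in which one takes $z=y_s$, exchanges the roles of $s,t$, and integrates in a small neighbourhood of $t$ recovers the sharp dissipation rate; compare \cite[Theorem 4.0.4]{AmbrosioGigliSavare08}.

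The delicate implication, and the one I expect to be the main obstacle, is $(ii)\Rightarrow(iii)$. Fixing $z\in D(\E)$, Proposition \ref{cor:derd2} rewrites the derivative appearing in \eqref{eq:evi} as $-\la y_t'^+,(\G_{y_t}^z)'_0\ra_{y_t}$ for a.e.\ $t$, so that the EVI reduces to the pointwise estimate
\[
\la y_t'^+,(\G_{y_t}^z)'_0\ra_{y_t}\ge\E(y_t)-\E(z)+\tfrac{\lambda}{2}\sfd_\Y^2(y_t,z).
\]
The Cauchy-Schwarz bound \eqref{eq:CS} points in the wrong direction here, so the real content is to show that $y_t'^+$ realises the gradient of $\E$ at $y_t$ and not merely a tangent vector of the correct length. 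The strategy is to combine $\lambda$-convexity of $\E$ along the geodesic $s\mapsto(\G_{y_t}^z)_s$,
\[
\frac{\E((\G_{y_t}^z)_s)-\E(y_t)}{s}\le \E(z)-\E(y_t)-\tfrac{\lambda}{2}(1-s)\sfd_\Y^2(y_t,z),
\]
with the identification from (ii) of $|y_t'^+|=|\partial^-\E|(y_t)$ as the maximal descent rate, and then pass to the limit $s\downarrow 0$ using the angle/cosine expansion recorded in \eqref{eq:d2variation} together with the Hilbert-like inequalities collected in Proposition \ref{prop:hilbertine} on $\T_{y_t}\Y$. This should match the angular rate of descent along the trajectory to that along the geodesic towards $z$, producing the required lower bound on the scalar product and hence the EVI.
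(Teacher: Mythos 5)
Your overall architecture matches the paper's: $(ii)\Rightarrow(i)$ is immediate, $(i)\Rightarrow(ii)$ is the strong-upper-gradient argument of \cite{AmbrosioGigliSavare08}, $(iii)\Rightarrow(ii)$ is the classical doubling/integration trick, and $(ii)\Rightarrow(iii)$ is the genuinely geometric step. For the first three implications your sketches are consistent with the sources the paper itself defers to, so I have no substantive objection there.

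The gap is in $(ii)\Rightarrow(iii)$. After invoking Proposition \ref{cor:derd2} you correctly reduce the EVI to the pointwise bound $\la y_t'^+,(\G_{y_t}^z)'_0\ra_{y_t}\ge \E(y_t)-\E(z)+\frac{\lambda}{2}\sfd_\Y^2(y_t,z)$, and by $\lambda$-convexity along $s\mapsto(\G_{y_t}^z)_s$ this would follow from $-\la y_t'^+,(\G_{y_t}^z)'_0\ra_{y_t}\le\lim_{s\downarrow0}s^{-1}\big(\E((\G_{y_t}^z)_s)-\E(y_t)\big)$, i.e.\ from the inclusion $y_t'^+\in-\partial^-\E(y_t)$ in the sense of \eqref{eq:subder}. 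But that inclusion is exactly the content of Theorem \ref{thm:rightD}, whose proof in the paper \emph{uses} the EVI (Step 2 there integrates \eqref{eq:evi}); completing your sketch along these lines is therefore circular unless you give an independent proof that the direction of motion is aligned with the descent direction towards \emph{every} $z$, and not merely that it has the correct norm $|\partial^-\E|(y_t)$. The sentence ``this should match the angular rate of descent\dots'' is precisely the missing argument: knowing $|\dot y_t^+|=|\partial^-\E|(y_t)$ together with the expansion \eqref{eq:d2variation} does not by itself bound the comparison angle $\angle_{y_t}^\kappa(y_{t+h},z)$ in the needed direction. This is why the paper outsources the implication to \cite[Theorems 4.2 and 3.14]{MS20}, where the alignment is obtained by a separate and substantial argument. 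A lesser omission: the theorem is stated for \emph{locally} $\Cat\kappa$ spaces while \cite{MS20} treats the global case; the only original content of the paper's proof is the localization (the EVI at time $t$ only needs to be tested for $z$ near $y_t$ by the argument used for \eqref{eq:subder}, condition \eqref{eq:metconv} is local by nature, and both conditions restrict well to closed convex $\Cat\kappa$ neighbourhoods), which your proposal does not address at all.
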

\begin{proof}
The fact that $(ii)$ implies $(i)$ is obvious. The converse implication has been proved in \cite{AmbrosioGigliSavare08} as a consequence of the so called \emph{strong upper gradient property} of the slope. The implication $(iii)\rightarrow(ii)$ is proved in \cite{MS20} (the argument in \cite{MS20} has been also reported in \cite{G11}). The fact that on  locally $\Cat\kappa$-spaces $(ii)$ implies $(iii)$ has also been proved in \cite{MS20} (see in particular Theorems 4.2 and 3.14 there). More precisely, in \cite{MS20} only the `global' case of $\Cat\kappa$-spaces has been considered, but the arguments there can be quickly adapted to cover our case by noticing that:
\begin{itemize}
\item[-] arguing as for the proof of \eqref{eq:subder} below, we see that  \eqref{eq:evi} holds at some $t$ if and only if it holds at $t$ for  $z$ varying only in a neighbourhood of $y_t$,
\item[-] property \eqref{eq:metconv} is local by nature,
\item[-] if $B\subset\Y$ is closed, convex and $\Cat\kappa$, then a curve $I\ni t\mapsto y_t\in B$ satisfies $(ii)$ (resp.\ $(iii)$) in $B$ if and only if it satisfies $(ii)$ (resp.\ $(iii)$) in $\Y$.
\end{itemize}
\end{proof}
A curve satisfying any of the equivalent conditions in this last theorem will be called \emph{gradient flow trajectory}. Moreover, we define the \emph{gradient flow map} ${\sf GF}^\E \colon (0,\infty) \times \Y \rightarrow \Y$ via ${\sf GF}_t^\E(y):= y_t$ for every $t \in (0,\infty), y \in \Y$, where, evidently, $y_t$ is the gradient flow trajectory starting at $y$ and associated to the functional $\E$ evaluated at time $t$. Some of their main properties are collected in the following statement:
\begin{theorem}[Gradient flows on locally $\Cat\kappa$-spaces: some basic properties]\label{thm:GF}
Let $\Y$ be a locally $\Cat\kappa$-space, $\E\colon\Y\to\R\cup\{+\infty\}$ a $\lambda$-convex and lower semicontinuous functional. Then, the following holds:
\begin{itemize}
\item[\textopenbullet]  \textsc{Existence} \begin{center}For every $y\in \overline{D(\E)}$ there exists a gradient flow trajectory for $\E$ starting from $y$.\end{center}
\item[\textopenbullet]  \textsc{Uniqueness and $\lambda$-contraction} \begin{center}For any two gradient flow trajectories  $(y_t),(z_t)$ starting from $y,z$ respectively we have
\begin{equation}
\label{eq:contr}
\sfd_\Y(y_t,z_t)\leq e^{-\lambda (t-s)}\sfd_\Y(y_s,z_s)\qquad\forall t\ge s >0
\end{equation}
\end{center}
\item[\textopenbullet]  \textsc{Monotonicity properties}\ For $(y_t)$ gradient flow trajectory for $\E$ starting from $y$       we have that 
         \begin{center}\label{eq:s5}
        $t\mapsto y_t$ is locally Lipschitz in $(0,+\infty)$ with values in $D(|\partial^-\E|)\subset D(\E)$,
        \begin{align}
            &t \mapsto \E(y_t) \text{ is nonincreasing in $ [0,+\infty)$}, \nonumber\\
            &t \mapsto e^{\lambda t}|\partial^-\E|({y_t}) \text{ is nonincreasing in $ [0,+\infty)$}. \label{eq:regul}
        \end{align}
         \end{center}
\end{itemize}
\end{theorem}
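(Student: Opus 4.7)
The plan is to leverage Theorem \ref{thm:GFdef}, which already establishes the equivalence of the three formulations of a gradient flow trajectory: most assertions then follow by choosing the most convenient formulation for each task.

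\textbf{Existence.} First I would construct a trajectory via the De Giorgi minimizing movement scheme. Given $\tau>0$ and $y^\tau_0:=y\in\overline{D(\E)}$, recursively define $y^\tau_n$ as a minimizer of $z\mapsto \E(z)+\frac{\sfd_\Y^2(z,y^\tau_{n-1})}{2\tau}$. In a $\Cat 0$-space, for $\tau$ smaller than a threshold depending on $\lambda$, this functional is $2$-convex and lower semicontinuous, so the argument recalled after Definition \ref{def:weakconv} yields a unique minimizer. Standard a priori bounds (including the classical estimate $\sfd_\Y^2(y^\tau_n,y)\leq 2n\tau(\E(y)-\inf\E)$, modified by $\lambda$) show that the piecewise-geodesic interpolants converge uniformly as $\tau\downarrow 0$ to a locally absolutely continuous curve satisfying the Energy Dissipation Inequality (i). In the globally $\Cat 0$ case this is the Mayer/Jost construction cited in the introduction; for locally $\Cat\kappa$ spaces one performs the construction inside a small ball $B_{\sfr_y}(y)$ where the local $\Cat\kappa$-hypothesis applies, and then patches local flows by concatenation, using continuity of the produced curve and the a priori estimate on displacement to guarantee that each successive piece stays in the $\Cat\kappa$-neighborhood where the scheme is well-posed.

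\textbf{Uniqueness and contraction.} Both are consequences of the EVI formulation (iii). Given two trajectories $(y_t),(z_t)$, the standard tactic is to apply EVI for $(y_t)$ with moving test point $z_s$ and for $(z_s)$ with moving test point $y_t$ and sum; the energy terms cancel along the diagonal $s=t$, leaving the distributional differential inequality
\[
\frac{\d}{\d t}\sfd_\Y^2(y_t,z_t)+2\lambda\sfd_\Y^2(y_t,z_t)\leq 0,
\]
which by Gronwall's lemma yields \eqref{eq:contr}; choosing $y=z$ gives uniqueness. The technical subtlety here is rigorously justifying the moving test point inside EVI, which is classically handled by a doubling-of-variables argument or by time-regularization, and has been carried out in \cite{MS20}.

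\textbf{Monotonicity.} The non-increasing monotonicity of $t\mapsto\E(y_t)$ is immediate from (ii) of Theorem \ref{thm:GFdef} since the right-hand side of \eqref{eq:metconv} is non-negative. For the key monotonicity of $t\mapsto e^{\lambda t}|\partial^-\E|(y_t)$, the plan is to compare $(y_t)$ with its time-translate $(y_{t+h})$ for $h>0$: by uniqueness the latter is the gradient flow trajectory starting from $y_h$, so the already-proven \eqref{eq:contr} yields
\[
\sfd_\Y(y_{t+h},y_t)\leq e^{-\lambda(t-s)}\sfd_\Y(y_{s+h},y_s)\qquad\forall\,0<s\leq t,\ h>0.
\]
Dividing by $h$, letting $h\downarrow 0$, and identifying right metric speeds with slopes via (ii) gives $e^{\lambda t}|\partial^-\E|(y_t)\leq e^{\lambda s}|\partial^-\E|(y_s)$. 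The local Lipschitz property on $(0,\infty)$ with values in $D(|\partial^-\E|)$ then follows: the $L^1_{\rm loc}$-integrability of $|\dot y_t|$ forces $|\dot y_t^+|=|\partial^-\E|(y_t)$ to be finite on a set of positive measure near $0$, and the monotonicity just proven propagates finiteness to every later time and produces local uniform bounds on the slope.

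\emph{Main obstacle.} The bulk of the technical effort lies in the existence step on locally $\Cat\kappa$-spaces: controlling that minimizing-movement iterates and the resulting curve do not leave the local $\Cat\kappa$-neighbourhood in finite time, so that the known global $\Cat 0$ machinery can be imported via patching. Once existence is secured in the EVI form of Theorem \ref{thm:GFdef}, contraction and the slope monotonicity are essentially formal consequences.
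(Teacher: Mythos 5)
Your proposal is correct and follows essentially the same route as the paper, which simply cites \cite{Mayer98}, \cite{Jost98}, \cite{AmbrosioGigliSavare08}, \cite{OP17} and \cite{MS20} for the minimizing-movement existence, the EVI-based contraction/uniqueness, and the localization argument, and derives the monotonicity claims from \eqref{eq:metconv} together with the contraction property exactly as you do (time-translation plus \eqref{eq:contr}). The only difference is that you spell out the standard arguments that the paper delegates to the literature.
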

\begin{proof} In the \Cat0 case, existence of a limit of the so-called minimizing movements scheme in this setting has been proved in \cite{Mayer98} and \cite{Jost98}. The fact that the limit curve obtained in this way satisfies the EVI condition has been proved in  \cite{AmbrosioGigliSavare08}. The contractivity property, also at the level of the discrete scheme, has been proved in  \cite{Mayer98} and \cite{Jost98} (at least in the case $\lambda=0$, the general case can be found e.g.\ in  \cite{AmbrosioGigliSavare08} as a consequence of the EVI condition). Then, uniqueness is directly implied by \eqref{eq:contr} and the last claims are a consequence of \eqref{eq:metconv} and the contraction property.

The  $\Cat\kappa$ case has been treated in \cite{OP17}, at least under some compactness assumptions on the sublevels of the functional. Such compactness assumption has been removed in \cite{MS20}. Finally, the case of locally $\Cat\kappa$ spaces can be dealt with as in the proof of Theorem \ref{thm:GFdef} above.
\end{proof}
Finally, we conclude the section with an \emph{a priori} estimate, a variant of the ones investigated in \cite{MS20}, concerning contraction properties along the gradient flow trajectories at different times. The proof is inspired by the one of \cite[Lemma 2.1.4]{PE13} in the context of CBB-spaces.
\begin{lemma}[A priori estimates] \label{lem:apriori}
Let $\Y$ be locally $\Cat\kappa$ and $\E \colon \Y\rightarrow [0,\infty]$ be a $\lambda$-convex and lower semicontinuous functional, $\lambda \in \R$. Let $y,z \in \Y$ and consider the gradient flow trajectories $(y_t),(z_t)$ associated with $\E$.

Then, for any $t \ge s > 0$, it holds
\begin{equation}
\label{eq:apriorigf}
 \begin{split}
\sfd^2_{\Y}(y_{t},z_{s}) \le  e^{-2\lambda s}\Big(  \sfd^2_\Y(y,z) +& 2(t-s)(\E(z)-\E(y)) \\
& + 2|\partial^- \E|^2(y)\int_0^{t-s}\theta_\lambda(r)\,\d r -\lambda\int_0^{t-s} \sfd_\Y^2(y_r,z)\, \d r\Big),
\end{split}
\end{equation}
where $\theta_\lambda(t):= \int_0^t e^{-2\lambda r}\, \d r$.
\end{lemma}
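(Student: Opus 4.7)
The plan is to first use the $\lambda$-contraction property \eqref{eq:contr} to transport the $e^{-2\lambda s}$ factor outside, reducing the task to an estimate involving only a single gradient flow trajectory starting at $y$, and then to extract that estimate by integrating the EVI \eqref{eq:evi} and correcting the resulting $-\int \E(y_r)\,\d r$ term by means of the sharp dissipation rate \eqref{eq:metconv} and the slope monotonicity \eqref{eq:regul}.

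\smallskip

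For the first (easy) step, I observe that $\tilde y_r := y_{(t-s)+r}$ and $\tilde z_r := z_r$ are both gradient flow trajectories for $\E$, starting at $y_{t-s}$ and $z$ respectively. Applying the contraction \eqref{eq:contr} of Theorem \ref{thm:GF} at time $r = s$ and squaring, I get
\[
\sfd_\Y^2(y_t, z_s) \le e^{-2\lambda s}\,\sfd_\Y^2(y_{t-s}, z).
\]
Setting $\tau := t-s \ge 0$, it thus suffices to prove
\[
\sfd_\Y^2(y_\tau, z) \le \sfd_\Y^2(y,z) + 2\tau\bigl(\E(z)-\E(y)\bigr) + 2|\partial^-\E|^2(y)\int_0^\tau \theta_\lambda(r)\,\d r - \lambda\int_0^\tau \sfd_\Y^2(y_r,z)\,\d r.
\]

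For this second step, I apply the EVI \eqref{eq:evi} at the constant point $z$: since $r\mapsto y_r$ is locally Lipschitz into $\Y$, the map $r\mapsto \tfrac12\sfd_\Y^2(y_r,z)$ is locally absolutely continuous, hence integrating the pointwise a.e.\ inequality from $0$ to $\tau$ gives
\[
\tfrac12 \sfd_\Y^2(y_\tau,z) - \tfrac12 \sfd_\Y^2(y,z) + \tfrac{\lambda}{2}\int_0^\tau \sfd_\Y^2(y_r,z)\,\d r \;\le\; \tau\,\E(z) - \int_0^\tau \E(y_r)\,\d r.
\]
Multiplying by $2$, the target inequality will follow once I show
\[
-2\int_0^\tau \E(y_r)\,\d r \le -2\tau\,\E(y) + 2|\partial^-\E|^2(y)\int_0^\tau \theta_\lambda(r)\,\d r.
\]
To establish this, I combine the sharp dissipation identity \eqref{eq:metconv}, which gives $-\frac{\d}{\d r}\E(y_r) = |\partial^-\E|^2(y_r)$, with the monotonicity \eqref{eq:regul}, which yields $|\partial^-\E|(y_r) \le e^{-\lambda r}|\partial^-\E|(y)$. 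Together these produce $-\frac{\d}{\d r}\E(y_r) \le e^{-2\lambda r}|\partial^-\E|^2(y)$; integrating once on $[0,r]$ gives $\E(y) - \E(y_r) \le \theta_\lambda(r)|\partial^-\E|^2(y)$, and a second integration on $[0,\tau]$ gives exactly the bound above. Plugging everything back and combining with the contraction inequality completes the proof.

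\smallskip

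I do not expect any serious obstacle here: the proof is an integration of known a.e.\ identities/inequalities, and the regularity needed to integrate them (local Lipschitzness of $(y_r)$ and local absolute continuity of $r\mapsto \E(y_r)$) is granted by Theorems \ref{thm:GFdef}–\ref{thm:GF}. The only place where one has to be a little careful is the time shift in the contraction step, where one must check that a shifted gradient flow trajectory is itself a gradient flow trajectory — this is immediate from the autonomous character of the defining conditions in Theorem \ref{thm:GFdef}.
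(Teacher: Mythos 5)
Your proof is correct and follows essentially the same route as the paper's: both arguments integrate the EVI \eqref{eq:evi} over $[0,t-s]$, control $\E(y)-\E(y_r)$ by $|\partial^-\E|^2(y)\,\theta_\lambda(r)$ via the sharp dissipation rate \eqref{eq:metconv} combined with the slope monotonicity \eqref{eq:regul}, and peel off the factor $e^{-2\lambda s}$ with the contraction property \eqref{eq:contr}. The only difference is cosmetic — you apply the contraction step first and the paper applies it last.
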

\begin{proof}
We start fixing $t>0$. First, we notice that, in light of $(ii)$ of Theorem \ref{thm:GFdef} and the basic properties in Theorem \ref{thm:GF}, we have for any $r >0$ (and not a.e. $r$),
\[ 
-\E(y_r) + \E(y) = \int_0^r  e^{-2\lambda q}e^{+2\lambda q}|\partial^-\E|^2(y_q)\, \d q \le|\partial^- \E|^2(y)\theta_\lambda(r). 
\]
Thus, we can integrate from $0$ to $t$ the EVI condition \eqref{eq:evi} to get
\[
\begin{split}
\frac12(\sfd_\Y^2(y_t,z) - \sfd_\Y^2(y,z) ) &\le \int_0^t \E(z)-\E(y_r) -\frac{\lambda}{2}\sfd_\Y^2(y_r,z)\, \d r  \\
&\le  t(\E(z)-\E(y))+ |\partial^- \E|^2(y)\int_0^t \theta_\lambda(r)\, \d r -\frac{\lambda}{2}\int_0^t \sfd_\Y^2(y_r,z)\, \d r.
\end{split}
\]
Finally, for general $t \ge s >0 $, we can reduce to above case by appealing to property \eqref{eq:contr}. 
\end{proof}

\subsection{The object $-\partial^-\E(y)$}

In this section we introduce the key object  $-\partial^-\E(y)$ of this manuscript associated to a semiconvex and lower semicontinuous functional $\E$ over a local $\Cat\kappa$ space. As the notation suggests, and as will be clear from Definition \ref{def:md}, for functionals on Hilbert spaces this corresponds to $\{-v:v\in\partial^-\E(y)\}$.

We start recalling the following well known fact:
\begin{proposition}[Metric projection]\label{prop:proj}
Let $\Y$ be a $\Cat0$-space and  $C \subset \Y$ be a closed convex subset. Then, for every $y\in\Y$, there is a unique $ \Pr_{C}(y) \in C$, called \emph{metric projection} of $y$ onto $C$, such that $\sfd_{\Y}(y,\Pr_{C}(y)) = \inf_{C}\sfd_{\Y}(y,\cdot)$.

\end{proposition}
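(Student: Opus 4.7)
The plan is to prove existence by showing any minimizing sequence is Cauchy (exploiting the completeness of $\Y$), and uniqueness by the same $\Cat0$ quadrilateral-type inequality applied to a hypothetical pair of minimizers. The key analytic tool is the defining inequality \eqref{eq:cat0def} evaluated at $t=1/2$: for any $b,c\in\Y$ and any $a\in\Y$, if $m$ is the midpoint of $b,c$ then
\[
\sfd_\Y^2(m,a)\leq \tfrac12\sfd_\Y^2(b,a)+\tfrac12\sfd_\Y^2(c,a)-\tfrac14\sfd_\Y^2(b,c).
\]
This is the metric analogue of the parallelogram identity and is the real engine of the proof.

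For \textbf{existence}, set $d:=\inf_{C}\sfd_\Y(y,\cdot)$ (finite because $C$ is nonempty, say containing some fixed point). Take a minimizing sequence $(y_n)\subset C$, i.e.\ $\sfd_\Y(y,y_n)\to d$. Let $m_{n,k}\in C$ be the midpoint of $y_n,y_k$, which lies in $C$ because $C$ is convex and $\Y$ has unique geodesics (\Cat0 spaces are uniquely geodesic). Using the inequality above with $a=y$, $b=y_n$, $c=y_k$, and the bound $\sfd_\Y^2(m_{n,k},y)\geq d^2$, we get
\[
\tfrac14\sfd_\Y^2(y_n,y_k)\leq \tfrac12\sfd_\Y^2(y_n,y)+\tfrac12\sfd_\Y^2(y_k,y)-d^2\xrightarrow[n,k\to\infty]{}\tfrac12 d^2+\tfrac12 d^2-d^2=0,
\]
so $(y_n)$ is Cauchy. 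Since $\Y$ is complete (\Cat\kappa-spaces are, by Definition \ref{cat}) and $C$ is closed, $(y_n)$ converges to some $\Pr_C(y)\in C$, and continuity of $\sfd_\Y(y,\cdot)$ gives $\sfd_\Y(y,\Pr_C(y))=d$.

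For \textbf{uniqueness}, suppose $p_1,p_2\in C$ both achieve the infimum $d$. Applying the same inequality to their midpoint $m\in C$ with $a=y$ yields
\[
d^2\leq\sfd_\Y^2(m,y)\leq \tfrac12 d^2+\tfrac12 d^2-\tfrac14\sfd_\Y^2(p_1,p_2)=d^2-\tfrac14\sfd_\Y^2(p_1,p_2),
\]
forcing $\sfd_\Y(p_1,p_2)=0$, i.e.\ $p_1=p_2$.

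No step looks seriously obstructive; the only delicate point is making sure that the midpoint of $y_n,y_k$ (and of $p_1,p_2$) lies in $C$, which requires convexity plus the existence of a genuine geodesic in the ambient space. Both are available in $\Cat0$-spaces by the unique-geodesic property recalled in the excerpt, so the argument goes through cleanly.
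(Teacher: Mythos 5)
Your proof is correct and follows essentially the same route as the paper's: both hinge on the midpoint/parallelogram inequality \eqref{eq:cat0def} at $t=1/2$ to show that any minimizing sequence is Cauchy, with completeness and closedness of $C$ giving existence. The only cosmetic difference is that you run the same inequality a second time on a hypothetical pair of minimizers to get uniqueness, whereas the paper obtains uniqueness implicitly from the Cauchy property of arbitrary minimizing sequences.
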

\begin{proof} Since the function to be minimized is continuous and $C$ closed, it is sufficient to prove that any minimizing sequence $(c_n)$ for  $I:= \inf_{c \in C} \sfd_{\Y}^2(c,y)$ (which is equivalent to be minimizing for $\inf_C\sfd_\Y(y,\cdot)$) is Cauchy. Fix such sequence  and, for every $n,m \in \N$, let  $c_{m,n}$ be the mid-point between $c_n$ and $c_m$. Observe that since $C$ is convex, $c_{n,m}$ belongs to $C$ and thus  is a competitor for the minimization problem. Condition (\ref{eq:cat0def}) therefore implies
$$ I \le \sfd_{\Y}^2(c_{n,m},y) \le \frac{1}{2}\sfd_{\Y}^2(c_n,y) + \frac{1}{2}\sfd_{\Y}^2(c_m,y) - \frac{1}{4}\sfd_{\Y}^2(c_n,c_m),$$
for every $n,m \in \N$.
Rearranging terms, and taking the limsup as $n,m$ go to infinity we observe
$$\limsup_{n,m \rightarrow +\infty} \frac{1}{4}\sfd_{\Y}^2(c_n,c_m) \le \limsup_{n,m \rightarrow +\infty} \frac{1}{2}\sfd_{\Y}^2(c_n,y) + \frac{1}{2}\sfd_{\Y}^2(c_m,y) -I =0, $$
i.e.\ $(c_n)$ is Cauchy, as desired.
\end{proof}
We remark that the metric projection can be also shown to be $1$-Lipschitz and to satisfy a `Pythagoras' inequality' (see \cite[Theorem 2.1.12]{Bac14}), but we will not make use of this fact.bFinally, we are ready to give an effective definition of (opposite of the) subdifferential of $\E$ as a subset of the tangent cone.
\begin{definition}[Minus-subdifferential]\label{def:md}
Let $\Y$ be locally $\Cat\kappa$, $\E \colon \Y \rightarrow \R \cup \{+\infty\}$ be a $\lambda$-convex and lower semicontinuous functional, $\lambda \in \R$, and $y \in D(\E)$. We define the \emph{minus-subdifferential} of $\E$ at $y$, denoted by $-\partial^-\E(y)$, as the collection of $ v \in \T_y\Y$ satisfying the subdifferential inequality 
\[
\E(y) - \la v,\gamma'_0\ra_y+\frac{\lambda}{2}\sfd_\Y^2(y,z) \le \E(z),
\]
for every $z \in \Y,$ and some geodesic $\gamma$ from $y$ to $z$. Moreover, by $D(-\partial^-\E)$, we denote the collection of $y \in \Y$ for which $-\partial^-\E(y)\neq \emptyset$.
\label{def:subdiff}
\end{definition}
Notice that $v\in-\partial^-\E(y)$ if and only if
\begin{equation}
\label{eq:subder}
- \la v,\gamma'_0\ra_y\leq \lim_{t\downarrow0}\frac{\E(\gamma_t)-\E(y)}t\qquad\forall z\in\Y,\ \text{for some geodesic } \gamma \text{ from }y \text{ to }z.
\end{equation}
so that in particular the definition of $-\partial^-\E(y)$ does not depend on $\lambda$. Indeed the `if' is obvious by $\lambda$-convexity while for the `only if' we apply the defining inequality with $z_t:=\gamma_t$ in place of $z$ and, for $t$ small enough, rearrange to get
\[
-\la v,(\G_y^{z_t})'_0\ra_y+\frac{\lambda}{2}\sfd_\Y^2(y,z_t)\leq \E(z_t)-\E(y)
\]
so that the conclusion follows noticing that $\sfd_\Y^2(y,z_t)=t^2\sfd_\Y^2(y,z)$, $(\G_y^{z_t})'_0=t\gamma'_0$ (because for $t\ll1$ the geodesic from $y$ to $z_t$ in unique), then dividing by $t$ and letting $t\downarrow0$. The same arguments also show that both in Definition \ref{def:md} and in \eqref{eq:subder} we can take $\gamma$ to be \emph{any} geodesic from $y$ to $z$.

It is also worth to point out that
\begin{equation}
\label{eq:sub0}
\begin{split}
\text{For $\E$ convex and lower semicontinuous we have that:}\\
\text{$x$ is a minimum point for $\E$ if and only if $0\in-\partial^-\E(x)$.}
\end{split}
\end{equation}
The proof of this fact being obvious.

\begin{remark}\label{re:mm}{\rm It would certainly be possible to define the analogous notion of subdifferential $\partial^-\E$ by replacing $ - \la v,\gamma'_0\ra_y$ with $\la v,\gamma'_0\ra_y$ in the defining formula, however, since the tangent cone is only a cone and not a space, there is no obvious relation between the two definitions.

For our purposes, $-\partial^-\E$ is the correct object to work with because, as discussed in the introduction, we aim at showing the existence of the Laplacian of a \Cat0-valued Sobolev map by looking at the gradient flow of the Korevaar-Schoen energy $\E^\sfKS$, thus we notice on one hand that, by definition and imitating what happens in the smooth category, the Laplacian of $u$ has to be introduced as (the element of minimal norm in) $-\partial^-\E^\sfKS(u)$, and on the other one that in the gradient flow equation \eqref{eq:gfi} it is $-\partial^-\E$ who appears.

In this direction, it is interesting to point out that the classical  procedure of minimizing 
\[
y\quad\mapsto\quad\E(y)+\frac{\sfd^2_\Y(y,\bar y)}{2\tau},
\]
which is the cornerstone of most existence results about gradient flows in the metric setting (see e.g.\ \cite{AmbrosioGigliSavare08}), produces a (unique, if $\tau>0$ is small enough) point $y_\tau$ for which we have $\frac1\tau(\G_{y_\tau}^{\bar y})'_0\in\partial^-\E(y_\tau)$. In particular it gives no informations  about whether $-\partial^-\E(y_\tau)$ is not empty. In our approach this latter fact, and the related one   that the slope at $y$ coincides with the norm of the element of least norm in $-\partial^-\E(y)$, will be a consequence of the fact that gradient flow trajectories satisfy an analogue of \eqref{eq:gfi}, see Theorem \ref{thm:rightD}.
\fr }
\end{remark}
It will be important to know that in $-\partial^-\E(y)$ there is always an element of minimal norm:
\begin{proposition}
Let $\Y$ be a locally $\Cat \kappa$-space, $\E\colon \Y \rightarrow \R \cup \{+\infty\}$ be a $\lambda$-convex and lower semicontinuous functional, $\lambda \in \R$, and $y \in \Y$. Then, $- \partial^-\E(y)$ is a closed and convex subset of $\T_y\Y$. In particular, if this set is not empty, the optimization problem $$\inf_{v \in -\partial^-\E(y)} \vert v \vert_y$$ admits a unique minimiser. 
\end{proposition}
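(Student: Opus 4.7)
The strategy is to verify that $-\partial^-\E(y)$ is closed and convex inside the \Cat0 tangent cone $\T_y\Y$ (Theorem \ref{thm:tancat}); once this is done, since $|\cdot|_y=\sfd_y(0_y,\cdot)$, Proposition \ref{prop:proj} applied to $-\partial^-\E(y)$ and to the point $0_y$ yields the unique element of least norm. So the real work is in closedness and convexity.

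\textbf{Closedness.} Take $v_n\to v$ in $\T_y\Y$ with $v_n\in-\partial^-\E(y)$, fix $z\in\Y$ and any geodesic $\gamma$ from $y$ to $z$ (by the paragraph after \eqref{eq:subder} we can test the defining inequality against any such $\gamma$). Since $\gamma'_0$ is then a fixed vector, the continuity of the scalar product from Proposition \ref{prop:hilbertine} gives $\la v_n,\gamma'_0\ra_y\to\la v,\gamma'_0\ra_y$, and passing to the limit in $\E(y)-\la v_n,\gamma'_0\ra_y+\tfrac\lambda2\sfd_\Y^2(y,z)\le\E(z)$ produces the same inequality for $v$.

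\textbf{Convexity.} Since $\T_y\Y$ is \Cat0 and hence uniquely geodesic, a closed subset is convex as soon as it is midpoint-closed (iterate to cover dyadic-rational interior points of geodesics and use closure). Thus I need only check that if $v_1,v_2\in-\partial^-\E(y)$ then their midpoint $m$, which by definition satisfies $2m=v_1\oplus v_2$, belongs to $-\partial^-\E(y)$. Averaging the defining inequalities for $v_1,v_2$ yields $\tfrac12(\la v_1,\gamma'_0\ra_y+\la v_2,\gamma'_0\ra_y)\ge\E(y)-\E(z)+\tfrac\lambda2\sfd_\Y^2(y,z)$, while the scalar homogeneity \eqref{eq:prhom} together with the one-sided concavity \eqref{eq:concav} gives $\la m,\gamma'_0\ra_y=\tfrac12\la v_1\oplus v_2,\gamma'_0\ra_y\ge\tfrac12(\la v_1,\gamma'_0\ra_y+\la v_2,\gamma'_0\ra_y)$. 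Chaining the two estimates and rearranging gives the subdifferential inequality for $m$.

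\textbf{Where the work lies.} Nothing truly delicate arises, but the conceptual point worth flagging is that on the cone $\T_y\Y$ the operation $\oplus$ is \emph{not} a linear sum, so convexity of $-\partial^-\E(y)$ cannot be argued by naive averaging; it must pass through the one-sided estimate \eqref{eq:concav}, which fortunately has exactly the correct sign to compensate the minus sign appearing in front of the scalar product in the definition of $-\partial^-\E$.
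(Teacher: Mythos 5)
Your proof is correct and follows essentially the same route as the paper: closedness via the continuity of $\la\cdot,\cdot\ra_y$ on $\T_y\Y$, convexity via the concavity estimate \eqref{eq:concav} combined with the homogeneity \eqref{eq:prhom}, and then Proposition \ref{prop:proj} applied to $0_y$. The paper states the resulting inequality $-\la(\G_{v_1}^{v_2})_t,w\ra_y\leq -(1-t)\la v_1,w\ra_y-t\la v_2,w\ra_y$ directly for all $t\in[0,1]$, whereas you handle only the midpoint and recover the general case by dyadic iteration and closedness — a minor, and if anything more careful, variant of the same argument.
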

\begin{proof} Recalling that $\T_y\Y$ is \Cat0, by Proposition  \ref{prop:proj} the existence of a unique minimizer in $-\partial^-\E(y)$ for the norm, i.e.\ of a unique metric projection of $0_y$ onto $-\partial^-\E(y)$, will follow once we show that $-\partial^-\E(y)$ is closed and convex. 

The fact that it is closed follows from the definition and the consideration already stated in Proposition \ref{prop:hilbertine} that the scalar product $\la\cdot,\cdot\ra_y$ is continuous on $\T_y\Y$. The convexity follows from the inequality
\[
-\la(\G_{v_1}^{v_2})_t,w\ra_y\leq -(1-t)\la v_1,w\ra_y-t\la v_2,w\ra_y\qquad\forall v_1,v_2,w\in\T_y\Y,\ t\in[0,1],
\]
which is a direct consequence of \eqref{eq:prhom} and \eqref{eq:concav}.
\end{proof}

\subsection{Subdifferential formulation}
Here we prove the main results of this note, namely Theorem \ref{thm:rightD} and Corollary \ref{cor:eqfor} below. We shall use the following preliminary result (notice that the fact that equality holds in \eqref{eq:sbn} will be obtained in \eqref{eq:s3}):
\begin{proposition}\label{prop:slopebnd}
Let $\Y$ be locally $\Cat\kappa$ and $\E \colon \Y \rightarrow \R \cup \{+\infty\}$ be a $\lambda$-convex and lower semicontinuous functional, $\lambda\in\R$. Then, for every $y\in D(-\partial^-\E)$, we have
\begin{equation}
\label{eq:sbn}
|\partial^-\E|(y)\leq\inf_{v\in -\subd y} |v|_y.
\end{equation}
In particular, $D(-\partial^-\E)\subset D(|\partial^-\E|) $.
\end{proposition}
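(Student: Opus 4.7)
The plan is to unpack the definition of $-\partial^-\E(y)$, pick an arbitrary $v\in -\partial^-\E(y)$, and bound the slope at $y$ pointwise by $|v|_y$ by testing the subdifferential inequality against points $z$ near $y$.

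More concretely, fix $v\in-\partial^-\E(y)$. By Definition \ref{def:md} (or equivalently \eqref{eq:subder}) together with the remark that any geodesic from $y$ to $z$ can be used, we have for every $z\in\Y$ and every geodesic $\gamma$ from $y$ to $z$ that
\[
\E(y)-\E(z)\leq \langle v,\gamma'_0\rangle_y-\frac{\lambda}{2}\sfd_\Y^2(y,z).
\]
For $z$ sufficiently close to $y$ (so that the geodesic from $y$ to $z$ is unique by the continuous dependence property stated on p.~6), take $\gamma=\G_y^z$; then $\gamma'_0=(\G_y^z)'_0$ and $|\gamma'_0|_y=\sfd_\Y(y,z)$. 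Applying the Cauchy--Schwarz inequality \eqref{eq:CS} we get
\[
\E(y)-\E(z)\leq |v|_y\,\sfd_\Y(y,z)-\frac{\lambda}{2}\sfd_\Y^2(y,z).
\]
Dividing by $\sfd_\Y(y,z)>0$, taking the positive part, and passing to the $\limsup$ as $z\to y$ we obtain
\[
|\partial^-\E|(y)=\limsup_{z\to y}\frac{(\E(y)-\E(z))^+}{\sfd_\Y(y,z)}\leq |v|_y,
\]
since the $\lambda\,\sfd_\Y(y,z)/2$ term vanishes in the limit. Taking the infimum over $v\in-\partial^-\E(y)$ yields \eqref{eq:sbn}. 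The inclusion $D(-\partial^-\E)\subset D(|\partial^-\E|)$ is then immediate: any $y$ for which $-\partial^-\E(y)\neq\emptyset$ admits some $v$ with $|v|_y<\infty$, whence $|\partial^-\E|(y)<\infty$.

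There is no real obstacle here: the argument is a one-line application of Cauchy--Schwarz to the subdifferential inequality. The only subtlety is ensuring that for $z$ close to $y$ the tangent vector $\gamma'_0$ appearing in the definition of $-\partial^-\E(y)$ really is $(\G_y^z)'_0$ with $|(\G_y^z)'_0|_y=\sfd_\Y(y,z)$, which is guaranteed by uniqueness of short geodesics in locally $\Cat\kappa$-spaces and by the very construction of the norm on $\T_y\Y$.
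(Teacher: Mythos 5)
Your proof is correct and follows essentially the same route as the paper: test the subdifferential inequality of Definition \ref{def:subdiff} against arbitrary $z$, apply Cauchy--Schwarz \eqref{eq:CS} to $\la v,\gamma'_0\ra_y$ using $|\gamma'_0|_y=\sfd_\Y(y,z)$, and compare with the slope. The only (immaterial) difference is that you pass to the $\limsup$ as $z\to y$ via the local definition \eqref{eq:defsl}, so the $\lambda$-term disappears in the limit, whereas the paper takes the supremum over all $z\neq y$ and invokes the global characterization of the slope in Lemma \ref{lem:slopelsc}; also, your restriction to $z$ near $y$ to secure uniqueness of the geodesic is harmless but unnecessary, since any constant-speed geodesic from $y$ to $z$ satisfies $|\gamma'_0|_y=\sfd_\Y(y,z)$.
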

\begin{proof} Let  $v \in -\partial^-\E (y)$ and notice that
$$ \E(y)-\E(z) +\frac{\lambda}{2}\sfd_{\Y}^2(y,z) \le \vert \la v,(\G_y^z)'_0\ra_y \vert \stackrel{\eqref{eq:CS}}\le \vert v\vert_y \sfd_{\Y}(y,z),\qquad\forall z\in\Y $$ 
which in turns implies
$$ \left(\frac{\E(y)-\E(z)}{\sfd_{\Y}(y,z)} +\frac{\lambda}{2}\sfd_{\Y}(y,z)\right)^+\le \vert v \vert_y \qquad \forall z \in \Y,\ z\neq y.$$
Taking the supremum over $z \neq y$ and recalling Lemma \ref{lem:slopelsc} we conclude.
\end{proof}

We now come to the main result of this manuscript, namely the existence of right incremental ratios of the flow for all time. 
\begin{theorem}[Right derivatives of the flow]\label{thm:rightD}
Let $\Y$ be locally $\Cat\kappa$ and $\E \colon \Y \rightarrow \R \cup \{+\infty\}$ be a $\lambda$-convex and lower semicontinuous functional, $\lambda\in\R$. Let $y \in \overline{D(\E)}$, and $(y_t)$ be the gradient flow trajectory starting from $y$ (recall Theorem \ref{thm:GF}). 

Then, for \emph{every} $t>0$, the right `difference quotient' $\frac{1}{h}(\G_{y_t}^{y_{t+h}})'_0$ strongly converges to the element of minimal norm in $-\partial^-\E({y_t})\subset \T_{y_t}\Y$ (i.e.\ to $\Pr_{-\partial^-\E({y_t})}(0_{y_t})$) as $h$ goes to $0^+$. The same holds for $t=0$ if (and only if) we have $y \in D(|\partial^-\E|)$. 

Moreover, $D(-\partial^-\E)=D(|\partial^-\E|)$ and the identity
\begin{equation}
\label{eq:s3}
|\partial^-\E|(y)=\min_{v\in -\partial^-\E(y)}|v|_y\qquad\forall y\in\Y
\end{equation}
holds, where as customary the minimum of the empty set is declared to be $+\infty$. In particular, $D(-\partial^-\E)$ is dense in $D(\E)$.
\end{theorem}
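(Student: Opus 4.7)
The plan is to fix $t>0$---the case $t=0$ under the extra hypothesis $y\in D(|\partial^-\E|)$ being entirely analogous---and to study the difference quotients $v_h:=\tfrac1h(\G_{y_t}^{y_{t+h}})'_0\in\T_{y_t}\Y$ for small $h>0$. By Theorem \ref{thm:GF} we have $y_t\in D(|\partial^-\E|)$, and the sharp dissipation rate (ii) of Theorem \ref{thm:GFdef} gives
\[
|v_h|_{y_t}=\frac{\sfd_\Y(y_{t+h},y_t)}{h}\longrightarrow |\dot y_t^+|=|\partial^-\E|(y_t)<\infty.
\]
The strategy is then: (a) establish the upper bound
\begin{equation}\label{eq:plan1}
\limsup_{h\downarrow 0}\bigl(-\la v_h,(\G_{y_t}^z)'_0\ra_{y_t}\bigr)\le \E(z)-\E(y_t)-\tfrac\lambda2\sfd_\Y^2(y_t,z)
\end{equation}
for every $z\in B_{\sfr_{y_t}}(y_t)$; (b) extract a weakly converging subsequence $v_{h_k}\weakto v$ and use \eqref{eq:plan1} to show $v\in-\partial^-\E(y_t)$; (c) match norms to identify $v$ with the unique element of minimal norm in $-\partial^-\E(y_t)$ and upgrade weak to strong convergence.

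For step (a) I would combine two ingredients. Integrating the EVI \eqref{eq:evi} over $[t,t+h]$, dividing by $h$, and using the continuity of $r\mapsto\E(y_r)$ (from sharp dissipation) and of $r\mapsto\sfd_\Y(y_r,z)$ gives
\[
\limsup_{h\downarrow 0}\frac{\sfd_\Y^2(y_{t+h},z)-\sfd_\Y^2(y_t,z)}{2h}\le \E(z)-\E(y_t)-\tfrac\lambda2\sfd_\Y^2(y_t,z).
\]
On the other hand, the first line of \eqref{eq:d2variation} combined with the law of cosines in $\M_\kappa$---which reads $-ab\cos(\angle_{y_t}^\kappa(y_{t+h},z))=\tfrac12(c^2-a^2-b^2)+O(\kappa a^2 b^2)$ for $a:=\sfd_\Y(y_t,z)$, $b:=\sfd_\Y(y_{t+h},y_t)$, $c:=\sfd_\Y(y_{t+h},z)$, using that $b=O(h)$---yields
\[
-\la v_h,(\G_{y_t}^z)'_0\ra_{y_t}\le \frac{\sfd_\Y^2(y_{t+h},z)-\sfd_\Y^2(y_t,z)}{2h}+O(h).
\]
Chaining the two estimates and letting $h\downarrow 0$ produces \eqref{eq:plan1}.

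For step (b), boundedness of $(v_h)$ combined with Proposition \ref{prop:weakprop}(iii), applied in the \Cat0 cone $\T_{y_t}\Y$ (cf.\ Theorem \ref{thm:tancat}), provides a weakly converging subsequence $v_{h_k}\weakto v$. Applying Proposition \ref{prop:weakprop}(vi) with first argument the constant (hence strongly converging) sequence $(\G_{y_t}^z)'_0$ and second argument $(v_{h_k})$, then negating, yields
\[
-\la v,(\G_{y_t}^z)'_0\ra_{y_t}\le \liminf_k\bigl(-\la v_{h_k},(\G_{y_t}^z)'_0\ra_{y_t}\bigr)\le \limsup_{h\downarrow 0}\bigl(-\la v_h,(\G_{y_t}^z)'_0\ra_{y_t}\bigr),
\]
which together with \eqref{eq:plan1} forces $v\in-\partial^-\E(y_t)$. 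For step (c), the weak lower semicontinuity of the norm (an instance of Proposition \ref{prop:weakprop}(v), since $|\cdot|_{y_t}$ is convex and continuous on the \Cat0 cone) gives $|v|_{y_t}\le\liminf_k|v_{h_k}|_{y_t}=|\partial^-\E|(y_t)$, while the reverse inequality $|\partial^-\E|(y_t)\le|v|_{y_t}$ is exactly Proposition \ref{prop:slopebnd}. Hence $|v|_{y_t}$ coincides with both the slope and the infimum of the norm over $-\partial^-\E(y_t)$, so $v=\Pr_{-\partial^-\E(y_t)}(0_{y_t})$ is independent of the subsequence, and the full family $(v_h)$ converges strongly to $v$ by Proposition \ref{prop:weakprop}(ii) with reference point $0_{y_t}$.

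The remaining claims follow quickly. The equality $D(-\partial^-\E)=D(|\partial^-\E|)$ combines Proposition \ref{prop:slopebnd} (one inclusion) with the $t=0$ instance of (a)--(c) (the other), and \eqref{eq:s3} is precisely the norm identity from step (c). Density of $D(-\partial^-\E)$ in $D(\E)$ is immediate, since any $y\in D(\E)$ is the $t\downarrow 0$ limit of its gradient flow, which by Theorem \ref{thm:GF} lies in $D(|\partial^-\E|)=D(-\partial^-\E)$ for every $t>0$. The main technical obstacle I expect is in step (a): the $O(h)$ remainder from the non-Euclidean cosine expansion must be controlled uniformly in $z$ on a neighbourhood, and one must check that $y_{t+h}\in B_{\sfr_{y_t}}(y_t)$ for small $h$. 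Conceptually the crux is step (c), where the sharp dissipation upper bound on $|v_{h_k}|$ meets the slope lower bound from Proposition \ref{prop:slopebnd}, and the resulting squeeze on $|v|_{y_t}$ is what ultimately forces strong convergence.
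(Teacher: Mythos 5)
The proposal is correct and follows essentially the same route as the paper: uniform bound on the difference quotients via sharp dissipation, weak compactness in the \Cat0 cone, membership of the weak limit in $-\partial^-\E(y_t)$ via the integrated EVI together with \eqref{eq:d2variation} and the weak upper semicontinuity of the scalar product, and finally the norm squeeze against Proposition \ref{prop:slopebnd} to identify the minimal-norm element and upgrade to strong convergence. The only cosmetic differences are that you work directly at $t>0$ instead of reducing to $t=0$ by the semigroup property, and you leave implicit the localization step (the paper's passage through $z_s=\gamma_s$ and $s\downarrow0$, already covered by \eqref{eq:subder}) needed to pass from the inequality on $B_{\sfr_{y_t}}(y_t)$ to the full subdifferential inequality.
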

\begin{proof}
By the semigroup property ensured by the uniqueness of gradient flow trajectories and taking into account that $y_t\in D(|\partial^-E|)$ for every $t>0$ (recall \eqref{eq:s5}), it suffices to show the claim for $t=0$ under the assumption $y \in D(|\partial^-\E|)$. Suppose $y$ is not a minimum point for $\E$, otherwise there is nothing to prove. In particular, $(ii)$ of Theorem \ref{thm:GFdef} ensures that $|\dot{y}_0|$ exists and it is positive. Also, notice that  the continuity at time $t=0$ of the gradient flow trajectory ensures that for $\epsilon>0$ sufficiently small we have $y_h\in B_{\sfr_y}(y)$ for every $h \in (0,\epsilon)$. In particular for such $h$ the tangent vector $v_h:=\tfrac1h(\G_{y}^{y_{h}})'_0\in \T_y\Y$ is well defined and the statement makes sense. Fix such $\epsilon>0$.

\noindent\underline{\textsc{Step 1}} For every $h\in(0,\epsilon)$  we have
\begin{equation}
\label{eq:s2}
|v_h |_y=\frac{\sfd_{\Y}(y_h,y)}{h} \le  \fint_0^h \vert \dot{y}_t\vert \, \d t \stackrel{\eqref{eq:metconv}}=  \fint_0^h |\partial^-\E|(y_t) \, \d t \stackrel{\eqref{eq:regul}}\le |\partial^-\E| (y)\fint_0^h e^{-\lambda t} \, \d t.
\end{equation}
Hence  $\sup_{h\in(0,\epsilon)}|v_h|_y<\infty$, therefore point $(iii)$ of Proposition \ref{prop:weakprop} gives that for every sequence $h_n\downarrow0$ there is a subsequence, not relabelled, such that $v_{h_n} \weakto v$ for some  $v \in \T_y\Y$.

Fix such sequence and such weak limit $v$. To conclude it is sufficient to prove that the convergence is strong and that $v$ is the element of minimal norm in $-\partial^-\E(y)$, as this in particular grants that the limit is independent on the particular subsequence chosen.

\noindent\underline{\textsc{Step 2}} We claim that $v \in -\partial^-\E( {y})$. To see this, integrate \eqref{eq:evi} from $0$ to $h$ and divide by $h$ to obtain
\[
\frac{\sfd_{\Y}^2(y_h,z)-\sfd_{\Y}^2(y,z)}{2h}+ \fint_0^h \E(y_t)+\frac{\lambda}{2}\sfd_{\Y}^2(y_t,z) \, \d t \leq \E(z)\qquad\forall z\in\Y, \ h\in(0,\epsilon).
\]
Letting $h=h_n\downarrow0$ and recalling that $\E$ is lower semicontinuous we deduce that
\begin{equation}
\label{eq:s1}
\limi_{n\to\infty}\frac{\sfd_{\Y}^2(y_{h_n},z)-\sfd_{\Y}^2(y,z)}{2h_n}+\E(y)+\frac{\lambda}{2}\sfd_{\Y}^2(y,z)\leq \E(z)\qquad\forall z\in\Y.
\end{equation}
Next, fix $z \in \Y$, let $\gamma \in \Geo_y\Y$ with $\gamma_1=z$, denote $z_s:= \gamma_s$ and notice that, for $s$ sufficiently small, $z_s,y_{h_n} \in B_{\sfr_y}(y)$. Now \eqref{eq:d2variation} yields
\[
\begin{split}
\liminf_{n\rightarrow\infty }-\la v_{h_n} ,(\G_y^{z_s})'_0\ra_y &\le \sfd_\Y(y,z)\liminf_{n\rightarrow\infty}\frac{\sfd(y_{h_n},y)}{h_n}\frac{\sfd_\Y(y_{h_n},z_s)-\sfd_\Y(y,z_s)}{|\dot{y}_0|h_n}\\
&=\liminf_{n\rightarrow\infty}\frac{\sfd^2_\Y(y_{h_n},z_s)-\sfd^2_\Y(y,z_s)}{2h_n},
\end{split}
\]
having used the fact that $\liminf_na_nb_n = a\liminf_nb_n$ if $\lim_na_n=a>0$ and $(a_n),(b_n)\subset\R$ are bounded, and a chain rule argument in the last equality. Thus, recalling the weak upper semicontinuity of the scalar product proved in point $(vi)$ of Proposition \ref{prop:weakprop} we  get
\[
\limi_{n\to\infty}\frac{\sfd_{\Y}^2(y_{h_n},z_s)-\sfd_{\Y}^2(y,z_s)}{2h_n}\geq -\la v,(\G_y^{z_s})'_0\ra_{y}.
\]
Now, combine with \eqref{eq:s1} to get 
\[ \E(y) -\la v,(\G_y^{z_s})'_0\ra_{y} + \frac{\lambda}{2}\sfd^2_\Y(z_s,y) \le \E(z_s) \le (1-s)\E(y) + s\E(z)-\frac{\lambda}{2}s(1-s)\sfd^2_\Y(z,y). \]
Finally, using that $(\G_y^{z_s})'_0 = s\gamma'_0$, $ \sfd^2_\Y(z_s,y)=s^2\sfd_\Y^2(y,z)$ and \eqref{eq:prhom}, we can rearrange terms and take the limit as $s\downarrow 0$ to get 
\[ \E(y) -\la v,\gamma'_0\ra_{y} + \frac{\lambda}{2}\sfd^2_\Y(z,y) \le \E(z)\qquad \text{ for every } \gamma \text{ geodesic from $y$ to $z$}.\]
Given that $z$ was arbitrary, we conclude.

\noindent\underline{\sc Step 3} Since $|\cdot|_y^2:\T_y\Y\to\R$ is convex and continuous, by point $(v)$ of Proposition \ref{prop:weakprop} we  get
\[
\begin{split}
|v|^2_y\leq\limi_{n\to\infty}|v_{h_n}|^2_y\leq\lims_{n\to\infty}|v_{h_n} |^2_y\stackrel{\eqref{eq:s2}}\leq |\partial^-E|^2(y)\stackrel{\eqref{eq:sbn}}\leq\inf_{w\in-\partial^-\E(y)}|w|^2_y\leq |v|^2_y,
\end{split}
\]
and thus all the inequalities must be equalities. This proves at once the strong convergence of $(v_{h_n})$ to $v$ (by the convergence of norms and point $(ii)$ of Proposition \ref{prop:weakprop}) and that $v$ is the element of minimal norm in $-\partial^-\E(y)$.

The argument also proves that if $y\in D(|\partial^-\E|)$, then $y\in D(-\partial^-\E)$ and in this case the equality in \eqref{eq:s3} holds. Taking into account Proposition \ref{prop:slopebnd} we conclude that $D(|\partial^-\E|)=D(-\partial^-\E)$ that  \eqref{eq:s3} holds for every $y\in\Y$, as desired.

The last claim then follows from the existence of gradient flow trajectories starting from points in $D(\E)$ (Theorem \ref{thm:GF}) and \eqref{eq:metconv}.
\end{proof}
As a direct consequence of the above result, we see that we can characterize gradient flow trajectories by means of the classical differential inclusion $x_t'\in-\partial^-\E(x_t)$ which can be used to define such evolution in the Hilbert setting:
\begin{corollary}\label{cor:eqfor}
Let $\Y$ be locally $\Cat\kappa$ and $\E \colon \Y \rightarrow \R \cup \{+\infty\}$ be a $\lambda$-convex and lower semicontinuous functional, $\lambda\in\R$. Let $y \in \overline{D(\E)}$, and $(0,\infty)\ni t\mapsto y_t\in D(\E)$ be a locally absolutely continuous curve. Then, the following are equivalent:
\begin{itemize}
\item[(i)] $(y_t)$ is  a gradient flow trajectory for $\E$ starting from $y$, i.e.\ satisfies any of the three equivalent conditions in Theorem \ref{thm:GFdef}.
\item[(ii)] The right derivative $y'^+_t$ exists for every $t>0$ and
\[
\left\{\begin{array}{l}
y'^+_t\in-\partial^-\E(y_t)\quad\forall t>0\quad \text{ and is the element of minimal norm},\\
\displaystyle{\lim_{t\downarrow0}y_t}=y.
\end{array}
\right.
\]
If $y\in D(|\partial^-\E|)=D(-\partial^-\E)$ then the above holds also at $t=0$.
\item[(iii)] It holds
\[
\left\{\begin{array}{l}
y'^+_t\in-\partial^-\E(y_t)\quad a.e.\ t>0,\\
\displaystyle{\lim_{t\downarrow0}y_t}=y.
\end{array}
\right.
\]
\end{itemize}
\end{corollary}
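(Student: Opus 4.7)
The plan is to close the cycle $(i)\Rightarrow(ii)\Rightarrow(iii)\Rightarrow(i)$. The first implication is exactly the content of Theorem \ref{thm:rightD}: assuming $(y_t)$ is the gradient flow trajectory starting from $y$, the theorem guarantees that for every $t>0$ the vector $y_t'^+$ exists and coincides with $\Pr_{-\partial^-\E(y_t)}(0_{y_t})$, and the final sentence of that theorem handles the $t=0$ case under the additional assumption $y\in D(|\partial^-\E|)$. The implication $(ii)\Rightarrow(iii)$ is immediate since a condition that holds for every $t>0$ a fortiori holds a.e.

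For $(iii)\Rightarrow(i)$, the idea is to show that $(y_t)$ satisfies the EVI \eqref{eq:evi}, so that by Theorem \ref{thm:GFdef} it is the gradient flow trajectory starting from $y$ (using that $\lim_{t\downarrow0}y_t=y$ from $(iii)$). Fix $z\in\Y$; I restrict attention to $t$ such that $y_t'^+$ exists and $y_t'^+\in-\partial^-\E(y_t)$ (a full-measure set) and, in view of the localization remark in the proof of Theorem \ref{thm:GFdef}, to $z$ in a small neighbourhood of $y_t$ where the geodesic $\G_{y_t}^z$ is the unique geodesic connecting the two endpoints. By Definition \ref{def:md} applied with $\gamma=\G_{y_t}^z$ and $v=y_t'^+$,
\[
\E(y_t)-\la y_t'^+,(\G_{y_t}^z)'_0\ra_{y_t}+\tfrac{\lambda}{2}\sfd_\Y^2(y_t,z)\le \E(z).
\]
On the other hand, Proposition \ref{cor:derd2} applied to the absolutely continuous curve $(y_t)$ yields
\[
\tfrac{\d}{\d t}\tfrac12\sfd_\Y^2(y_t,z)=-\la y_t'^+,(\G_{y_t}^z)'_0\ra_{y_t}\qquad a.e.\ t.
\]
Substituting this identity into the previous inequality yields \eqref{eq:evi} a.e.\ $t$, which together with the initial datum condition and Theorem \ref{thm:GFdef} gives $(i)$.

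The main obstacle I anticipate is ensuring that Proposition \ref{cor:derd2} can legitimately be combined with the subdifferential inequality at the same $t$: both hold only for a.e.\ $t$, but their intersection still has full measure, so this is not a genuine difficulty. A secondary technical point is the local nature of the geodesic uniqueness needed for Proposition \ref{cor:derd2} and for the unambiguous evaluation of $\la\cdot,(\G_{y_t}^z)'_0\ra_{y_t}$; this is handled precisely by the localization observation already exploited in the proof of Theorem \ref{thm:GFdef}, namely that checking \eqref{eq:evi} for $z$ in a neighbourhood of $y_t$ is equivalent to checking it for all $z\in\Y$, and analogously that the subdifferential inequality with $\gamma=\G_{y_t}^z$ ranging over $z$ close to $y_t$ is enough because it propagates to all $z$ by $\lambda$-convexity along geodesics, as spelled out in the discussion after Definition \ref{def:md}.
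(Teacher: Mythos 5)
Your proposal is correct and follows essentially the same route as the paper: $(i)\Rightarrow(ii)$ via Theorem \ref{thm:rightD}, $(ii)\Rightarrow(iii)$ trivially, and $(iii)\Rightarrow(i)$ by combining the subdifferential inequality from Definition \ref{def:md} with the derivative formula of Proposition \ref{cor:derd2} to recover the EVI, localized near $y_t$ exactly as in the proof of Theorem \ref{thm:GFdef}. The paper's own proof is just a terser version of what you wrote.
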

\begin{proof}
The implication $(i)\Rightarrow(ii)$ is proved in Theorem \ref{thm:rightD} above and the one $(ii)\Rightarrow(iii)$ is obvious. The fact that $(iii)$ implies $(i)$ (in the form of the Evolution Variation Inequality) is a direct consequence of Proposition \ref{cor:derd2} (applied in a $\Cat\kappa$ neighbourhood of $y_t$ in combination with arguments  similar to those outlined in the proof of Theorem \ref{thm:GFdef}  to cover the case of a local $\Cat\kappa$ space) and the definition of $-\partial^-\E$.
\end{proof}
\begin{remark}{\rm
In the setting of Alexandrov geometry it is more customary to study the gradient flow of semi\emph{concave} functions ${\sf F}$, thus studying (a properly interpreted version of) $y_t'\in\partial^+{\sf F}$. Let $\E$ be semiconvex on a $\Cat\kappa$-space $\Y$ and put ${\sf F}:=-\E$. Then it is clear that the slope $|\partial^-\E|$ as we defined it coincides with the \emph{absolute gradient} $|\nabla{\sf F}|$ as defined in \cite[Definition 4.1]{Lyt05}, therefore, taking into account the characterization \eqref{eq:metconv}, we see that up to a different choice of parametrization, our notion of gradient flow trajectory coincides with the one of gradient-like curve studied in \cite[Definition 6.1]{Lyt05}.

The property $\frac{\d}{\d t^+}{\sf F}(y_t)=-\frac{\d}{\d t^+}\E(y_t)=|\partial^-\E|^2(y_t)=|v_t|^2_{y_t}$, where $v_t\in-\partial^-\E(y_t)$ is the element of minimal norm, together with the existence of the right derivative of $y_t$ and the characterization \eqref{eq:subder} show that the element of minimal norm in $-\partial^-\E(y)$ coincides with  $\nabla {\sf F}(y)$ as defined in \cite[Definition 11.4.1]{AKP19} on spaces with curvature bounded from \emph{below}.  This shows that our `differential' perspective on gradient flows is compatible with the one studied in \cite{AKP19} on CBB spaces.
}\fr\end{remark}

\section{Laplacian of \Cat0-valued maps}\label{Sect4}

\subsection{Pullback geometric tangent bundle} \label{Sec2.uTGY}

\subsubsection{The general non-separable case}

 For the purpose of this manuscript, a \emph{metric measure space} $(\X,\sfd,\mm)$ is always intended to be given by: a complete and separable metric space $(\X,\sfd)$ equipped with a non-negative and non-zero Borel measure giving finite mass to bounded sets. In some circumstances we shall add further assumptions on $\X$, typically in the form a $\RCD(K,N)$ condition.

Thus  let us fix a pointed \Cat0 space $(\Y,\sfd_\Y,\bar y)$, a metric measure space $(\X,\sfd,\mm)$  and an open subset  $\Omega\subset\X$. We recall that the space $L^0(\Omega,\Y)$ is the collection of all Borel maps $u\colon\Omega\to\Y$ which are essentially separably valued (i.e.\ for some separable subset $\tilde\Y\subset\Y$ we have $\mm(u^{-1}(\Y\setminus\tilde\Y))=0$), where two maps agreeing $\mm$-a.e.\ are identified. Then  $L^2(\Omega,\Y_{\bar y})\subset L^0(\Omega,\Y)$ is collection of those (equivalence classes of maps) $u$ such that $\int_\Omega\sfd^2_\Y(u(x),\bar y)\,\d\mm(x)<\infty$. The space $L^2(\Omega,\Y_{\bar y})$ comes naturally with the distance
\[
\sfd_{L^2}^2(u,v):=\int_\Omega\sfd^2_\Y(u(x),v(x))\,\d\mm(x)
\]
and by standard means one sees that with such distance the space is complete and that finite-ranged maps are dense.  Moreover, for $u,v\in L^2(\Omega,\Y_{\bar y})$ a direct computation shows that $t\mapsto (\G^v_u)_t\in L^2(\Omega,\Y_{\bar y})$, where $(\G^v_u)_t(x):=(\G^{v(x)}_{u(x)})_t$, is a geodesic from $u$ to $v$ (the fact that $(\G^v_u)_t\colon\Omega\to\Y$ is Borel follows from the continuous dependence of the  - unique - geodesics on $\Y$ w.r.t.\ the endpoints). Also, by appealing to the equivalent characterization \eqref{eq:cat0def} of \Cat0-spaces, the computation
\[
\begin{split}
\sfd_{L^2}^2( (\G^v_u)_t,w)&=\int \sfd^2_\Y((\G^{v(x)}_{u(x)})_t,w(x))\,\d\mm(x)\\
&\stackrel{\eqref{eq:cat0def}}\leq \int (1-t)\sfd_\Y^2(u(x),w(x))+t\sfd_\Y^2(v(x),w(x))-t(1-t)\sfd_\Y^2(u(x),v(x))\,\d\mm(x)\\
&=(1-t)\sfd_{L^2}^2( u,w)+t\sfd_{L^2}^2( v,w)-t(1-t)\sfd^2_{L^2}(u,v)
\end{split}
\]
valid for any $w\in L^2(\Omega,\Y_{\bar y})$ and every $t\in[0,1]$, reveals that $L^2(\Omega,\Y_{\bar y})$ is a \Cat0-space as well and thus $\G^v_u$ is the only geodesic from $u$ to $v$.

In particular, given $u\in L^2(\Omega,\Y_{\bar y})$ we have a well defined tangent cone $\T_u L^2(\Omega,\Y_{\bar y})$ containing what we may think of as the set of `infinitesimal variations' of $u$. Intuitively, these variations should correspond to a collection, for $\mm$-a.e.\ $x\in\Omega$, of a variation of $u(x)\in\Y$, i.e.\ to a collection of elements of $\T_{u(x)}\Y$.

We now want to make this intuition rigorous and, due to the fact that \Cat0-spaces are typically studied in non separable environments, we first discuss this case, postponing to the next sections the separable case and its relations with the Borel structure on $\T_G\Y$ seen in  Section \ref{Sect2.geo}. Fix $u \in L^2(\Omega,\Y_{\bar{y}})$ and a Borel representative of it, which by abuse of notation we shall continue to denote by $u$. By  $u^*\T_G\Y$ we intend the set
\[
u^*\T_G\Y:=\big\{(x,y,v)\colon x\in\Omega,\ y=u(x),\ v\in \T_y\Y\big\}\subset \X\times\T_G\Y
\]
\begin{figure}[!h]
    \centering
    \includegraphics[scale=.7]{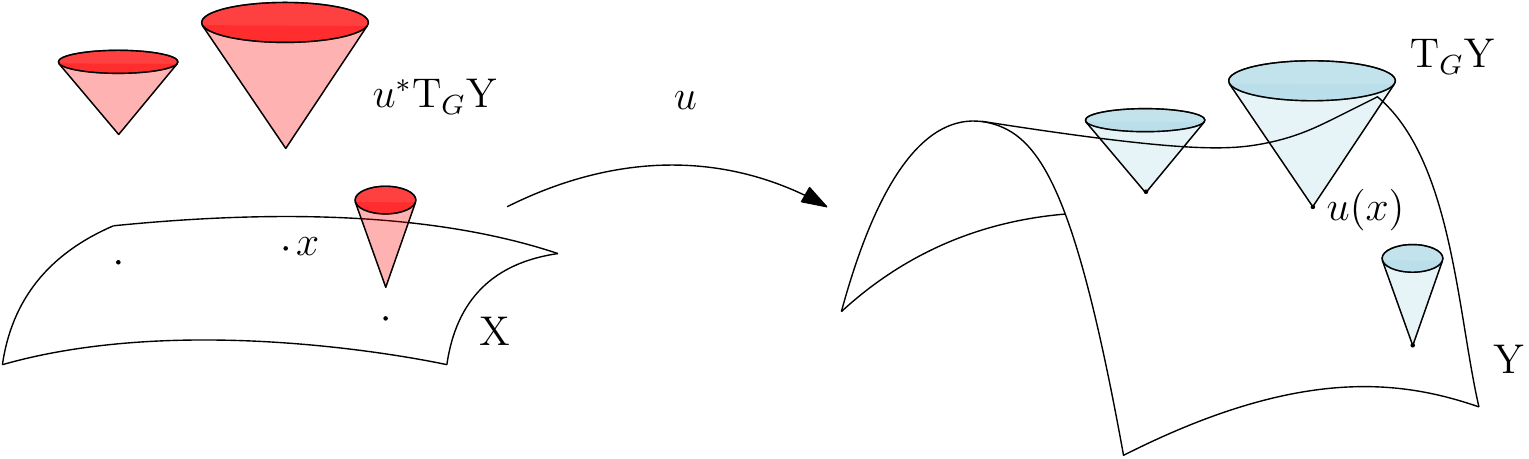}
    \caption{Pullback geometric tangent bundle $u^*\T_G\Y$ via $u \colon \X\to \Y$.}
    \label{fig:uTGY}
\end{figure}
A section  of $u^*\T_G\Y$  is a map ${\sf S}\colon\Omega\to u^*\T_G\Y$ such that $\pi_\X({\sf S}(x))=x$, where $\pi_\X\colon u^*\T_G\Y\to\X$ is the  canonical projection.   Given such a section ${\sf S}$ we write ${\sf S}(x)=(x,u(x),{\sf S}_x)$ for any $x\in\Omega$. We shall denote by ${\sf 0}$ the zero section defined by ${\sf 0}_x:=0_{u(x)}\in \T_{u(x)}\Y$.

Then given another $v\in L^2(\Omega,\Y_{\bar{y}})$ and a Borel representative of it, still denoted by $v$,  and $\alpha\geq 0$, we can consider the section ${\sf S}$ of $u^*\T_G\Y$  given by $x\mapsto (x,u(x),\alpha(\G_{u(x)}^{v(x)})'_0)$. We then have the following simple and useful lemma.
\begin{lemma}\label{le:l2s}
Let $(\Y,\sfd_\Y,\bar y)$ be a pointed   \Cat0-space, $(\X,\sfd,\mm)$ a metric measure space, $\Omega\subset\X$ an open subset, $u,v^1,v^2\colon \Omega\to\Y$ Borel representatives of maps in $L^2(\Omega,\Y_{\bar y})$. Also, let  $\alpha^1,\alpha^2\in\R^+$ and consider the sections ${\sf S}^i$ of $u^*\T_G\Y$ given by ${\sf S}^i_x:=\alpha_i(\G_{u(x)}^{v_i(x)})'_0$, $i=1,2$.  

Then the   maps $\Omega\ni x\mapsto |{\sf S}_x^1|_{u(x)},\sfd_{u(x)}({\sf S}_x^1,{\sf S}_x^2),\la {\sf S}_x^1,{\sf S}_x^2\ra_{u(x)}$ are Borel. 
\end{lemma}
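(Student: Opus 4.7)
The plan is to reduce all three claims to the measurability of suitable scalar quantities built out of $u$, $v^1$, $v^2$ and the distance $\sfd_\Y$, exploiting the simplicity of working with vectors of the form $\alpha(\G_y^z)'_0$ and the continuous dependence of geodesics on their endpoints in \Cat 0 (here $D_\kappa=\infty$, so this dependence is global).

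First I would handle the norm, which is the easiest: by construction of the multiplication by a positive scalar and the definition of the norm on $\T_{u(x)}\Y$, one has
\[
|{\sf S}^i_x|_{u(x)}=\alpha_i\,|(\G_{u(x)}^{v_i(x)})'_0|_{u(x)}=\alpha_i\,\sfd_\Y(u(x),v_i(x)),
\]
so Borel measurability of $u,v^i$ together with continuity of $\sfd_\Y$ gives the claim.

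Next I would attack the distance $\sfd_{u(x)}({\sf S}^1_x,{\sf S}^2_x)$. By definition of $\sfd_y$ on $\Geo_y\Y/\!\sim$ and by the monotonicity property \eqref{eq:mondis} (valid since $\Y$ is \Cat 0), the limsup in \eqref{eq:dy} is an increasing limit, so for every $x\in\Omega$
\[
\sfd_{u(x)}({\sf S}^1_x,{\sf S}^2_x)=\lim_{t\downarrow 0}\frac{\sfd_\Y\bigl((\G_{u(x)}^{v_1(x)})_{\alpha_1 t},(\G_{u(x)}^{v_2(x)})_{\alpha_2 t}\bigr)}{t},
\]
the limit being taken along, say, a sequence $t_n\downarrow 0$ with $\alpha_i t_n\le 1$. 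For each fixed such $t$, continuous dependence of geodesics on their endpoints in \Cat 0 ensures that the evaluation maps $\Y\times\Y\ni(y,z)\mapsto (\G_y^z)_{\alpha_i t}\in\Y$ are continuous; composing with the Borel map $x\mapsto(u(x),v_i(x))$ and with $\sfd_\Y$ shows that the function under the limit is Borel in $x$. A pointwise limit of Borel functions being Borel, this yields the claim.

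Finally, the scalar product is handled through the polarization identity: from the definition
\[
\la{\sf S}^1_x,{\sf S}^2_x\ra_{u(x)}=\tfrac12\bigl(|{\sf S}^1_x|_{u(x)}^2+|{\sf S}^2_x|_{u(x)}^2-\sfd_{u(x)}^2({\sf S}^1_x,{\sf S}^2_x)\bigr),
\]
so the Borel regularity follows from the previous two steps. The only non-routine point in all of this is the measurability of the distance on the tangent cone: there, the crucial ingredient is the continuity of $(y,z)\mapsto (\G_y^z)_t$ in the \Cat 0 setting, which reduces a metric definition to a pointwise limit of continuous functions evaluated along Borel data.
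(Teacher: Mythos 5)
Your proof is correct and follows essentially the same route as the paper: both reduce everything to the Borel regularity of $x\mapsto\sfd_{u(x)}({\sf S}^1_x,{\sf S}^2_x)$, obtained by writing it as a pointwise limit (as $t\downarrow0$) of the Borel functions $x\mapsto t^{-1}\sfd_\Y\big((\G_{u(x)}^{v_1(x)})_{\alpha_1t},(\G_{u(x)}^{v_2(x)})_{\alpha_2t}\big)$, whose measurability comes from the continuous dependence of geodesics on their endpoints. Your explicit treatment of the norm and the polarization step for the scalar product simply spells out what the paper leaves as "the other Borel regularities will follow."
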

\begin{proof}
It is sufficient to prove that $\Omega\ni x\mapsto  \sfd_{u(x)}({\sf S}_x^1,{\sf S}_x^2)\in\R$ is Borel, as then the other Borel regularities will follow. We have already noticed that the maps $x\mapsto (\G_u^{v^i})_{\alpha^it}(x)\in\Y$, $i=1,2$, are Borel, hence so is the map 
\[
x\mapsto\frac{\sfd_\Y\big((\G_u^{v^1})_{\alpha^1t}(x),(\G_u^{v^2})_{\alpha^2t}(x)\big)}{t}
\]
for any $0<t\ll1$. Since these maps pointwise converge to $ x\mapsto  \sfd_{u(x)}({\sf S}_x^1,{\sf S}_x^2)$ as $t\downarrow0$, the claim follows.
\end{proof}
In particular, for ${\sf S}^1,{\sf S}^2$ as in the above statement, the quantity 
 \begin{equation}
\label{eq:defdl2}
\sfd_{L^2}\big({\sf S}^1,{\sf S}^2\big):=\sqrt{\int_\Omega  \sfd^2_{u(x)}({\sf S}^1_x,{\sf S}^2_x)\,\d\mm(x)},
\end{equation}
is well defined. Standard arguments then show that $\sfd_{L^2}$ is symmetric, satisfies the triangle inequality and $\sfd_{L^2}({\sf S},{\sf S})=0$ (but it might happen that $\sfd_{L^2}({\sf S}^1,{\sf S}^2)=0$ for ${\sf S}^1\neq{\sf S}^2$ and that $\sfd_{L^2}({\sf S}^1,{\sf S}^2)=+\infty$).

We then give the following definitions:
\begin{definition}[$\mathcal L^2$ sections of $u^*\T_G\Y$]\label{def:l2se1}
Let $(\Y,\sfd_\Y,\bar y)$ be a pointed   \Cat0-space, $(\X,\sfd,\mm)$ a metric measure space, $\Omega\subset\X$ an open subset and $u$ a Borel representative of a map in $L^2(\Omega,\Y_{\bar y})$. Then, $\mathcal L^2(u^*\T_G\Y,\mm\restr\Omega)$ is the collection  of sections ${\sf S}$ of $u^*\T_G\Y$ such that:
\begin{itemize}
\item[i)] For any $\alpha\in\R^+$ and $v\colon \Omega\to\Y$ Borel and essentially separably valued  we have that $x\mapsto \sfd_{u(x)}({\sf S}_x,\alpha(\G_{u(x)}^{v(x)})'_0)$ is  a Borel function.
\item[ii)] There is a sequence $(\alpha_n)\subset\R^+$ and maps  $v_n\colon \Omega\to\Y$ Borel and essentially separably valued such that for the sections ${\sf S}^n$ given by ${\sf S}^n_x:=\alpha_n(\G_{u(x)}^{v_n(x)})'_0$ we have
\begin{equation}
\label{eq:hl2}
\begin{split}
\sup_{n\in\N}\sfd_{L^2}({\sf S}^n,{\sf 0})&<\infty,\\
\lim_{n\to\infty}\sfd_{u(x)}({\sf S}^n_x,{\sf S}_x)&= 0\qquad\forall x\in\Omega.
\end{split}
\end{equation}
\end{itemize}
\end{definition}
It is clear from the definitions that for ${\sf S}^1,{\sf S}^2\in \mathcal L^2(u^*\T_G\Y,\mm\restr\Omega)$ the map $x\mapsto \sfd_{u(x)}({\sf S}^1_x,{\sf S}^2_x)$ is Borel and $L^2(\mm\restr\Omega)$-integrable, therefore $\sfd_{L^2}({\sf S}^1,{\sf S}^2)$ is well defined by \eqref{eq:defdl2} and finite. 

\begin{definition}[$ L^2$ sections of $u^*\T_G\Y$]\label{def:l2se2}
Let $(\Y,\sfd_\Y,\bar y)$ be a pointed   \Cat0-space, $(\X,\sfd,\mm)$ a metric measure space, $\Omega\subset\X$ an open subset and $u$ a Borel representative of a map in $L^2(\Omega,\Y_{\bar y})$. We define $ L^2(u^*\T_G\Y,\mm\restr\Omega)$ as the quotient of $\mathcal L^2(u^*\T_G\Y,\mm\restr\Omega)$ with respect to the relation ${\sf S}^1\sim{\sf S}^2$ if $\sfd_{L^2}({\sf S}^1,{\sf S}^2)=0$.
\end{definition}
It is obvious that the relation indicated in the previous definition  is an equivalence relation, so that $ L^2(u^*\T_G\Y,\mm\restr\Omega)$ is well defined. Also, the quantity $\sfd_{L^2}$ passes to the quotient and defines a distance, still denoted by $\sfd_{L^2}$, on $ L^2(u^*\T_G\Y,\mm\restr\Omega)$ and standard considerations show that the resulting object is a complete metric space.

Now let $\tilde u\colon \Omega\to\Y$ be Borel and $\mm$-a.e.\ equal to $u$ and consider the identification $I\colon \mathcal  L^2(u^*\T_G\Y,\mm\restr\Omega)\to \mathcal L^2(\tilde u^*\T_G\Y,\mm\restr\Omega)$ sending ${\sf S}$ to the section $I({\sf S})$ defined by
\[
I({\sf S})_x:=\left\{\begin{array}{ll}
{\sf S}_x,&\qquad\text{ if }u(x)=\tilde u(x),\\
0_{\tilde u (x)},&\qquad\text{ if }u(x)\neq \tilde u(x).
\end{array}\right.
\]
It is clear that this map passes to the quotients and thus induces a map, still denoted by $I$, from $ L^2(u^*\T_G\Y,\mm\restr\Omega)$ to $L^2(\tilde u^*\T_G\Y,\mm\restr\Omega)$. Also, the fact that $u=\tilde u$ $\mm$-a.e.\ trivially implies  that such $I$ is an isometry.

Thanks to these considerations, it makes sense to consider the space $L^2(u^*\T_G\Y,\mm\restr\Omega)$ for $u\in L^2(\Omega,\Y_{\bar y})$, i.e.\ even when $u$ is only given up to $\mm$-a.e.\ equality: it is just sufficient to pick any Borel representative of $u$, consider the corresponding space of $L^2$-sections up to $\mm$-a.e.\ equality and notice that such space does not depend on the representative of $u$ chosen.

The basic properties of the space $L^2(u^*\T_G\Y,\mm\restr\Omega)$ are collected in the following statement.
\begin{proposition}[Properties of $L^2(u^*\T_G\Y,\mm\restr\Omega)$]\label{prop:propl2}
Let $(\Y,\sfd_\Y,\bar y)$ be a pointed   \Cat0-space, $(\X,\sfd,\mm)$ a metric measure space, $\Omega\subset\X$ an open subset and $u\in L^2(\Omega,\Y_{\bar y})$. 

Then:
\begin{itemize}
\item[$(i)$] For every ${\sf S}^1,{\sf S}^2\in L^2(u^*\T_G\Y,\mm\restr\Omega)$ the functions $\Omega\ni x\mapsto \sfd_{u(x)}({\sf S}^1_x,{\sf S}^2_x), |{\sf S}^1_x|_{u(x)},\la{\sf  S}^1_x,{\sf S}^2_x\ra_{u(x)}$ are (equivalence classes up to $\mm$-a.e.\ equality of) Borel functions.
\item[$(ii)$]  For every ${\sf S}^1,{\sf S}^2\in L^2(u^*\T_G\Y,\mm\restr\Omega)$ the section ${\sf S}^1\oplus{\sf S^2}$   given by  the (equivalence class of the) map $x\mapsto (x,u(x),{\sf S}^1_x\oplus{\sf S}^2_x)$ belongs to $L^2(u^*\T_G\Y,\mm\restr\Omega)$.
\item[$(iii)$] For every ${\sf S}\in L^2(u^*\T_G\Y,\mm\restr\Omega)$ and $f\in L^\infty(\Omega)$ the  section $f{\sf S}$ given by the (equivalence class of the) map $x\mapsto (x,u(x),f(x){\sf S}^1_x)$  belongs to $L^2(u^*\T_G\Y,\mm\restr\Omega)$.
\end{itemize}
\end{proposition}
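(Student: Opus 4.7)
The plan is to verify each claim by approximating the given $L^2$-sections via the simple sections provided by Definition \ref{def:l2se1}(ii), using Lemma \ref{le:l2s} as the workhorse for Borel regularity and the continuity of the tangent-cone operations from Proposition \ref{prop:hilbertine} to transfer that regularity to pointwise limits; throughout, the equivalence-class structure built into Definition \ref{def:l2se2} grants freedom to modify sections on $\mm$-null sets. For (i), pick Borel representatives of ${\sf S}^1,{\sf S}^2$ and simple approximations ${\sf S}^{i,n}_x := \alpha_{i,n}(\G_{u(x)}^{v_{i,n}(x)})'_0$ with $\sfd_{u(x)}({\sf S}^{i,n}_x,{\sf S}^i_x)\to 0$ pointwise. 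Lemma \ref{le:l2s} makes each $x\mapsto\sfd_{u(x)}({\sf S}^{1,n}_x,{\sf S}^{2,n}_x)$ Borel, and the pointwise limit yields Borel regularity of $\sfd_{u(x)}({\sf S}^1_x,{\sf S}^2_x)$. The norm $|{\sf S}^1_x|_{u(x)}$ follows by specializing ${\sf S}^2\equiv{\sf 0}$, and the scalar product from the polarization $\la v,w\ra_y=\tfrac12(|v|_y^2+|w|_y^2-\sfd_y^2(v,w))$.

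For (ii), the pointwise bound $|{\sf S}^1_x\oplus{\sf S}^2_x|^2_{u(x)}\leq 2(|{\sf S}^1_x|_{u(x)}^2+|{\sf S}^2_x|_{u(x)}^2)$ from \eqref{eq:PI} gives the $L^2$-bound. For (i) of Definition \ref{def:l2se1}, use a double pointwise limit: by continuity of $\oplus$, ${\sf S}^{1,n}_x\oplus{\sf S}^{2,n}_x\to {\sf S}^1_x\oplus{\sf S}^2_x$, and by \eqref{eq:sumexpl} each ${\sf S}^{1,n}_x\oplus{\sf S}^{2,n}_x$ equals $\lim_{\epsilon\downarrow 0}\tfrac2\epsilon(\G_{u(x)}^{m_\epsilon^n(x)})'_0$ with $m_\epsilon^n(x)$ a continuous function of Borel data, hence Borel; Lemma \ref{le:l2s} then applies at each stage. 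For (ii) of Definition \ref{def:l2se1}, diagonalize the double-indexed simple sections ${\sf T}^{n,k}_x:=\tfrac2{\epsilon_k}(\G_{u(x)}^{m_{\epsilon_k}^n(x)})'_0$: uniform $L^2$-bounds (via \eqref{eq:PI} and triangle inequality) give $L^2$-convergence of a suitable diagonal to ${\sf S}^1\oplus{\sf S}^2$, from which a subsequence converges $\mm$-a.e., and finally modify on an $\mm$-null Borel set to obtain everywhere pointwise convergence.

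For (iii), assuming $f\geq 0$ (as required for the cone operation to be defined), the $L^2$-bound is immediate from $|f(x){\sf S}_x|_{u(x)}=f(x)|{\sf S}_x|_{u(x)}$ via \eqref{eq:norm}; (i) of Definition \ref{def:l2se1} follows exactly as in claim (i) after observing that the proof of Lemma \ref{le:l2s} extends verbatim to Borel scalar coefficients (joint Borel measurability of $(x,s)\mapsto(\G_{u(x)}^{v(x)})_s$ in $s$ and $x$ is all that is used); (ii) is handled by first approximating $f$ by Borel step functions $f_k$, so that on each piece of the defining partition $f_k{\sf S}^n$ absorbs the constant scalar back into the simple-section form, and then running the same $L^2$-diagonal plus $\mm$-a.e.\ extraction plus null-set modification as in (ii).

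The principal technical hurdle throughout is condition (ii) of Definition \ref{def:l2se1}: the pointwise-everywhere requirement cannot be obtained directly by diagonal extraction from double-indexed approximations, because the pointwise rates of convergence depend on $x$. I would sidestep this by using the uniform $L^2$-bounds to pass to an $L^2$-convergent diagonal, extracting an $\mm$-a.e.\ convergent subsequence by standard real-analysis arguments, and then modifying the approximating sections on an $\mm$-null Borel set---an operation legitimate precisely because $L^2(u^*\T_G\Y,\mm\restr\Omega)$ is defined as a quotient modulo $\mm$-null differences.
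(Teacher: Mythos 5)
Your overall strategy matches the paper's: (i) by pointwise limits of the Borel functions supplied by Lemma \ref{le:l2s} plus polarization, (ii) by first treating sections of the form $\alpha(\G_{u(\cdot)}^{v(\cdot)})'_0$ via the midpoint construction \eqref{eq:sumexpl} and then approximating, and (iii) by reducing to bounded Borel $f\geq 0$ (you are right that the statement implicitly requires $f\geq 0$; the paper's proof does too). Two points need repair. First, in (ii) the intermediate claim of ``$L^2$-convergence of a suitable diagonal to ${\sf S}^1\oplus{\sf S}^2$'' does not follow from what you have: requirement $(ii)$ of Definition \ref{def:l2se1} gives pointwise-everywhere convergence plus a uniform $L^2$ bound, which do \emph{not} imply $\sfd_{L^2}\big(\tfrac12({\sf S}^{1,n}\oplus{\sf S}^{2,n}),\tfrac12({\sf S}^1\oplus{\sf S}^2)\big)\to 0$ (mass can escape). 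What you can say is that, for fixed $n$, the bound $|{\sf M}^n_{t,\cdot}|_{u(\cdot)}\leq 2\big(\alpha_n\sfd_\Y(u,v_n)+\beta_n\sfd_\Y(u,w_n)\big)$ is uniform in $t$, so dominated convergence yields $t_n$ with $\sfd_{L^2}\big({\sf M}^n_{t_n},\tfrac12({\sf S}^{1,n}\oplus{\sf S}^{2,n})\big)\leq 1/n$; along a subsequence this fiberwise distance vanishes $\mm$-a.e., and combined with the \emph{everywhere} convergence of $\tfrac12({\sf S}^{1,n}\oplus{\sf S}^{2,n})_x$ to $\tfrac12({\sf S}^1\oplus{\sf S}^2)_x$ (continuity of $\oplus$) this gives the $\mm$-a.e.\ convergence you need before the null-set modification. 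With that correction your handling of the ``for all $x$'' clause is sound, and is in fact more explicit than the paper, which dismisses the general case with ``follows by approximation and the continuity of the sum''.

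Second, in (iii) the step-function route does not by itself produce approximants of the form demanded by Definition \ref{def:l2se1}$(ii)$, namely $\alpha_n(\G_{u(\cdot)}^{v_n(\cdot)})'_0$ with a \emph{single} constant $\alpha_n$: absorbing a piecewise-constant scalar ``into the simple-section form'' leaves different constants on different pieces of the partition, which is not an admissible approximant. The fix --- which is exactly the paper's mechanism, and which lets you skip the step-function approximation of $f$ altogether --- is to reparametrize the geodesic: setting $w_n(x):=(\G_{u(x)}^{v_n(x)})_{f(x)/\|f\|_{L^\infty}}$ one has $f(x){\sf S}^n_x=\alpha_n\|f\|_{L^\infty}(\G_{u(x)}^{w_n(x)})'_0$, which is of the required form with the single constant $\alpha_n\|f\|_{L^\infty}$, and $w_n$ is Borel, essentially separably valued and in $L^2(\Omega,\Y_{\bar y})$. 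Everything else in your proposal is correct.
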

\begin{proof} The Borel regularity of $ x\mapsto \sfd_{u(x)}({\sf S}^1_x,{\sf S}^2_x)$ has already been noticed. Then the one of $|{\sf S}^1_x|_{u(x)}$ follows from the fact that the zero section ${\sf 0}$ belongs to $L^2(u^*\T_G\Y,\mm\restr\Omega)$ and thus the one of $\la{\sf  S}^1_x,{\sf S}^2_x\ra_{u(x)}$ follows by the definition of scalar product.

We pass to $(ii)$ and first consider the case of  ${\sf S}^1,{\sf S}^2\in \mathcal L^2(u^*\T_G\Y,\mm\restr\Omega)$ of the form ${\sf S}^1_x=\alpha(\G_{u(x)}^{v(x)})'_0$ and ${\sf S}^2_x=\beta(\G_{u(x)}^{w(x)})'_0$ for $v,w\colon \Omega\to\Y$ Borel and essentially separably valued and $\alpha,\beta\geq 0$. Put $T:=\min\{\alpha^{-1},\beta^{-1}\}\in(0,\infty]$ and for $t\in(0,T)$ put $v_t:=(\G_u^v)_{\alpha t}$, $w_t:=(\G_u^w)_{\beta t}$ and let $m_t(x)$ be the midpoint of $v_t(x),w_t(x)$ for every $x\in\Omega$. From the continuity of the `midpoint' operation and the triangle inequality it easily follows that $x\mapsto m_t(x)$ is Borel, essentially separably valued and in $L^2(\Omega,\Y_{\bar y})$. Then, define the section ${\sf M}_t$ as  ${\sf M}_{t,x}:=\frac1t(\G_{u(x)}^{m_t(x)})'_0$ and  recall \eqref{eq:sumexpl} to see that ${\sf M}_{t,x}\to \frac12({\sf S}^1\oplus {\sf S}^2)_x$ in $\T_{u(x)}\Y$ as $t\downarrow0$ for every $x\in\Omega$: this proves that $\frac12({\sf S}^1\oplus {\sf S}^2)$ satisfies the requirement $(i)$ in Definition \ref{def:l2se1}. The same convergence together with the bound
\[
\begin{split}
|{\sf M}_{t,\cdot}|_{u(\cdot)}&\leq\tfrac1t\sfd_\Y(u,m_{t}) \leq \tfrac2t\big(\sfd_\Y(u,v_{t})+\sfd_\Y(u,w_{t}) \big)\leq 2\big(\alpha\sfd_\Y(u,v)+\beta\sfd_\Y(u,w) \big)\quad\text{on }\Omega
\end{split}
\]
valid for every $t\in(0,T)$ shows that $\frac12({\sf S}^1\oplus {\sf S}^2)$ satisfies also the requirement $(ii)$ in Definition \ref{def:l2se1}. 

Now the fact that $\frac12({\sf S}^1\oplus {\sf S}^2)\in \mathcal L^2(u^*\T_G\Y,\mm\restr\Omega)$  for generic ${\sf S}^1,{\sf S}^2\in \mathcal L^2(u^*\T_G\Y,\mm\restr\Omega)$ follows by approximation (recall point $(ii)$ in Definition \ref{def:l2se1}) and the continuity of the `sum' operation noticed in Proposition \ref{prop:hilbertine}, then the analogous properties for elements of $ L^2(u^*\T_G\Y,\mm\restr\Omega)$ trivially follow.

Finally, the fact that $\frac12({\sf S}^1\oplus {\sf S}^2)\in  L^2(u^*\T_G\Y,\mm\restr\Omega)$ implies ${\sf S}^1\oplus {\sf S}^2\in  L^2(u^*\T_G\Y,\mm\restr\Omega)$ is trivial from the definitions (see also the arguments below).

For $(iii)$  we notice that it is sufficient to prove that $f{\sf S}$ is in $\mathcal L^2(u^*\T_G\Y,\mm\restr\Omega)$ whenever   $f\colon \Omega\to\R$ is Borel and bounded and ${\sf S}\in\mathcal L^2(u^*\T_G\Y,\mm\restr\Omega)$. In this case  the   fact that $f{\sf S}$ satisfies the requirement $(i)$  in Definition \ref{def:l2se1} is obvious. For $(ii)$ we consider sections ${\sf S}^n_x= \alpha_n(\G_{u(x)}^{v_n(x)})'_0$ for which  \eqref{eq:hl2} hold and put $\tilde {\sf S}^n_x:=\alpha_n \|f\|_{L^\infty}(\G_{u(x)}^{w_n(x)})'_0$, where   $w_n(x):=(\G_{u(x)}^{v_n(x)})_{f(x)/\|f\|_{L^\infty}}$. The fact that the $w_n$'s are Borel representatives of maps in $L^2(\Omega,\Y_{\bar y})$ can be easily checked from the definition while  the fact that \eqref{eq:hl2} holds for $f{\sf S}$ and $(\tilde{\sf S}^n)$ is obvious. 
\end{proof}

Let us now come back to the initial discussion and, for given $u\in L^2(\Omega,\Y_{\bar y})$, let us define the map $\iota\colon \Geo_uL^2(\Omega,\Y_{\bar y})\to L^2(u^*\T_G\Y,\mm\restr\Omega)$ as follows. For  $v\in L^2(\Omega,\Y_{\bar y})$ and  $\alpha\geq 0$ we send the geodesic $t\mapsto (\G_u^v)_{\alpha t}$  to the (equivalence class of the) section given by $x\mapsto \alpha(\G^{v(x)}_{u(x)})'_{0}$. The relation between $\T_uL^2(\Omega,\Y_{\bar y})$ and $L^2(u^*\T_G\Y,\mm\restr\Omega)$ is then described by the following result:
\begin{proposition}[$L^2(u^*\T_G\Y,\mm\restr\Omega)$ and $\T_uL^2(\Omega,\Y_{\bar y})$]\label{prop:l2gen}
Let $(\Y,\sfd_\Y,\bar y)$ be a pointed   \Cat0-space, $(\X,\sfd,\mm)$ a metric measure space, $\Omega\subset\X$ an open subset and $u\in L^2(\Omega,\Y_{\bar y})$. 

Then, the map $\iota\colon \Geo_uL^2(\Omega,\Y_{\bar y})\to L^2(u^*\T_G\Y,\mm\restr\Omega)$ passes to the quotient and induces a map, still denoted $\iota$, from $\Geo_uL^2(\Omega,\Y_{\bar y})/\sim$ to  $L^2(u^*\T_G\Y,\mm\restr\Omega)$ that can be uniquely extended by continuity to a bijective isometry, again denoted $\iota$, from $\T_uL^2(\Omega,\Y_{\bar y})$ to $L^2(u^*\T_G\Y,\mm\restr\Omega)$.

Moreover, the so defined  isometry $\iota$  respects the operations on the tangent cones, i.e.
\begin{subequations}
\begin{align}
\label{eq:e1}
|{\sf v}|_u^2&=\int_\Omega |\iota({\sf v})_x|^2_{u(x)}\,\d\mm(x),\\
\label{eq:e2}
\la {\sf v}_1,{\sf v}_2\ra_u&=\int_\Omega \la \iota({\sf v}_1)_x,\iota({\sf v}_2)_x\ra_{u(x)}\,\d\mm(x),\\
\label{eq:e3}
\sfd^2_u( {\sf v}_1,{\sf v}_2)&=\int_\Omega \sfd^2_{u(x)}( \iota({\sf v}_1)_x,\iota({\sf v}_2)_x)\,\d\mm(x),\\
\label{eq:e4}
\iota(\lambda {\sf v})&=\lambda\iota({\sf v}),\\
\label{eq:e5}
\iota({\sf v}_1\oplus {\sf v}_2)&=\iota({\sf v}_1)\oplus\iota({\sf v}_2),
\end{align}
\end{subequations}
for any ${\sf v},{\sf v}_1,{\sf v}_2\in \T_uL^2(\Omega,\Y_{\bar y})$ and $\lambda\in\R^+$.
\end{proposition}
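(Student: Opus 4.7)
The plan is to define $\iota$ first on the dense subset of simple geodesic derivatives, establish the isometric identity there by exchanging a limit with an integral, extend by continuity, and finally address surjectivity by a pointwise truncation argument.

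The key computation is that for $\gamma=(\G_u^v)_{\alpha\cdot}$ and $\eta=(\G_u^w)_{\beta\cdot}$ in $\Geo_u L^2(\Omega,\Y_{\bar y})$ one has
\[
\sfd_u^2(\gamma,\eta)=\lim_{t\downarrow0}\int_\Omega\frac{\sfd_\Y^2(\gamma_t(x),\eta_t(x))}{t^2}\,\d\mm(x)=\int_\Omega\sfd_{u(x)}^2\bigl(\iota(\gamma)_x,\iota(\eta)_x\bigr)\,\d\mm(x).
\]
The exchange of limit and integral is justified via \eqref{eq:mondis}: since $\Y$ is \Cat0, the ratio $t\mapsto\sfd_\Y(\gamma_t(x),\eta_t(x))/t$ is non-decreasing in $t$, and the triangle inequality together with constant speed of $\gamma_t,\eta_t$ bound it by $\alpha\sfd_\Y(u(x),v(x))+\beta\sfd_\Y(u(x),w(x))\in L^2(\mm)$, so dominated convergence applies. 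This identity shows that $\iota$ passes to the quotient and is isometric on $\Geo_uL^2/\sim$; since the target $L^2(u^*\T_G\Y,\mm\restr\Omega)$ is complete and $\Geo_uL^2/\sim$ is dense in $\T_uL^2$, $\iota$ extends uniquely to an isometric map $\T_uL^2\to L^2(u^*\T_G\Y,\mm\restr\Omega)$.

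For the compatibility identities: \eqref{eq:e1} and \eqref{eq:e3} follow immediately from the isometric identity above (taking the trivial geodesic at $u$ for the norm) and its continuous extension; \eqref{eq:e2} is deduced from these via the polarization formula defining the scalar product; \eqref{eq:e4} holds on simple geodesic derivatives by the very definition of scalar multiplication on tangent cones and extends by continuity of $(\lambda,\cdot)$ (Proposition \ref{prop:hilbertine}); finally, \eqref{eq:e5} relies on \eqref{eq:sumexpl}, noting that midpoints of geodesics in $L^2(\Omega,\Y_{\bar y})$ are computed pointwise, and extending by continuity of $\oplus$.

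The main obstacle is surjectivity. Given ${\sf S}\in L^2(u^*\T_G\Y,\mm\restr\Omega)$, Definition \ref{def:l2se1}(ii) provides simple sections ${\sf S}^n_x=\alpha_n(\G_{u(x)}^{v_n(x)})'_0$ with $\sup_n\sfd_{L^2}({\sf S}^n,{\sf 0})<\infty$ and ${\sf S}^n_x\to{\sf S}_x$ pointwise, but only pointwise (not $L^2$) convergence is guaranteed. To upgrade to $L^2$-convergence I truncate pointwise: set $\rho_n(x):=\min\bigl(2|{\sf S}_x|_{u(x)}/|{\sf S}^n_x|_{u(x)},\,1\bigr)$ (with the convention $\rho_n(x):=1$ when $|{\sf S}^n_x|_{u(x)}=0$) and $\hat v_n(x):=(\G_{u(x)}^{v_n(x)})_{\rho_n(x)}$. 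Then $\hat v_n$ is Borel, essentially separably valued, and in $L^2(\Omega,\Y_{\bar y})$, so $\hat{\sf S}^n:=\alpha_n(\G_u^{\hat v_n})'_0$ is a simple section lying in $\iota(\T_uL^2)$ and coinciding pointwise with $\rho_n(x){\sf S}^n_x$. By construction $|\hat{\sf S}^n_x|_{u(x)}\le 2|{\sf S}_x|_{u(x)}$ everywhere, with the dominating function $2|{\sf S}|$ in $L^2(\mm)$, and $\hat{\sf S}^n_x\to{\sf S}_x$ pointwise (trivially where ${\sf S}_x=0$, since both sides vanish; and where ${\sf S}_x\ne 0$, because then $\rho_n(x)=1$ eventually). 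Dominated convergence yields $\sfd_{L^2}(\hat{\sf S}^n,{\sf S})\to 0$, and since $\iota(\T_uL^2)$ is closed in $L^2(u^*\T_G\Y,\mm\restr\Omega)$ (as isometric image of a complete space) we conclude ${\sf S}\in\iota(\T_uL^2)$, completing surjectivity.
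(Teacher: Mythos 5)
Your proposal is correct and follows essentially the same route as the paper: the same limit--integral exchange (justified by the monotonicity \eqref{eq:mondis} plus the pointwise bound $\alpha\sfd_\Y(u,v)+\beta\sfd_\Y(u,w)$) gives the isometric identity on $\Geo_uL^2/\sim$, and the identities \eqref{eq:e1}--\eqref{eq:e5} are obtained by the same continuity/polarization/midpoint considerations. The only place you go beyond the paper is surjectivity, which the paper dispatches as ``an approximation argument based on requirement $(ii)$ of Definition \ref{def:l2se1}'': you correctly observe that $(ii)$ only yields pointwise convergence with a uniform $L^2$ bound, and your truncation $\hat v_n(x)=(\G_{u(x)}^{v_n(x)})_{\rho_n(x)}$, which realizes $\rho_n(x){\sf S}^n_x$ as $\iota$ of a genuine geodesic in $L^2(\Omega,\Y_{\bar y})$ and makes dominated convergence applicable, is a valid way to carry that step out.
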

\begin{proof} Let $v^1,v^2\in L^2(\Omega,\Y_{\bar y})$, $\alpha_1,\alpha_2\geq 0$, consider the sections ${\sf S}^1,{\sf S}^2\in L^2(u^*\T_G\Y,\mm\restr\Omega)$ given by ${\sf S}^i:=\iota(\alpha_i(\G_{u}^{v^i})'_0)$. Notice that
\[
\begin{split}
\sfd^2_u\big( \alpha_1(\G_u^{v_1})'_0,\alpha_2(\G_u^{v_2})'_0\big)&=\lim_{t\downarrow0}\frac{\sfd_{L^2}^2\big((\G_u^{v_1})_{\alpha_1t},(\G_u^{v_2})_{\alpha_2t}\big)}{t^2}\\
&=\lim_{t\downarrow0}\int_\Omega\frac{\sfd^2_\Y\big((\G_{u(x)}^{v_1(x)})_{\alpha_1t},(\G_{u(x)}^{v_2(x)})_{\alpha_2t}\big)}{t^2}\,\d\mm(x)\\
&=\int_\Omega\lim_{t\downarrow0}\frac{\sfd^2_\Y\big((\G_{u(x)}^{v_1(x)})_{\alpha_1t},(\G_{u(x)}^{v_2(x)})_{\alpha_2t}\big)}{t^2}\,\d\mm(x)\\
&=\int_\Omega \sfd^2_{u(x)}\big({\sf S}^1_x,{\sf S}^2_x\big)\,\d\mm(x),
\end{split}
\]
where, in passing the limit inside the integral, we used the dominate convergence theorem and the fact that the integrand is non-negative and non-decreasing in $t$ (recall \eqref{eq:mondis}). This proves at once that $\iota$ passes to the quotient to a map on $\Geo_uL^2(\Omega,\Y_{\bar y})/\sim$ and that the so induced map is an isometry which therefore can be extended to a map from $\T_uL^2(\Omega,\Y_{\bar y})$ to $L^2(u^*\T_G\Y,\mm\restr\Omega)$. The fact that such extension is surjective follows from an approximation argument based on the requirement $(ii)$ in Definition \ref{def:l2se1}.


Now observe that  \eqref{eq:e3} has already been proved by the fact that $\iota$ is an isometry. Then \eqref{eq:e1} and \eqref{eq:e2} follow as well. Also, \eqref{eq:e4} is obvious by definition and then \eqref{eq:e5} follows from \eqref{eq:e3}, \eqref{eq:e4} and the metric characterization of the  midpoints of $x,y$ as the point $m$ such that $\sfd^2(x,m)+\sfd^2(m,y)=\sfd^2(x,y)/2$.
\end{proof}


\subsubsection{The separable setting}

In this section we  assume instead that $\Y$ is a separable and locally $\Cat\kappa$-space and we study the Borel structure of the pullback $u^*\T_G\Y$ of the geometric tangent bundle of $\Y$. We shall then see in the space case of $\Y$ being separable and $\Cat0$ how such Borel structure relates to the space $L^2(u^*\T_G\Y,\mm\restr\Omega)$ studied in the previous section.

Thus let  $\Y$ be separable and locally $\Cat\kappa$, $(\X,\sfd)$ be a complete and separable metric space and $\Omega\subset\X$ be open.

As before, for a given Borel map $u\colon \Omega\to\Y$,  the pullback geometric tangent bundle $u^*\T_G\Y$ is defined as 
\[
u^*\T_G\Y:=\big\{(x,y,v)\colon   x\in\Omega,\ y=u(x),\ v\in\T_y\Y\big\}\subset \X\times\T_G\Y
\]
and a section  of $u^*\T_G\Y$ is a map $\sfS \colon  \Omega \rightarrow u^*\T_G\Y$ such that $\sfS_x \in \T_{u(x)}\Y$ for every $x \in \Omega$.

Now equip $u^*\T_G\Y\subset \X\times\T_G\Y$  with the restriction of the product $\sigma$-algebra $\cB(\X)\otimes \cB(\T_G\Y)$, which, with abuse of terminology, we shall call Borel $\sigma$-algebra on $u^*\T_G\Y$ and denote $\cB(u^*\T_G\Y)$.   In particular, we shall say that a section is Borel if it is measurable w.r.t.\ $\cB(\X)$ and $\cB(u^*\T_G\Y)$.

A section is \emph{simple} provided there are a Borel partition $(E_n)$ of $\Omega$, $(\alpha_n)\subset \R^+$ and points $(y_n)\subset\Y$ s.t. $y_n \in B_{r_{u(x)}}(u(x))$, for every $x \in E_n$ and  $\sfS\restr{E_n}=\alpha_n(\G_{u(\cdot)}^{y_n})'_0$. We shall formally denote such section by  $\sum_n\nchi_{E_n}\alpha_nu^*(\G_{\cdot}^{y_n})'_0$. Notice that the restriction of such section to $E_n$ coincides with the (graph of) the composition of $u$ with the simple section of $\T_G\Y$ given by $y\mapsto(y,\alpha_n(\G_{y}^{y_n})'_0)$. In particular, recalling Proposition \ref{prop:sb} we see that simple sections of $u^*\T_G\Y$ are Borel.

Moreover, they are dense in the space of Borel sections:
\begin{lemma}[Density of simple sections]\label{le:denssimp} 
Let $(\X,\sfd)$ be a metric space, $(\Y,\sfd_\Y)$ be separable and locally $\Cat\kappa$-space,  $\Omega\subset\X$ an open subset and $u\colon \Omega\to\Y$ Borel. Let ${\sf S}\colon \Omega\to u^*\T_G\Y$ be a Borel section of $u^*\T_G\Y$ and $\eps>0$. 

Then, there is a simple section ${\sf T}$ such that $\sfd_{u(x)}({\sf S}_x,{\sf T}_x)<\eps$ for every $x\in\Omega$.
\end{lemma}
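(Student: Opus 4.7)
The strategy is a standard exhaustion argument that mimics the proof of density of finite-ranged maps for Bochner integrals, once the right countable family of candidate values is identified. Since $\Y$ is separable, I would fix a countable dense subset $\cD\subset \Y$ and enumerate all pairs in $\Q^+\times\cD$ as $(\alpha_k,w_k)_{k\in\N}$. For each $k$, define
\[
A_k:=\Big\{x\in\Omega\;:\;w_k\in B_{\sfr_{u(x)}}(u(x))\ \text{ and }\ \sfd_{u(x)}\big(\sfS_x,\alpha_k(\G_{u(x)}^{w_k})'_0\big)<\eps\Big\},
\]
then put $E_k:=A_k\setminus\bigcup_{j<k}A_j$ and set $\sfT_x:=\alpha_k(\G_{u(x)}^{w_k})'_0$ for $x\in E_k$. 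By construction $\sfT$ is a simple section (once we show $(E_k)$ is a Borel partition) and $\sfd_{u(x)}(\sfS_x,\sfT_x)<\eps$ on each $E_k$.

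\textbf{Why the $A_k$ cover $\Omega$.} For any fixed $x\in\Omega$ the set $\cD\cap B_{\sfr_{u(x)}}(u(x))$ is dense in $B_{\sfr_{u(x)}}(u(x))$ because $\cD$ is dense in $\Y$, so \eqref{eq:densecone} (applied at the point $u(x)$) gives that $\{\alpha(\G_{u(x)}^{w})'_0\colon\alpha\in\Q^+,\ w\in\cD\cap B_{\sfr_{u(x)}}(u(x))\}$ is dense in $\T_{u(x)}\Y$. In particular, one can find $(\alpha_k,w_k)$ approximating $\sfS_x$ within $\eps$ in $\sfd_{u(x)}$, i.e.\ $x\in A_k$. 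Hence $\bigcup_k A_k=\Omega$, so $(E_k)$ is indeed a partition of $\Omega$.

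\textbf{The main step: Borel measurability of $A_k$.} The condition $w_k\in B_{\sfr_{u(x)}}(u(x))$ amounts to $\sfd_\Y(u(x),w_k)<\sfr_{u(x)}$, which is Borel in $x$ because $u$ is Borel and both $y\mapsto\sfd_\Y(y,w_k)$ and $y\mapsto\sfr_y$ are continuous. For the distance in the tangent cone we need that
\[
x\longmapsto\sfd_{u(x)}\big(\sfS_x,\alpha_k(\G_{u(x)}^{w_k})'_0\big)
\]
is Borel on the Borel set where the second argument is well defined. The section $x\mapsto \alpha_k(\G_{u(x)}^{w_k})'_0$ is a one-piece simple section of $u^*\T_G\Y$, which has already been observed to be Borel (via Proposition \ref{prop:sb} pre-composed with $u$). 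Since $\sfS$ is Borel by hypothesis, measurability of $x\mapsto \sfd_{u(x)}(\sfS_x,\alpha_k(\G_{u(x)}^{w_k})'_0)$ is the exact pullback of the statement of Proposition \ref{prop:bormap} for $\T_G\Y$: both sections are Borel, and Borel-regularity of distance/scalar product passes to the pullback bundle $u^*\T_G\Y$ by composing with the Borel map $u$.

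\textbf{Expected obstacle.} The only delicate point is really the Borel regularity above; once this is secured the rest is bookkeeping. Formally one should verify that distance on the tangent fibres is jointly Borel in a sense that survives pullback along $u$; this is done exactly as in Proposition \ref{prop:sb} by writing $\sfd_{u(x)}(\cdot,\cdot)$ as a countable pointwise limit of Borel quantities built from $\sfd_\Y$ and from $\kappa$-comparison angles, all of which depend continuously on the data (using the continuous dependence of geodesics on their endpoints quoted at \eqref{eq:geocontdep}).
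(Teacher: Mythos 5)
Your proof is correct and follows essentially the same route as the paper's: enumerate $\Q^+\times\cD$, form the $\eps$-approximation sets, disjointify, and invoke \eqref{eq:densecone} for coverage; the paper merely packages the locality issue differently (reducing first to the global $\Cat\kappa$ case via Lindel\"of rather than keeping the condition $w_k\in B_{\sfr_{u(x)}}(u(x))$ inside $A_k$, which also works). One small caution on your measurability step: a Borel section of $u^*\T_G\Y$ need not be the composition of $u$ with a Borel section of $\T_G\Y$ (think of non-injective $u$), so rather than ``pulling back'' Proposition \ref{prop:bormap} you should argue as the paper does, namely that $(y,v)\mapsto \sfd_y(v,\alpha_k(\G_y^{w_k})'_0)=\sqrt{|v|_y^2+\alpha_k^2\sfd_\Y^2(y,w_k)-2\la v,\alpha_k(\G_y^{w_k})'_0\ra_y}$ is Borel on $\T_G\Y$ by the defining generators of $\cB(\T_G\Y)$ and \eqref{eq:normbor}, and then compose with the Borel map $x\mapsto\pi_{\T_G\Y}(\sfS(x))$.
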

\begin{proof}
We can reduce the proof to the case of $\Y$ being $\Cat\kappa$ by using the Lindel\"of property of $\Y$ and the coverings made by $B_{\sfr_y/2}(y)$. Doing so, we achieve uniqueness of geodesics between any couple of points. Let $D\subset\Y$ be countable and dense $(y_n,\alpha_n)$ be an enumeration of $D\times\Q^+$. Then  for every $n\in\N$ consider the function $F_{n}\colon \T_G\Y\to\R$ given by  
\[
F_{n}(y,v):=\sfd_{y}(v,\alpha_n(\G_y^{y_n})'_0)=\sqrt{|v|_y^2+|\alpha_n|^2\sfd^2(y,y_n)-2\la v,\alpha_n(\G_y^{y_n})'_0\ra_y}.
\]
The defining requirements of $\cB(\T_G\Y)$ and the property \eqref{eq:normbor} ensure that $F_{n}$ is Borel. Hence so is the map $\tilde F_n\colon u^*\T_G\Y\to\R$ defined as $\tilde F_n:=F_n\circ \pi_{\T_G\Y}$,  where $\pi_{\T_G\Y}\colon u^*\T_G\Y\subset\X\times \T_G\Y\to \T_G\Y$ is the canonical projection.

Hence given a Borel section ${\sf S}$ of $u^*\T_G\Y$ the map $\tilde F_n\circ {\sf S}\colon \X\to\R$ is Borel and thus, for given $\eps>0$, so is the set $\tilde E_n:=(\tilde F_n\circ {\sf S})^{-1}([0,\eps))$. We then put $E_n:=\tilde E_n\setminus\cup_{i<n}\tilde E_i$ and notice that the property \eqref{eq:densecone} ensures that the $E_n$'s form a partition of $\X$, thus giving the conclusion.
\end{proof}
Thanks to such density result we can show that the operations on the tangent cones preserve Borel regularity. The statement below is similar in spirit to (part of) the statement of Proposition \ref{prop:propl2}, but here no measure is fixed on $\Omega$ and that the sections are defined for every $x\in\Omega$, not for $\mm$-a.e.\ $x$.
\begin{proposition} Let $(\X,\sfd)$ be a metric space, $(\Y,\sfd_\Y)$ be separable and locally $\Cat\kappa$,  $\Omega\subset\X$ an open subset and $u\colon \Omega\to\Y$ a Borel map. Let ${\sf S}^1,{\sf S}^2$ be Borel sections of $u^*\T_G\Y$ and $f\colon \X\to\R^+$ be a Borel map. 

Then the functions sending $x\in\X$ to $|{\sf S}^2_x|_{u(x)},\la {\sf S}^1_x,{\sf S}^2_x\ra_{u(x)},\sfd_{u(x)}({\sf S}^1_x,{\sf S}^2_x)$ are Borel and the sections $x\mapsto f(x){\sf S}^1_x,{\sf S}^1_x\oplus {\sf S}^2_x$ are Borel as well.
\end{proposition}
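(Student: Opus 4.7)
The strategy is to reduce every claim to the corresponding statement on $\T_G\Y$ itself (Proposition \ref{prop:bormap}) by exploiting the fact that a Borel section ${\sf S}$ of $u^*\T_G\Y$ is nothing but a Borel map $\hat{\sf S}\colon\Omega\to\T_G\Y$ with $\pi_\Y\circ\hat{\sf S}=u$, so that any Borel function on $\T_G\Y$ composed with $\hat{\sf S}$ yields a Borel function on $\Omega$. Under this correspondence, the Borel measurability of $x\mapsto|{\sf S}^2_x|_{u(x)}$ is immediate from \eqref{eq:normbor}.

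For the scalar product (and for the distance via $\sfd_{u(x)}^2({\sf S}^1_x,{\sf S}^2_x)=|{\sf S}^1_x|^2_{u(x)}+|{\sf S}^2_x|^2_{u(x)}-2\la{\sf S}^1_x,{\sf S}^2_x\ra_{u(x)}$), I would first handle the case of simple sections: on each piece of a common refinement of the partitions defining the two simple sections, both sections have the form $\alpha(\G_{u(\cdot)}^y)'_0$ and $\beta(\G_{u(\cdot)}^z)'_0$. Each is the pullback by $u$ of a simple Borel section of $\T_G\Y$ (Proposition \ref{prop:sb}), so by Proposition \ref{prop:bormap} their scalar product defines a Borel function on $\Y$, and precomposition with the Borel map $u$ provides Borel measurability in $x$ on that piece, hence globally. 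For general ${\sf S}^1,{\sf S}^2$, I would use Lemma \ref{le:denssimp} to find simple sections $({\sf T}^{i,n})_n$ with $\sup_x\sfd_{u(x)}({\sf S}^i_x,{\sf T}^{i,n}_x)<1/n$. By the continuity of the scalar product stated in Proposition \ref{prop:hilbertine}, the Borel functions $x\mapsto\la{\sf T}^{1,n}_x,{\sf T}^{2,n}_x\ra_{u(x)}$ converge pointwise to $\la{\sf S}^1_x,{\sf S}^2_x\ra_{u(x)}$, which is therefore Borel.

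For scalar multiplication and for the sum, one must verify that the resulting section is Borel. Since $\cB(u^*\T_G\Y)$ is inherited from $\cB(\X)\otimes\cB(\T_G\Y)$ and $\pi_\X\circ{\sf S}$ is the identity on $\Omega$ for any section, it suffices to check that $\pi_{\T_G\Y}\circ(f{\sf S}^1)$ and $\pi_{\T_G\Y}\circ({\sf S}^1\oplus{\sf S}^2)$ are Borel as maps $\Omega\to\T_G\Y$; by the very definition of $\cB(\T_G\Y)$, this in turn reduces to checking the Borel measurability of their composition with $\pi_\Y$ (equal to the Borel map $u$) and with the generating maps $(y,v)\mapsto\la v,(\G_y^z)'_0\ra_y$. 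For scalar multiplication, homogeneity \eqref{eq:prhom} gives $\la f(x){\sf S}^1_x,(\G_{u(x)}^z)'_0\ra_{u(x)}=f(x)\la{\sf S}^1_x,(\G_{u(x)}^z)'_0\ra_{u(x)}$, and both factors are Borel in $x$ (the second by the definition of $\cB(\T_G\Y)$ applied to $\hat{\sf S}^1$).

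For the sum, the approximation scheme used in the previous paragraph applies: on a common refinement of the defining partitions, the pointwise sum of two simple sections coincides with the pullback by $u$ of the $\oplus$ of two simple Borel sections of $\T_G\Y$, which is Borel by Proposition \ref{prop:bormap}; for general ${\sf S}^1,{\sf S}^2$, one approximates again through Lemma \ref{le:denssimp} and uses the continuity of $\oplus$ from Proposition \ref{prop:hilbertine} together with the continuity of $\la\cdot,(\G_{u(x)}^z)'_0\ra_{u(x)}$ to realise $x\mapsto\la{\sf S}^1_x\oplus{\sf S}^2_x,(\G_{u(x)}^z)'_0\ra_{u(x)}$ as the pointwise limit of Borel functions. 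The main obstacle here is precisely that $\oplus$ is not linear and, unlike in the Hilbert case, cannot be expressed algebraically in terms of the generating scalar products (one only has the concavity inequality \eqref{eq:concav}); this forces the two-step simple-section-plus-limit argument and a careful use of the fact that $\T_G\Y$ carries only a countably generated $\sigma$-algebra, whose generators nevertheless interact continuously with the metric structure on each tangent cone fibre.
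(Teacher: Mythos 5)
Your proposal is correct and follows essentially the same route as the paper: reduce simple sections to compositions of $u$ with simple Borel sections of $\T_G\Y$ so that Proposition \ref{prop:bormap} applies, then pass to general Borel sections via the density result of Lemma \ref{le:denssimp} combined with the fiberwise continuity of the operations from Proposition \ref{prop:hilbertine}. The only difference is that you spell out explicitly (and correctly) the verification that $f{\sf S}^1$ and ${\sf S}^1\oplus{\sf S}^2$ are Borel sections by unwinding the generators of $\cB(\T_G\Y)$, a step the paper leaves implicit.
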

\begin{proof} Let ${\sf S}^1,{\sf S}^2$ be simple of the form: ${\sf S}^1=\nchi_{E_1}\alpha \, u^*(\G_{\cdot}^{y_1})'_0$ and ${\sf S}^2=\nchi_{E_2}\beta\, u^*(\G_{\cdot}^{y_2})'_0$, with $E_i:=u^{-1}(A_i)$ and $A_i \in \cB(\Y)$ such that $y_i \in B_{\sfr_y}(y)$ for every $y \in A_i,i=1,2$. Then they are the (graph of the) composition of $u$ with the simple sections of $\T_G\Y$ given by $\nchi_{A_1}\alpha \,(\G_{\cdot}^{y_1})'_0$ and $\nchi_{A_2}\beta\, (\G_{\cdot}^{y_2})'_0$ respectively, hence in this case the conclusion comes from Proposition \ref{prop:bormap}.

Then the conclusion comes from the `fiberwise' continuity of all the expressions considered (granted by Proposition \ref{prop:hilbertine}) and the density of simple sections established in Lemma \ref{le:denssimp} above.
\end{proof}

We now come to the relation between the space of (equivalence classes up to $\mm$-a.e.\ equality of) Borel sections of $u^*\T_G\Y$ and the space $L^2(u^*\T_G\Y,\mm\restr\Omega)$ in the case where $\Y$ is separable and $\Cat0$. As expected, these spaces coincide when the right integrability of the first ones is in place:
\begin{proposition}\label{prop:link} 
Let $(\X,\sfd,\mm)$ be a metric measure space, $(\Y,\sfd_\Y,\bar y)$ be a pointed separable \Cat0-space,  $\Omega\subset\X$ an open subset and $u\colon \Omega\to\Y$ be a Borel map.

Then, ${\sf S}\in L^2(u^*\T_G\Y,\mm\restr\Omega)$  if and only if it is the equivalence class up to $\mm$-a.e.\ equality of a Borel section ${\sf T}$ of $u^*\T_G\Y$ with $\int_\Omega|{\sf T}|^2_{u(x)}\,\d\mm(x)<\infty$.
\end{proposition}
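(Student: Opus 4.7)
\emph{Forward direction.} Take ${\sf S}\in L^2(u^*\T_G\Y,\mm\restr\Omega)$ and an approximating sequence ${\sf S}^n_x=\alpha_n(\G_{u(x)}^{v_n(x)})'_0$ as in Definition \ref{def:l2se1}(ii), converging to ${\sf S}_x$ pointwise in $\sfd_{u(x)}$ and with $\sup_n\sfd_{L^2}({\sf S}^n,{\sf 0})<\infty$. First I would check that each ${\sf S}^n$ is a Borel section of $u^*\T_G\Y$ in the separable sense: $\pi_\Y\circ{\sf S}^n=u$ is Borel by hypothesis, and the scalar products $x\mapsto\la{\sf S}^n_x,(\G_{u(x)}^z)'_0\ra_{u(x)}$ are Borel, arguing as in the proofs of Proposition \ref{prop:sb} and Lemma \ref{le:l2s}. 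Since pointwise convergence in $\sfd_{u(x)}$ forces convergence of all such scalar products via the polarization identity, the function $x\mapsto\la{\sf S}_x,(\G_{u(x)}^z)'_0\ra_{u(x)}$ is a pointwise limit of Borel maps and hence Borel, so ${\sf S}$ itself is a Borel section. Fatou's lemma applied to $|{\sf S}_x|_{u(x)}=\lim_n|{\sf S}^n_x|_{u(x)}$ yields $\int|{\sf S}|^2\,\d\mm<\infty$.

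\emph{Reverse direction, first step: simple sections are $L^2$.} The crucial preliminary is that any simple section ${\sf T}=\sum_k\nchi_{E_k}\alpha_k u^*(\G_\cdot^{y_k})'_0$ with finite $L^2$-norm already lies in $L^2(u^*\T_G\Y,\mm\restr\Omega)$. For each $N$ I would truncate to the first $N$ pieces, set the section to $0_{u(\cdot)}$ elsewhere, and rewrite the truncation as $A_N(\G_{u(\cdot)}^{v_N})'_0$ with $A_N:=\max_{k\le N}\alpha_k$ and
\[
v_N(x):=\begin{cases}(\G_{u(x)}^{y_k})_{\alpha_k/A_N}&\text{if }x\in E_k\text{ for some }k\le N,\\ u(x)&\text{otherwise}.\end{cases}
\]
Since $\alpha_k/A_N\in[0,1]$ the reparametrization makes sense, $v_N$ is Borel by continuity of geodesics in their endpoints, and \eqref{eq:norm} yields the claimed identity. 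The uniform bound $\int|A_N(\G_u^{v_N})'_0|^2\,\d\mm\leq\int|{\sf T}|^2\,\d\mm$ together with the pointwise convergence as $N\to\infty$ verify condition (ii) of Definition \ref{def:l2se1}, while condition (i) follows from the Borel regularity of the distance between Borel sections, established just before Proposition \ref{prop:link}.

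\emph{Reverse direction, general case.} Given a Borel ${\sf T}$ with $\int|{\sf T}|^2\,\d\mm<\infty$, fix $x_0\in\X$ and set $\Omega_M:=\{|{\sf T}|_{u(\cdot)}\le M\}\cap B_M(x_0)$, which is Borel, has finite $\mm$-measure (as $\mm$ is finite on bounded sets), and increases to $\Omega$ up to an $\mm$-null set. Dominated convergence gives $\nchi_{\Omega_M}{\sf T}\to{\sf T}$ in $\sfd_{L^2}$. Applying Lemma \ref{le:denssimp} to $\nchi_{\Omega_M}{\sf T}$ with $\eps_M:=(M\sqrt{1+\mm(\Omega_M)})^{-1}$ and restricting the output to $\Omega_M$ (keeping it zero elsewhere) produces a simple section ${\sf S}^M$ with $\sfd_{L^2}({\sf S}^M,\nchi_{\Omega_M}{\sf T})\le\eps_M\sqrt{\mm(\Omega_M)}\to 0$, so that ${\sf S}^M\to{\sf T}$ in $\sfd_{L^2}$. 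By the previous step each ${\sf S}^M$ lies in $L^2(u^*\T_G\Y,\mm\restr\Omega)$, so the completeness of this space forces ${\sf T}$ (as an $\mm$-a.e.\ equivalence class) to belong to it as well.

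The main obstacle is the first step of the reverse direction: a simple section in the separable-setting sense carries a \emph{countable} Borel partition with a priori \emph{unbounded} scalars $(\alpha_k)_k$, while Definition \ref{def:l2se1}(ii) demands a single constant scalar $\alpha_n$ at each stage of the approximation. Truncation to finitely many pieces makes the scalars bounded, after which they can be folded into the Borel map $v_N$ by reparametrizing along the geodesics $\G_{u(\cdot)}^{y_k}$; this works cleanly precisely because the required parameters $\alpha_k/A_N$ all lie in $[0,1]$, so no issue of geodesic extension arises in the $\Cat 0$ setting.
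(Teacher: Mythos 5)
Your proof is correct and follows essentially the same route as the paper's: the forward direction reduces to checking measurability of the scalar products $x\mapsto\la{\sf S}_x,(\G_{u(x)}^z)'_0\ra_{u(x)}$, and the reverse direction hinges on exactly the paper's trick of rewriting a finite truncation of a simple section as $\beta_i(\G_u^{v^i})'_0$ by reparametrizing each geodesic with the parameter $\alpha_k/\max_{n\le i}\alpha_n\in[0,1]$, followed by the density of simple sections from Lemma \ref{le:denssimp}. The only difference is cosmetic: you obtain Borel regularity of the limit via polarization and pointwise limits rather than directly from requirement $(i)$ of Definition \ref{def:l2se1}, and you spell out the final approximation (truncation to finite-measure sets plus completeness of $L^2(u^*\T_G\Y,\mm\restr\Omega)$) which the paper leaves implicit.
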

\begin{proof}
Assume at first that ${\sf S}\in L^2(u^*\T_G\Y,\mm\restr\Omega)$. Then the fact that $\int_\Omega|{\sf S}_x|^2_{u(x)}\,\d\mm(x)<\infty$ is a direct consequence of the definition and of Proposition \ref{prop:l2gen} above, thus we only need to prove that ${\sf S}$ is the equivalence class up to $\mm$-a.e.\ equality of a Borel section of $u^*\T_G\Y$. To see this we need to prove that, letting $\pi_\X,\pi_{\T_G\Y}$ be the projections of $u^*\T_G\Y\subset \X\times \T_G\Y$ to $\X,\T_G\Y$ respectively, the maps $\pi_\X\circ{\sf S}$ and $\pi_{\T_G\Y}\circ{\sf S}$ are equivalence classes up to $\mm$-a.e.\ equality of Borel maps. For the first one this is obvious, because it is the identity on $\X$. For the second one we recall the definition of $\cB(\T_G\Y)$ to see that we need to prove that $\pi_\Y\circ\pi_{\T_G\Y}\circ{\sf S}$ is Borel (which it is, because it coincides with $u$) and that $x\mapsto \la {\sf S}_x,(\G_{u(x)}^z)'_0\ra$ is Borel for every $z\in\Y$ (which is easily seen to be the case from the requirement $(i)$ in Definition \ref{def:l2se1}).

We pass to the converse implication and start observing that Lemma \ref{le:l2s} and the definition of  $\cB(\T_G\Y)$  just recalled ensure that for any $v\in L^2(\Omega,\Y_{\bar y})$ the section given by $(\G_{u(x)}^{v(x)})'_0$ is the equivalence class up to $\mm$-a.e.\ equality of a Borel section. It follows that if  ${\sf T}$ is a Borel section as in the statement, then it satisfies the requirement $(i)$ in Definition \ref{def:l2se1}. We now claim that if ${\sf T}$ is also simple, then it also satisfies the requirement $(ii)$. To see this write ${\sf T}=  \sum_n\nchi_{E_n}\alpha_n\,u^*(\G_{\cdot}^{y_n})'_0$ and put ${\sf T}^i:=\sum_{n\leq i}\nchi_{E_n}\alpha_n\,u^*(\G_{\cdot}^{y_n})'_0$ where it is intended that for $x\notin \cup_{n\leq i}E_n$ we have ${\sf T}^i_x=0_{u(x)}\in\T_{u(x)}\Y$. Then putting $\beta_i:=\max_{n\leq i}\alpha_n$, $y_{n,i}:=(\G_{u(x)}^{y_n})_{\alpha_n/\beta_i}$ and defining $v^i\in L^2(\Omega,\Y_{\bar y})$ as $v^i\restr{E_n}:=y_{n,i}$ for $n\leq i$ and $v^i\restr{\Omega\setminus\cup_{n\leq i}E_n}\equiv u$ we see that ${\sf T}^i=\iota(\beta_i(\G_u^{v^i})'_0)$, so that (the equivalence class up to $\mm$-a.e.\ equality of) ${\sf T}^i$ belongs to $L^2(u^*\T_G\Y,\mm\restr\Omega)$. It is then clear that $\sfd_{L^2}({\sf T}^i,{\sf T})\to0$, proving that the equivalence class of ${\sf T}$ belongs to $L^2(u^*\T_G\Y,\mm\restr\Omega)$.

Then the conclusion for a generic section ${\sf T}$  as in the statement can be easily obtained by an approximation argument starting from the density result in Lemma \ref{le:denssimp}.
\end{proof}

\subsection{The Korevaar-Schoen energy} 
We recall here the key definitions and results of \cite{GT20}, where the original analysis done in \cite{KS93} has been generalized to the setting of $\RCD(K,N)$ spaces (\cite{AmbrosioGigliSavare11-2}, \cite{Gigli12}).

For the definitions of all the objects appearing below we refer to \cite{GT20} (but see also \cite{GPS18} for the definition of the differential $\d u$ appearing in the statement below).
\begin{theorem}[The Korevaar-Schoen energy]\label{thm:defks}
Let $(\X,\sfd,\mm)$ be a $\RCD(K,N)$ space, $K\in\R$, $N\in[1,\infty)$, $(\Y,\sfd_\Y,\bar y)$ a pointed \Cat0-space, $\Omega\subset\X$ open and $u\in L^2(\Omega,\Y_{\bar y})$. Then the following are equivalent:
\begin{itemize}
\item[i)] Letting $\sfks_{2,r}[u,\Omega]\colon \Omega\to\R^+$ be defined by
$$ \sfks_{2,r}[u,\Omega] (x) := \begin{cases} \ \Big\vert \fint_{B_r(x)} \frac{\sfd_{\Y}^2(u(x),u(\tilde{x}))}{r^2} \, \d \mm(\tilde{x}) \Big\vert^{1/2} &\text{ if } B_r(x) \subset \Omega, \\ \ 0 &\text{ otherwise.} \end{cases}$$
and the energy $\E^\sfKS(u)$ be given by
\begin{equation}
\label{eq:defE}
\E^\sfKS(u):= \limsup_{r\downarrow 0} \frac12\int_\Omega \sfks^2_{2,r}[u,\Omega]  \, \d\mm,
\end{equation}
we have $\E^\sfKS(u)<\infty$.
	\item[ii)] There is $G\in L^2(\Omega)$ such that for every $\varphi\colon \Y\to\R$ 1-Lipschitz with $\varphi(\bar y)=0$ we have $\varphi\circ u\in W^{1,2}(\Omega)$ with $|\d (\varphi\circ u)|\leq G$ $\mm$-a.e..
\end{itemize}
If any of these hold, the `energies at scale $r$'  $\sfks_{2,r}[u,\Omega]$ converge to $(d+2)^{-\frac12}|\d u|_{\sf HS}$ in $L^2(\Omega)$ as $r\downarrow0$. In particular, the $\lims$ in \eqref{eq:defE} is actually a limit and the energy admits the representation 
\[
\E^\sfKS(u)=\frac1{2(d+2)}\int_\Omega|\d u|_{\sf HS}^2\,\d\mm.
\]
Finally, the functional $\E^\sfKS\colon L^2(\Omega,\Y_{\bar y})\to[0,+\infty]$ is convex and lower semicontinuous.
\end{theorem}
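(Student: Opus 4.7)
The strategy is to reduce the analysis of the $\Y$-valued energy to a family of scalar-valued Sobolev problems on $(\X,\sfd,\mm)$ by post-composition with $1$-Lipschitz test functions on $\Y$, thereby exploiting the well-developed $W^{1,2}$-calculus on $\RCD(K,N)$ spaces. The bridge between the two pictures is the Kuratowski-style family $\varphi_z(y):=\sfd_\Y(y,z)-\sfd_\Y(\bar y,z)$, for $z$ ranging in a countable dense subset of $\Y$, which is $1$-Lipschitz, vanishes at $\bar y$, and has the property that $\sfd_\Y(u(x),u(\tilde x))=\sup_z|\varphi_z(u(x))-\varphi_z(u(\tilde x))|$ pointwise.

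For $(ii)\Rightarrow(i)$: from the pointwise estimate $\sfd_\Y^2(u(x),u(\tilde x))=\sup_z|\varphi_z\circ u(x)-\varphi_z\circ u(\tilde x)|^2$ and monotone convergence I would bound $\sfks_{2,r}^2[u,\Omega]$ by the $r$-scale difference quotients of $\varphi_z\circ u$. A standard $\RCD$ Poincaré-type estimate controls the $L^2$-norm of such scalar difference quotients uniformly in $r$ by the minimal weak upper gradient $|\d(\varphi_z\circ u)|\le G$, giving $\limsup_r\|\sfks_{2,r}[u,\Omega]\|_{L^2}<\infty$. Conversely, $(i)\Rightarrow(ii)$ is easier: the trivial bound $|\varphi\circ u(x)-\varphi\circ u(\tilde x)|\le\sfd_\Y(u(x),u(\tilde x))$ makes the scalar difference quotients of $\varphi\circ u$ pointwise dominated by $\sfks_{2,r}[u,\Omega]$, and a BBM-type characterization of $W^{1,2}(\Omega)$ on $\RCD(K,N)$ spaces via $L^2$-bounded difference quotients produces $\varphi\circ u\in W^{1,2}$ together with a bound $G$ which, by the supremum in the definition, can be chosen independent of $\varphi$.

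The convergence $\sfks_{2,r}[u,\Omega]\to(d+2)^{-1/2}|\d u|_{\sf HS}$ in $L^2(\Omega)$ and the ensuing representation formula constitute the analytic heart of the theorem, and this is the main obstacle. The idea is a blow-up at $\mm$-a.e.\ $x\in\Omega$: on an $\RCD(K,N)$ space the rescaled space $(\X,r^{-1}\sfd,\mm_x^r,x)$ pmGH-converges to a Euclidean tangent $(\R^d,|\cdot|,c_d\Leb d,0)$ for some integer $d\le N$, and the differential $\d u(x)\colon\R^d\to\T_{u(x)}\Y$ constructed in \cite{GPS18} is the correct object encoding the infinitesimal behavior of $u$. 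Passing the integrand of $\sfks_{2,r}^2$ to this limit yields an average against the $d$-dimensional Euclidean ball of the squared norm of a linear map; the computation $\fint_{B_1(0)\subset\R^d}|v|^2\,\d v=d/(d+2)$, combined with orthonormality of a suitable basis, identifies the limit with $(d+2)^{-1}|\d u|_{\sf HS}^2$. The delicate point is uniform control: one needs equi-integrability of the family $\{\sfks_{2,r}^2[u,\Omega]\}_{r>0}$, which follows from the scalar step applied to the dense family $\{\varphi_z\}$ plus the uniform doubling and Poincaré hypotheses coming from $\RCD(K,N)$, so that the $\mm$-a.e.\ blow-up convergence upgrades to strong $L^2$-convergence via Vitali's theorem.

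Finally, convexity and lower semicontinuity of $\E^\sfKS\colon L^2(\Omega,\Y_{\bar y})\to[0,+\infty]$ are deduced from characterization (ii). For any $1$-Lipschitz $\varphi$ with $\varphi(\bar y)=0$, post-composition $u\mapsto\varphi\circ u$ is $1$-Lipschitz from $L^2(\Omega,\Y_{\bar y})$ to $L^2(\Omega)$, and the $\Cat0$ property of $\Y$ combined with $1$-Lipschitzness of $\varphi$ gives that $t\mapsto\varphi\circ(\G_u^v)_t$ lies pointwise below the linear interpolation $(1-t)\varphi\circ u+t\varphi\circ v$; since the scalar Cheeger energy is convex and $L^2$-lower semicontinuous, supremum over a countable dense family of such $\varphi$'s (which, as in the proof of (i)$\Rightarrow$(ii), produces $G$) transfers both properties to $\E^\sfKS$. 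Alternatively, these properties can be read off directly from the finite-scale quantities $\frac12\int_\Omega\sfks_{2,r}^2[u,\Omega]\,\d\mm$, whose convexity in $u$ is immediate from \eqref{eq:cat0def} and whose $L^2$-lower semicontinuity follows from Fatou, with the limsup in \eqref{eq:defE} preserving both along the way.
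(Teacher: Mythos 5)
A preliminary but important remark: the paper offers no proof of Theorem \ref{thm:defks} at all --- it is imported verbatim from \cite{GT20} (building on \cite{KS93}), so there is no in-paper argument to compare yours against. Your overall architecture (reduction to scalar Sobolev calculus via post-composition, blow-up at a.e.\ point to a Euclidean tangent to identify the density $(d+2)^{-1}|\d u|_{\sf HS}^2$, Vitali to upgrade a.e.\ to $L^2$ convergence) does match the strategy of \cite{GT20}. However, three of your steps, as written, would fail.

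First, in (ii)$\Rightarrow$(i) you propose to bound $\sfks_{2,r}^2[u,\Omega](x)=\fint_{B_r(x)}r^{-2}\sup_z|\varphi_z\circ u(x)-\varphi_z\circ u(\tilde x)|^2\,\d\mm(\tilde x)$ by the scalar difference quotients of the individual $\varphi_z\circ u$. The supremum sits \emph{inside} the integral, and $\fint\sup_z\geq\sup_z\fint$, so controlling each $\varphi_z\circ u$ in an integral (Poincar\'e) sense does not control the integral of the supremum; monotone convergence only rewrites the left-hand side, it does not reverse the inequality. This is exactly why (ii)$\Rightarrow$(i) is the nontrivial implication. The repair is a \emph{pointwise} estimate of the form $|\varphi_z\circ u(x)-\varphi_z\circ u(\tilde x)|\leq C\,\sfd(x,\tilde x)\big(M_rG(x)+M_rG(\tilde x)\big)$ with $M_r$ a restricted maximal operator and a constant depending only on $G$, hence uniform in $z$; this survives the supremum and, via $L^2$-boundedness of the maximal function, yields both the uniform-in-$r$ bound and the equi-integrability you later need for Vitali. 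Second, your first route to convexity asserts that $t\mapsto\varphi\circ(\G_u^v)_t$ lies below the linear interpolation because $\varphi$ is $1$-Lipschitz and $\Y$ is \Cat0. This is false: $1$-Lipschitz functions need not be geodesically convex (take $\varphi=-\sfd_\Y(\cdot,z)+\sfd_\Y(\bar y,z)$ on $\Y=\R$). Moreover, no supremum over $1$-Lipschitz post-compositions can reconstruct $\E^\sfKS$ exactly, since --- as the remark following the theorem points out --- the optimal $G$ in (ii) is the operator norm of $\d u$, which only controls $|\d u|_{\sf HS}$ up to a dimensional constant. Your alternative route is the correct one: the joint convexity of $t\mapsto\sfd_\Y^2(\gamma_t,\eta_t)$ along pairs of geodesics, i.e.\ the quantitative form of \eqref{eq:cat0def} leading to \eqref{eq:KSint}, makes each approximate energy $u\mapsto\frac12\int_\Omega\sfks^2_{2,r}[u,\Omega]\,\d\mm$ convex, and a $\limsup$ of convex functionals is convex. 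Third, for lower semicontinuity, ``Fatou, with the limsup preserving both'' does not work: each approximate energy is indeed $L^2$-lower semicontinuous (even continuous), but $\limsup_{r\downarrow0}=\inf_{\rho>0}\sup_{0<r<\rho}$ is a decreasing limit of lower semicontinuous functionals, and such limits are in general only upper semicontinuous. One must either exhibit $\E^\sfKS$ as a genuine supremum (e.g.\ over cut-off functions, as in \cite{KS93}), or first establish the identification $\E^\sfKS(u)=\frac1{2(d+2)}\int_\Omega|\d u|_{\sf HS}^2\,\d\mm$ together with the equivalence with (ii), and then inherit lower semicontinuity from the scalar Cheeger energies.
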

\begin{remark}{\rm
It should be noticed that the smallest function $G$ for which $(ii)$ holds is not the Hilbert-Schmidt norm $|\d u|_{\sf HS}$ of the differential $\d u$ of $u$, but rather the (pointwise) operator norm of $\d u$. The two quantities are nevertheless comparable, i.e.\ one controls the other up to multiplication with a dimensional constant. 
}\fr\end{remark}
We shall denote by $\sfKS^{1,2}(\Omega,\Y_{\bar{y}})\subset L^2(\Omega,\Y_{\bar y})$ the collection of maps with finite energy and recall from \cite{GT20} that for $ u,v\in \sfKS^{1,2}(\Omega,\Y_{\bar{y}})$ we always have $\sfd_\Y(u,v)\in W^{1,2}(\Omega)$. Therefore it makes sense to ask whether $u,v$ attain the same boundary value by checking whether or not we have $\sfd_\Y(u,v)\in W^{1,2}_0(\Omega)$. 

Then given $\bar u\in \sfKS^{1,2}(\Omega,\Y_{\bar{y}})$ the `energy $\E^\sfKS_{\bar u}\colon L^2(\Omega,\Y)\to [0,\infty]$ with $\bar u$ as prescribed boundary value' can be defined as
\[
\E^\sfKS_{\bar u}(u):=\left\{\begin{array}{ll}
\E^\sfKS(u)&\qquad\text{if $u\in \sfKS^{1,2}(\Omega,\Y_{\bar{y}})$ and $\sfd_\Y(u,\bar u)\in W^{1,2}_0(\Omega)$},\\
+\infty&\qquad\text{otherwise}.
\end{array}
\right.
\] 
We shall denote the domain of $\E^\sfKS_{\bar u}$ by $\sfKS^{1,2}_{\bar u}(\Omega,\Y_{\bar{y}})\subset L^2(\Omega,\Y_{\bar y})$ and recall that 
\begin{equation}
\label{eq:proprE}
\E^{\sfKS}_{\bar u}\colon L^2(\Omega,\Y_{\bar y})\to[0,+\infty]\qquad\text{is convex and lower semicontinuous,}
\end{equation}
moreover it admits a unique minimizer,  called harmonic map with $\bar u$ as boundary value.

\begin{remark}{\rm
Even if the definition of $\E^\sfKS_{\bar u}$ can be given in high generality, it should be noted that it may happen that $\E^\sfKS_{\bar u}=\E^\sfKS$. This happens when $W^{1,2}_0(\Omega)=W^{1,2}(\Omega)$ which in turn occurs if $\X\setminus\Omega$ has null capacity. Thus in practical situations if one wants to enforce some boundary condition, it should be checked that  $\X\setminus\Omega$ has positive capacity.
}\fr\end{remark}

For later use we recall that the convexity of both $\E^{\sfKS}$ and $\E^{\sfKS}_{\bar u}$ can  be improved to the following inequality:
\begin{equation}
\label{eq:KSint}
\E^{\sfKS}((\G_u^v)_t)+t(1-t)\E^{\sfKS}(d)\leq (1-t)\E^{\sfKS}(u)+t\E^{\sfKS}(v)\qquad\forall t\in[0,1],
\end{equation}
where $d(x):=\sfd(u(x),v(x))$. Such inequality has been proved for the case $t=\frac12$ in \cite{GT20} (imitating the arguments in \cite{KS93}), the general case follows along the same arguments. It is worth to underline that in the above the maps $u,v,(\G_u^v)_t$ are $\Y$-valued, while $d$ is real valued. In this sense the energy of $\E^{\sfKS}(d)$ of $d$ has a different meaning w.r.t.\ the energy of the other maps. Still,  we recall (see \cite{GT20} and \cite{KS93}) that for a constant $c(d)$ depending only on the essential dimension $d\le N$ of $\X$ we have $\E^{\sfKS}(f)=c(d){\sf Ch}(f)$ for any $f\in L^2(\Omega)$, where ${\sf Ch}$ is the standard Cheeger/Dirichlet energy on $\X$.

\subsection{The Laplacian of a \Cat0-valued map}

Let us start by giving the general definition of Laplacian of a \Cat0-valued Sobolev map:
\begin{definition}[Tension field/Laplacian]\label{def:laplacian}
Let $(\X,\sfd,\mm)$ be a $\RCD(K,N)$ space, $\Omega\subset\X$ an open subset, $(\Y,\sfd_\Y,\bar y)$ a pointed \Cat0-space and $\bar u\in \sfKS^{1,2}(\Omega,\Y_{\bar{y}})$.

Then the domain of the Laplacian $D(\Delta_{\bar u})\subset  \sfKS_{\bar u }^{1,2}(\Omega,\Y_{\bar{y}})$  is defined as $D(\Delta_{\bar u}):=D(|\partial^-\E^\sfKS_{\bar u}|)$ and for $u\in D(\Delta_{\bar u})$ we put 
\[
\Delta_{\bar u} u:=\iota({\sf v})\in L^2(u^*\T_G\Y,\mm\restr\Omega) , \quad\text{ where ${\sf v}$ is the element of minimal norm in $-\partial^-\E^\sfKS_{\bar u}(u)$.}
\]
Similarly, for maps $u$ from $\X$ to $\Y$ we say that $u$ is in the domain of the Laplacian $D(\Delta)$ if $|\partial^-\E^\sfKS|(u)<\infty$ and in this case $\Delta u:=\iota({\sf v})$, where $\iota({\sf v})$ is  the element of minimal norm in $-\partial^-\E^\sfKS(u)$.
\end{definition}

\begin{proposition}[Laplacian and variation of the energy]\label{prop:varen}
Let $(\X,\sfd,\mm)$ be a $\RCD(K,N)$ space, $\Omega\subset\X$ an open subset, $(\Y,\sfd_\Y,\bar y)$ a pointed \Cat0-space and $\bar u\in \sfKS^{1,2}(\Omega,\Y_{\bar{y}})$. Also, let $u\in D(\Delta_{\bar u})$. Then, for every $v\in L^2(\Omega,\Y_{\bar y})$, we have
\begin{equation}
\label{eq:varen}
-\int_{\X}\la \Delta_{\bar u} u(x), \big(\G_{u(x)}^{v(x)}\big)'_0\ra_{u(x)} \  \d \mm(x)\leq \lim_{t\downarrow0}\frac{\E_{\bar u}^{\sfKS}((\G_u^v)_t)-\E_{\bar u}^{\sfKS}(u)}t.
\end{equation}
Moreover, $u$ is harmonic with $\bar u $ as boundary value if and only if $u\in D(\Delta_{\bar u})$ with $\Delta_{\bar u} u=0$.
\end{proposition}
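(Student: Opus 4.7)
The plan is to recognize that the inequality \eqref{eq:varen} is nothing more than the defining subdifferential inequality for the element of minimal norm in $-\partial^-\E^{\sfKS}_{\bar u}(u)$, read through the isometric identification $\iota\colon \T_uL^2(\Omega,\Y_{\bar y})\to L^2(u^*\T_G\Y,\mm\restr\Omega)$ of Proposition \ref{prop:l2gen}, while the second assertion is a direct application of \eqref{eq:sub0} combined with the identity $D(\Delta_{\bar u})=D(-\partial^-\E^{\sfKS}_{\bar u})$ provided by Theorem \ref{thm:rightD}.

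For \eqref{eq:varen} I would first let ${\sf v}\in\T_uL^2(\Omega,\Y_{\bar y})$ be the element of minimal norm in $-\partial^-\E^{\sfKS}_{\bar u}(u)$, so that by Definition \ref{def:laplacian} we have $\Delta_{\bar u}u=\iota({\sf v})$. I would then apply the equivalent characterization \eqref{eq:subder} of the subdifferential inclusion along the $L^2$-geodesic $t\mapsto (\G_u^v)_t$ from $u$ to $v$, obtaining
\[
-\la {\sf v},(\G_u^v)'_0\ra_u\leq \lim_{t\downarrow 0}\frac{\E^{\sfKS}_{\bar u}((\G_u^v)_t)-\E^{\sfKS}_{\bar u}(u)}{t},
\]
where existence of the right-hand limit is granted by convexity of $\E^{\sfKS}_{\bar u}$ (the difference quotients are monotone in $t$). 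The final step is to invoke Proposition \ref{prop:l2gen}: by the very construction of $\iota$, the class $(\G_u^v)'_0$ is sent to the section $x\mapsto (\G_{u(x)}^{v(x)})'_0$, and ${\sf v}$ is sent to $\Delta_{\bar u}u$; then the scalar-product preservation \eqref{eq:e2} yields
\[
\la {\sf v},(\G_u^v)'_0\ra_u=\int_{\Omega}\la \Delta_{\bar u}u(x),(\G_{u(x)}^{v(x)})'_0\ra_{u(x)}\,\d\mm(x),
\]
and \eqref{eq:varen} follows.

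For the characterization of harmonic maps, I would start from \eqref{eq:proprE} which ensures convexity and lower semicontinuity of $\E^{\sfKS}_{\bar u}$, so that \eqref{eq:sub0} applies: $u$ is the (unique) minimizer of $\E^{\sfKS}_{\bar u}$, i.e.\ harmonic with boundary value $\bar u$, if and only if $0_u\in -\partial^-\E^{\sfKS}_{\bar u}(u)$. If this holds, then in particular $u\in D(-\partial^-\E^{\sfKS}_{\bar u})=D(|\partial^-\E^{\sfKS}_{\bar u}|)=D(\Delta_{\bar u})$ by Theorem \ref{thm:rightD}, and since $0_u$ trivially has minimal norm in $-\partial^-\E^{\sfKS}_{\bar u}(u)$, Definition \ref{def:laplacian} gives $\Delta_{\bar u}u=\iota(0_u)$, which is the zero section (being the unique element of $L^2(u^*\T_G\Y,\mm\restr\Omega)$ of zero norm, by \eqref{eq:e1}). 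Conversely, if $\Delta_{\bar u}u=0$, then $\iota^{-1}(0)=0_u\in -\partial^-\E^{\sfKS}_{\bar u}(u)$, and \eqref{eq:sub0} forces $u$ to be harmonic.

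I do not anticipate a substantive obstacle: all of the technology has been set up beforehand, and the proof reduces to threading together Theorem \ref{thm:rightD}, Proposition \ref{prop:l2gen}, and \eqref{eq:sub0}/\eqref{eq:subder}. The only point deserving minor care is ensuring that the isometry $\iota$ is used on both sides consistently (so that the pairing in $\T_uL^2$ really becomes the integrated fiberwise pairing), and noting that the right-hand limit in \eqref{eq:varen} makes sense by convexity.
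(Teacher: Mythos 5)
Your proposal is correct and follows essentially the same route as the paper: the inequality is the subdifferential characterization \eqref{eq:subder} applied to the element of minimal norm in $-\partial^-\E^{\sfKS}_{\bar u}(u)$, transported through the isometry $\iota$ of Proposition \ref{prop:l2gen}, and the harmonicity statement is \eqref{eq:sub0}. The extra details you supply (monotonicity of the difference quotients, the identification of $\iota(0_u)$ with the zero section) are accurate and only make explicit what the paper leaves implicit.
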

\begin{proof}
Inequality \eqref{eq:varen} follows applying  \eqref{eq:subder}, the definition of $\Delta_{\bar u} u$ and recalling Proposition \ref{prop:l2gen}. The second claim is a restatement of \eqref{eq:sub0} in this setting.
\end{proof}
\begin{remark}{\rm
This last proposition shows that our definition is compatible with the classical one valid in the smooth category. Indeed, if $\X,\Y$ are smooth Riemannian manifold, $\bar u,u\colon \bar \Omega\subset\X\to\Y$ are smooth maps with the same boundary values, ${\sf v}$ is a smooth section of $u^*\T\Y$ (in the smooth setting $\T_G\Y$ is canonically equivalent to the standard tangent bundle $\T\Y$) which is 0 on $\partial\Omega$,  then we can produce a smooth perturbation of $u$ by putting $u_t(x):=\exp_{u(x)}(t{\sf v}_x)$. A direct computation then shows that
\[
\frac{\d}{\d t}\restr{t=0}\E^\sfKS_{\bar u}(u_t)=-\int_\Omega\la \tau(u)_x,{\sf v}_x\ra_{u(x)}\,\d\mm(x),
\]
where $\tau(u)$ is the \emph{tension field of $u$}, see for instance \cite[Section 9.2]{Jost17}. This formula is the smooth version of \eqref{eq:varen}. Notice indeed that $u_t=(\G_u^{u_1})_t$ for $t\in[0,1]$ (and similarly $u_t=(\G_u^{u_{-1}})_{-t}$ for $t\in[-1,0]$) and that if everything is smooth, then $t\mapsto \E^\sfKS_{\bar u}(u_t)$ is $C^1$, hence differentiable in 0, so that the one-sided bound in \eqref{eq:varen} becomes an equality in the smooth case.

It is worth to underline that in our framework the lack of equality in \eqref{eq:varen} is not only related to the lack of smoothness of $t\mapsto \E^\sfKS_{\bar u}(u_t)$, which a priori could produce different left and right derivatives in 0, but also to the fact that tangent \emph{cones} are not really tangent \emph{spaces}: the opposite of a vector field does not necessarily exist and thus we are forced to take one-sided perturbations only.
}\fr\end{remark}

A direct consequence of Proposition \ref{prop:varen} above is the following:
\begin{corollary}
With the same assumptions and notation as in Proposition \ref{prop:varen} we have
\[
\E_{\bar u}^{\sfKS}(u)-\int_{\X}\la \Delta_{\bar u} u(x), \big(\G_{u(x)}^{v(x)}\big)'_0\ra  _{u(x)} \  \d \mm(x)+\E^{\sfKS}(d)\leq \E_{\bar u}^{\sfKS}(v),
\]
where $d:=\sfd(u,v)$.
\end{corollary}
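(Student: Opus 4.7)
The plan is to combine the one-sided differential inequality \eqref{eq:varen} from Proposition \ref{prop:varen} with the strengthened convexity inequality \eqref{eq:KSint} of the Korevaar--Schoen energy, which contains the extra quadratic term $t(1-t)\E^\sfKS(d)$.

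First I would reduce to the nontrivial case $v \in \sfKS^{1,2}_{\bar u}(\Omega,\Y_{\bar y})$: otherwise $\E^\sfKS_{\bar u}(v)=+\infty$ and the claim is immediate. In this case I would check (or invoke) that the convexity of the domain $\sfKS^{1,2}_{\bar u}(\Omega,\Y_{\bar y})$ implies $(\G_u^v)_t \in \sfKS^{1,2}_{\bar u}(\Omega,\Y_{\bar y})$ for all $t\in[0,1]$, so that \eqref{eq:KSint} holds with $\E^\sfKS_{\bar u}$ replacing $\E^\sfKS$ on the left-hand side as well, since on this common domain the two functionals agree.

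Then I would rewrite \eqref{eq:KSint} as
\[
\frac{\E^\sfKS_{\bar u}((\G_u^v)_t)-\E^\sfKS_{\bar u}(u)}{t} \leq \E^\sfKS_{\bar u}(v) - \E^\sfKS_{\bar u}(u) - (1-t)\E^\sfKS(d),
\]
and let $t \downarrow 0$. The existence of the limit on the left-hand side is guaranteed by \eqref{eq:subder} (combined with the equivalent formulation \eqref{eq:varen} of the minus-subdifferential), and the right-hand side converges to $\E^\sfKS_{\bar u}(v) - \E^\sfKS_{\bar u}(u) - \E^\sfKS(d)$.

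Finally, I would chain this estimate with \eqref{eq:varen} from Proposition \ref{prop:varen} to obtain
\[
-\int_\X \la \Delta_{\bar u} u(x), (\G_{u(x)}^{v(x)})'_0\ra_{u(x)}\,\d\mm(x) \leq \E^\sfKS_{\bar u}(v) - \E^\sfKS_{\bar u}(u) - \E^\sfKS(d),
\]
and rearrange to get the claim. The only genuinely subtle point is the justification that the strengthened convexity \eqref{eq:KSint} transfers to $\E^\sfKS_{\bar u}$; everything else is a direct assembly of Proposition \ref{prop:varen} and \eqref{eq:KSint}.
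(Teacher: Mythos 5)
Your proposal is correct and follows essentially the same route as the paper, whose proof is the one-line instruction to couple \eqref{eq:varen} with \eqref{eq:KSint}; you have simply written out the coupling (divide \eqref{eq:KSint} by $t$, let $t\downarrow 0$, chain with \eqref{eq:varen}) in detail. Your remark about transferring \eqref{eq:KSint} to $\E^{\sfKS}_{\bar u}$ is handled implicitly by the paper, which states that inequality for both $\E^{\sfKS}$ and $\E^{\sfKS}_{\bar u}$.
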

\begin{proof}
Couple \eqref{eq:varen} with \eqref{eq:KSint}.
\end{proof}

In the next discussion, we are interested in properties of the composition $f \circ u$, whenever $u$ is a harmonic map and $f$ is $\lambda$-convex functional. Observe that, in a smooth framework, the chain rule $\Delta (f\circ u) =$ Hess$ f(\nabla u,\nabla u)  + \d f(\Delta u)  $ immediately implies that
\begin{equation}
\label{eq:casoliscio}
\Delta (f\circ u)\geq \lambda|\d u|^2_{\sf HS}\qquad\text{if $f$ is $\lambda$-convex and $u$ is harmonic.}
\end{equation}
A nonsmooth version of \eqref{eq:casoliscio} has already been addressed in \cite{LS19} (see Theorem 1.2 there) for maps with \emph{euclidean} source domain and \Cat0-target. Nevertheless, as we are going to show in Theorem \ref{thm:nablafu}, the discussion generalizes to our framework: the main stumbling block to overcome being the absence of \emph{Lipschitz} vector field on a $\RCD$-space. In the next, we shall need the following property of Sobolev functions and, specifically, of their directional derivatives (for the definition of test vector field see \cite{Gigli14} and for the concept of Regular Lagrangian Flow see \cite{Ambrosio-Trevisan14}):
\begin{proposition}\label{prop:dercurv}
Let $(\X,\sfd,\mm)$ be a $\RCD(K,N)$ space, $(\Y,\sfd_\Y,\bar y)$ a pointed complete metric space, $\Omega\subset \X$ open, $v\in L^2_\mm(T\X)$ a test vector field and $(\sfFL_{s}^v)$ the associated Regular Lagrangian Flow. Also, let $u\in  \sfKS^{1,2}(\Omega,\Y_{\bar{y}})$.

Then, for every $K\subset\Omega$ compact, we have that 
\begin{equation}
\label{eq:dery}
\lim_{s\to 0}\frac{\sfd_\Y(u\circ\sfFL_{s}^v,u)}{s}=|\d u(v)|\qquad\text{in $L^2(K)$}.
\end{equation}
(notice that for $|s|$ small the map $u\circ\sfFL_{s}^v$ is well defined from $K$ to $Y$). 

Similarly, for a real valued Sobolev function $g\in W^{1,2}(\Omega)$ we have
\begin{equation}
\label{eq:derr}
\lim_{s\to 0}\frac{g\circ\sfFL_{s}^v-g}{s}=\d g(v)\qquad\text{in $L^2(K)$}.
\end{equation}
\end{proposition}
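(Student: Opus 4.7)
My plan is to prove the real-valued statement \eqref{eq:derr} first, then bootstrap it to the metric-valued statement \eqref{eq:dery} by a sandwich argument: the lower bound comes from post-composition with $1$-Lipschitz functions, and the upper bound from absolute continuity of $s\mapsto u(\sfFL_s^v(x))$ along $\mm$-a.e.\ flow line.

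For \eqref{eq:derr}, the key input from RLF theory for test vector fields is the identity
\[
g\circ\sfFL_s^v-g=\int_0^s \d g(v)\circ\sfFL_r^v\,\d r,
\]
understood as an $L^2(K)$-valued Bochner integral. Dividing by $s$, convergence to $\d g(v)$ in $L^2(K)$ follows from continuity of $r\mapsto \d g(v)\circ \sfFL_r^v$ at $r=0$ in $L^2(K)$, which itself follows from the bounded compression property $(\sfFL_r^v)_\#\mm\leq C\mm$, convergence in measure $\sfFL_r^v\to{\rm Id}$, and density of continuous functions in $L^2$.

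For \eqref{eq:dery}, pick a countable dense sequence $(y_n)$ in the essential image of $u$ and set $\varphi_n(y):=\sfd_\Y(y,y_n)-\sfd_\Y(\bar y,y_n)$. Each $\varphi_n$ is $1$-Lipschitz with $\varphi_n(\bar y)=0$, so characterization $(ii)$ of Theorem \ref{thm:defks} gives $\varphi_n\circ u\in W^{1,2}(\Omega)$. Applying \eqref{eq:derr} to each $\varphi_n\circ u$ and using the pointwise bound $|\varphi_n(u\circ\sfFL_s^v)-\varphi_n(u)|\leq \sfd_\Y(u\circ\sfFL_s^v,u)$, I obtain after extracting an a.e.\ convergent subsequence that
\[
\liminf_{s\to 0}\frac{\sfd_\Y(u\circ\sfFL_s^v,u)}{s}\geq \sup_n|\d(\varphi_n\circ u)(v)|=|\d u(v)|\qquad\mm\text{-a.e.\ on }K,
\]
where the last equality is the defining property of $|\d u(v)|$ via test post-compositions (cf.\ \cite{GPS18,GT20}). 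For the matching upper bound—the main obstacle—I would show that, up to redefining $u$ on an $\mm$-negligible set, for $\mm$-a.e.\ $x\in K$ the curve $s\mapsto u(\sfFL_s^v(x))$ is locally absolutely continuous in $\Y$ with metric speed bounded above by $|\d u(v)|(\sfFL_s^v(x))$ a.e.\ in $s$. This is a metric-valued Sobolev-on-a.e.-curve statement, which uses that the RLF of a test vector field produces a test plan compatible with the $W^{1,2}$ theory, together with the fact that the upper bound $|\d u(v)|$ is the essential supremum of $|\d(\varphi\circ u)(v)|$ over a countable $1$-Lipschitz family whose differentials recover $\sfd_\Y$ on the image. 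Granted this, integration along the flow gives
\[
\sfd_\Y(u\circ\sfFL_s^v(x),u(x))\leq\int_0^s|\d u(v)|\circ\sfFL_r^v(x)\,\d r,
\]
so that $s^{-1}\sfd_\Y(u\circ\sfFL_s^v,u)\leq \fint_0^s|\d u(v)|\circ\sfFL_r^v\,\d r$, and the right-hand side converges to $|\d u(v)|$ in $L^2(K)$ by exactly the argument used for \eqref{eq:derr}. Combining the pointwise a.e.\ lower bound with this $L^2$ upper bound (via Fatou applied to $L^2$ norms and convergence of norms forcing strong convergence), \eqref{eq:dery} follows.
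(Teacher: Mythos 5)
The paper does not prove this proposition from scratch: \eqref{eq:derr} is taken as an equivalent form of the very definition of Regular Lagrangian Flow via \cite[Proposition 2.7]{GR17}, and \eqref{eq:dery} is obtained by invoking \cite[Remark 4.15]{GT20} to place $u$ in the directional Korevaar--Schoen class of \cite{GT18} and then citing \cite[Theorem 4.5]{GT18}. Your proposal instead reconstructs the argument, and its skeleton --- the integral identity along the flow for \eqref{eq:derr}, then a sandwich for \eqref{eq:dery} with a lower bound from $1$-Lipschitz post-compositions and an upper bound from absolute continuity along a.e.\ flow line --- is sound and is essentially how the cited result of \cite{GT18} is actually proved, so your route is more self-contained than the paper's. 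Two caveats, however. First, the identity $\sup_n|\d(\varphi_n\circ u)(v)|=|\d u(v)|$ $\mm$-a.e.\ is not a \emph{definition} in the framework of \cite{GPS18}, where $\d u$ is constructed abstractly through a pullback module; that the pointwise norm of $\d u(v)$ is recovered by post-compositions with distance functions from a countable dense subset of the essential image is a structural fact that must be proved or cited, and your sandwich only closes once it is in place (the lower bound naturally produces $\sup_n|\d(\varphi_n\circ u)(v)|$, the upper bound $|\d u(v)|$, and without the identification you only get convergence to the former). Second, the step you yourself flag as the main obstacle --- absolute continuity of $s\mapsto u(\sfFL^v_s(x))$ for a.e.\ $x$ with metric speed at most $|\d u(v)|\circ\sfFL^v_s(x)$ --- is precisely the technical heart of \cite{GT18}; your sketch (each $\varphi_n\circ u\in W^{1,2}(\Omega)$ is absolutely continuous along a.e.\ RLF trajectory with derivative $\d(\varphi_n\circ u)(v)\circ\sfFL^v_s$, the chain rule gives $|\d(\varphi_n\circ u)(v)|\le|\d u(v)|$, and the countable supremum recovers $\sfd_\Y$) is the correct one and does close after intersecting countably many full-measure sets, but as written it is an outline rather than a proof. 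The final limiting step (a.e.\ liminf lower bound plus an $L^2$-convergent dominating upper bound, combined via subsequences and dominated convergence) is correct. In short: legitimate and genuinely different in presentation, but the two inputs above need to be either fully executed or replaced by the citations the paper relies on.
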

\begin{proof} Property \eqref{eq:derr} is (an equivalent version of) the definition of Regular Lagrangian Flow, see for instance \cite[Proposition 2.7]{GR17}. For \eqref{eq:dery} recall first \cite[Remark 4.15]{GT20} to get that functions in $\sfKS^{1,2}(\Omega,\Y_{\bar{y}})$ also belong to the `direction' Korevaar-Schoen space as defined in \cite{GT18}, then recall  \cite[Theorem 4.5]{GT18}.
\end{proof}
The next Lemma deals with variations of a map $u$, suitably obtained through gradient flows trajectories in the target space, and the rate of change at the level of Korevaar-Schoen energy (see \eqref{eq:stimapunt}-\eqref{eq:dervaru} below). In the following statement, notice that $f\circ u$ belongs to $W^{1,2}(\Omega)$ - and thus $\d(f\circ u)$ is well defined - because $f$ is Lipschitz, $\Omega$ has finite measure and by $(ii)$ in Theorem \ref{thm:defks}. Also, for the very same reason, we shall drop the subscript $\bar{y}$ from $\Y$ when $\Omega$ is bounded as the $L^2$-integrability depends no more on the particular chosen point $\bar{y} \in \Y$. Compare the proof with \cite[Lemma 3.1]{LS19}.
\begin{lemma}\label{lem:varEfu} Let $(\X,\sfd,\mm)$ be a $\RCD(K,N)$ space, $\Y$ \Cat0-space and $\Omega\subset \X$ open and bounded. Also, let   $f \in \Lip(\Y)$ be $\lambda$-convex, $\lambda\in\R$, and $u \in \sfKS^{1,2}(\Omega,\Y)$. For $g \in \Lip_{bs}(\X)^+$, define the (equivalence class of the) variation map
\[ 
u_t(x)={\sf GF}^f_{tg(x)}(u(x))\qquad\forall t>0,\ x\in\Omega.
\]

Then,  $u_t \in \sfKS^{1,2}(\Omega,\Y)$ for every $t>0$ and there is a constant $C>0$ depending on $f,g$ such that
\begin{equation}
\label{eq:stimapunt}
|\d u_t|_{\sf HS}^2\leq e^{-2\lambda tg }\big(|\d u|_{\sf HS}^2-2t \,\la \d g,\d(f\circ u)\ra+Ct^2\big)\qquad\mm\text{-}a.e.\ in \ \Omega,
\end{equation}
holds for every $t\in[0,1]$. In particular
\begin{equation}
\label{eq:dervaru}
\limsup_{t\downarrow 0} \frac{\E^{\sfKS}(u_t)-\E^{\sfKS}(u)}{t} \le -\int_{\Omega} \frac{\lambda}{d+2} g|\d u|^2_{\sf HS} + \la \d(f\circ u),\d g\ra \, \d \mm.
\end{equation}
\end{lemma}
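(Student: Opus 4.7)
\emph{Overall strategy.} The plan is to prove the pointwise bound \eqref{eq:stimapunt} first; the Sobolev regularity and the integral inequality \eqref{eq:dervaru} will both follow as consequences. The main engine is Lemma \ref{lem:apriori} applied to the gradient flow of $f$: since $u_t(x)={\sf GF}^f_{tg(x)}(u(x))$ and $u_t(\tilde x)={\sf GF}^f_{tg(\tilde x)}(u(\tilde x))$ are images of two points under this flow evaluated at two different times, this is precisely the setting of the a priori estimate. I would then divide by $r^2$, average in $\tilde x$ over $B_r(x)$, and send $r\to 0$, using Theorem \ref{thm:defks} together with a polarization argument to convert the averaged squared-distance quantities into $|\d u|_{\sf HS}^2$ and $\la \d g, \d(f\circ u)\ra$.

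\emph{Sobolev regularity.} Before the delicate bound, a quick triangle inequality argument --- combining the $\lambda$-contraction \eqref{eq:contr} in the spatial variable with the fact that $|\partial^- f|\le \Lip(f)$ makes gradient-flow trajectories $\Lip(f)$-Lipschitz in time (by \eqref{eq:metconv}) --- gives the crude estimate
\[
\sfd_\Y(u_t(x), u_t(\tilde x)) \;\le\; e^{|\lambda|t\|g\|_\infty}\,\sfd_\Y(u(x), u(\tilde x)) + t\Lip(f)\Lip(g)\,\sfd(x,\tilde x),
\]
which after squaring and averaging shows that $\sfks^2_{2,r}[u_t,\Omega]$ is dominated uniformly in $r$ by an $L^1$ function. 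Hence $u_t\in \sfKS^{1,2}(\Omega,\Y)$ by Theorem \ref{thm:defks}(i).

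\emph{Sharper bound.} For $\mm$-a.e.\ $x, \tilde x$, WLOG with $g(x)\ge g(\tilde x)$, set $T = tg(x)$, $S = tg(\tilde x)$ and apply Lemma \ref{lem:apriori} with $\E = f$, $y = u(x)$, $z = u(\tilde x)$. Using $|\partial^- f|\le \Lip(f)$, the estimate $\sfd_\Y(y_r, u(\tilde x))\le r\Lip(f)+\sfd_\Y(u(x),u(\tilde x))$, and $\theta_\lambda(r)\le e^{2|\lambda|r}r$, one obtains
\[
\sfd_\Y^2(u_t(x),u_t(\tilde x))\;\le\; e^{-2\lambda tg(\tilde x)}\Bigl[\sfd_\Y^2(u(x),u(\tilde x)) + 2t(g(x)-g(\tilde x))\bigl(f(u(\tilde x))-f(u(x))\bigr) + R\Bigr],
\]
where $|R|\le C_1 t^2 \sfd^2(x,\tilde x) + C_2 t\,\sfd(x,\tilde x)\sfd_\Y^2(u(x),u(\tilde x))$ for constants depending only on $\lambda,\Lip(f),\Lip(g),\|g\|_\infty$. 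Dividing by $r^2$ and averaging over $\tilde x \in B_r(x)$: continuity of $g$ replaces $e^{-2\lambda tg(\tilde x)}$ by $e^{-2\lambda tg(x)}$; Theorem \ref{thm:defks} converts $\fint \sfd_\Y^2(u,u)/r^2\,\d\mm$ into $|\d u|_{\sf HS}^2(x)/(d+2)$; polarization of the Korevaar--Schoen limit (applied to the Lipschitz real-valued functions $g$ and $h:=f\circ u$, both in $W^{1,2}(\Omega)$) turns the cross term into $-2t\la \d g,\d(f\circ u)\ra(x)/(d+2)$; the first piece of $R$ contributes $O(t^2)$ (since $\sfd(x,\tilde x)\le r$ on $B_r(x)$); and the second piece of $R$ vanishes thanks to $\fint_{B_r(x)}\sfd(x,\tilde x)\sfd_\Y^2(u,u)/r^2\,\d\mm \le r\,\sfks_{2,r}^2[u](x)\to 0$. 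Multiplying the resulting $L^1$ inequality through by $d+2$ yields \eqref{eq:stimapunt}. Finally, \eqref{eq:dervaru} follows by integrating \eqref{eq:stimapunt}, multiplying by $\tfrac{1}{2(d+2)}$, subtracting $\E^{\sfKS}(u)$, dividing by $t$, and applying dominated convergence together with $(e^{-2\lambda tg}-1)/t\to -2\lambda g$.

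\emph{Main obstacle.} The key technical point is the remainder $R$, specifically the contribution coming from $-\lambda \int_0^{T-S}\sfd_\Y^2(y_r,z)\,\d r$ when $\lambda<0$. The naive triangle-inequality bound on $\sfd_\Y^2(y_r, z)$ produces a term of order $t|g(x)-g(\tilde x)|\sfd_\Y^2(u(x),u(\tilde x))$, which is only $O(t)$ pointwise and would spoil the $O(t^2)$ remainder if one tried to extract an a.e.\ inequality without averaging. The saving feature --- and the structural reason the proof must pass through the averaged Korevaar--Schoen quantities before extracting an a.e.\ inequality --- is that $|g(x)-g(\tilde x)|\le \Lip(g)\,r$ on $B_r(x)$ supplies an extra factor of $r$, making this contribution disappear in the $r\to 0$ limit.
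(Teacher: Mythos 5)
Your argument is correct, and it reaches \eqref{eq:stimapunt} by a genuinely different route than the paper. Both proofs share the same skeleton --- the crude contraction estimate giving $u_t\in\sfKS^{1,2}(\Omega,\Y)$, then Lemma \ref{lem:apriori} applied to the flow of $f$ with the bounds $|\partial^-f|\le\Lip(f)$, $\sfd_\Y({\sf GF}^f_r(y),z)\le r\Lip(f)+\sfd_\Y(y,z)$ and $\theta_\lambda(r)\le re^{2\lambda^- r}$ --- and both hinge on exactly the structural point you isolate: the one remainder that is only $O(t)$ pointwise, namely $-\lambda\int_0^{T-S}\sfd_\Y^2(y_r,z)\,\d r\lesssim t\,|g(x)-g(\tilde x)|\,\sfd_\Y^2(u(x),u(\tilde x))$, is rescued by the extra factor $|g(x)-g(\tilde x)|\lesssim\sfd(x,\tilde x)$. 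The divergence is in how the two-point inequality is turned into a statement about $|\d u_t|_{\sf HS}$. The paper evaluates the estimate along flow lines $\gamma_s=\sfFL^v_s(x)$ of a test vector field, divides by $s^2$, uses Proposition \ref{prop:dercurv} to get the directional bound \eqref{eq:interm3} on $|\d u_t(v)|^2$, and sums over a local Hilbert base of $L^2_\mm(T\X)$. You instead divide by $r^2$, average over $B_r(x)$, and pass to the limit using only Theorem \ref{thm:defks}: the quadratic terms converge in $L^1$ to $\tfrac1{d+2}|\d u_t|^2_{\sf HS}$ and $\tfrac1{d+2}|\d u|^2_{\sf HS}$, the cross term is identified by polarizing the Korevaar--Schoen limit for the real-valued Sobolev functions $g$, $f\circ u$ and $g+f\circ u$, and the a.e.\ inequality is extracted along a subsequence. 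Your route is more self-contained (no Regular Lagrangian Flows, no directional energy, no tangent module), while the paper's route yields the finer directional inequality, of which \eqref{eq:stimapunt} is the trace. One remark that is not a fault of yours: integrating \eqref{eq:stimapunt} (by either route) produces the cross term in \eqref{eq:dervaru} with a factor $\tfrac1{d+2}$, i.e.\ the right-hand side should read $-\int_\Omega\tfrac{\lambda}{d+2}g|\d u|^2_{\sf HS}+\tfrac1{d+2}\la\d(f\circ u),\d g\ra\,\d\mm$; the display as printed appears to drop that factor on the second summand.
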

\begin{proof} The map $x\mapsto(tg(x),u(x))$ is Borel and essentially separably valued and the map $(t,y)\mapsto{\sf GF}^f_t(y)$ is continuous, hence $x\mapsto u_t(x)$ is Borel and essentially separably valued. Also, the identity  \eqref{eq:metconv} and the trivial estimate $|\partial^-f|\leq\Lip(f)$ show that $t\mapsto {\sf GF}^f_t(y)$ is $\Lip(f)$-Lipschitz for every $y\in\Y$, thus $\sfd_\Y(u_t(x),\bar y)\leq t\sup(g)\Lip(f)+\sfd_\Y(u(x),\bar y)$, for every $\bar y \in \Y$, from which it easy follows that $u_t\in L^2(\Omega,\Y)$. Taking also into account the contraction property \eqref{eq:contr} we obtain  that
\[
\begin{split}
\sfd_\Y(u_t(x),u_t(y))&\leq e^{\lambda^- t(g(x)+g(y))}\sfd_\Y\big(u(x),{\sf GF}^f_{t|g(y)-g(x)|}(u(y))\big)\\
&\leq  e^{2\lambda^- t\sup g}\big(\sfd_\Y(u(x),u(y))+t\Lip(g)\Lip(f)\sfd(x,y)\big) 
\end{split}
\]
and thus
\[
\sfks_{2,r}^2[u_t,\Omega] (x)\leq 2e^{4\lambda^- t\sup g}\big(\sfks_{2,r}^2[u,\Omega] (x)+t^2\Lip^2(g)\Lip^2(f)\big).
\]
Integrating and using the fact that $\mm(\Omega)<\infty$ we conclude that $u_t\in \sfKS^{1,2}(\Omega,\Y)$.

In order to obtain  \eqref{eq:stimapunt} we need to be more careful in our estimates and to this aim we shall use Lemma \ref{lem:apriori} and Proposition \ref{prop:dercurv} above. Let $\gamma\colon [0,S]\to\Omega$ be a Lipschitz curve: for any $s\in[0,S]$ the bound \eqref{eq:apriorigf} gives (here we are fixing a Borel representative of $u$ and thus of $u_t$, but notice that the estimate \eqref{eq:interm3} does not depend on such choice):
\[
\begin{split}
\sfd^2_\Y&(u_t(\gamma_{0}),u_t(\gamma_{s}))\\
\leq  &e^{-2\lambda t(g(\gamma_{0})\pm |g(\gamma_{0})-g(\gamma_{s})|)}\Big(\sfd_\Y^2(u(\gamma_{0}),u(\gamma_{s}))+2t(g(\gamma_{s})-g(\gamma_{0}))(f(u(\gamma_{0}))-f(u(\gamma_{s})))\\
&+\int_0^{|t(g(\gamma_{0})-g(\gamma_{s}))|}2\Lip^2(f)\theta_\lambda(r)+\lambda^-\big(\sfd_\Y^2({\sf GF}^f_r(u(\gamma_{0})),u(\gamma_{s}))+\sfd_\Y^2({\sf GF}^f_r(u(\gamma_{s})),u(\gamma_{0}))\big)\,\d r\Big),
\end{split}
\]
where the sign in $\pm |g(\gamma_{0})-g(\gamma_{s})|$ depends on the sign of $\lambda$. Now use again the fact that $r\mapsto {\sf GF}^f_r(u(\gamma_{0}))$ is $\Lip(f)$-Lipschitz to get that
\[
\begin{split}
\sfd_\Y^2({\sf GF}^f_r(u(\gamma_{0})),u(\gamma_{s}))&\leq 2r^2\Lip^2(f)+2\sfd_\Y^2(u(\gamma_{0}),u(\gamma_{s})),
\end{split}
\]
notice that the same bounds holds for $\sfd_\Y^2({\sf GF}^f_r(u(\gamma_{s})),u(\gamma_{0}))$, that 
\[
|t(g(\gamma_{0})-g(\gamma_{s}))|\leq ts\Lip(g)\Lip(\gamma)
\]
and that $\theta_\lambda(t)\leq te^{2\lambda^-t}$ to conclude that, for some constant $C$ depending only on $f,g,\Lip(\gamma),T$ and every $t\in[0,T]$, we have
\begin{equation}
\label{eq:interm}
\begin{split}
\sfd^2_\Y&(u_t(\gamma_{0}),u_t(\gamma_{s}))
\leq e^{-2\lambda tg(\gamma_{0})+Cs}\Big(\sfd_\Y^2(u(\gamma_{0}),u(\gamma_{s}))\\
&+2t(g(\gamma_{s})-g(\gamma_{0}))(f(u(\gamma_{0}))-f(u(\gamma_{s})))+Ct^2s^2+Cts\sfd_\Y^2(u(\gamma_{0}),u(\gamma_{s}))\Big).
\end{split}
\end{equation}
Now let $v$ be a test vector field  on $\X$ and $\sfFL_s^v$ its Regular Lagrangian Flow and recall that since $g,f\circ u\in W^{1,2}(\Omega)$, by \eqref{eq:derr} we know that for any $K\subset\Omega$ compact we have
\begin{equation}
\label{eq:interm2}
\frac{g\circ \sfFL_s^v-g}s\to \d g(v)\qquad\text{ and }\qquad\frac{f\circ u\circ \sfFL_s^v-f\circ u}s\to \d (f\circ u)(v)
\end{equation}
in $L^2(K)$ as $s\downarrow0$. Thus writing \eqref{eq:interm} for $\gamma_s:=\sfFL_s^v(x)$ for $\mm$-a.e.\ $x\in\Omega$, dividing by $s^2$, letting $s\downarrow0$ and recalling \eqref{eq:dery} and \eqref{eq:interm2} we conclude that
\begin{equation}
\label{eq:interm3}
|\d u_t(v)|^2\leq e^{-2\lambda tg }\Big(|\d u(v)|^2-2t \,\d g(v)\,\d(f\circ u)(v)+Ct^2\Big)\qquad\mm\text{-}a.e.\ \text{in} \ \Omega,
\end{equation}
having also used the arbitrariness of $K\subset\Omega$ compact and the fact that the Lipschitz constant of $t\mapsto \sfFL_s^v(x)$ is bounded by $\|v\|_{L^\infty}$. We have established \eqref{eq:interm3} for $v$ regular, but both sides of the inequality are continuous w.r.t.\ $L^0$-convergence of uniformly bounded vectors $v$ with values in $L^0_\mm(T\X)$, thus by density we deduce that \eqref{eq:interm3} is valid for any $v\in L^\infty_\mm(T\X)$. Hence writing \eqref{eq:interm3} for  $v$ varying in a local Hilbert base of $L^2_\mm(T\X)$ and adding up we deduce \eqref{eq:stimapunt}. Then \eqref{eq:dervaru} also follows.
\end{proof}
In order to state the analogue of \eqref{eq:casoliscio} in the non-smooth setting we need to recall the notion of measure-valued Laplacian as introduced in \cite{Gigli12} (the presentation that we make here is simplified by the fact that $\RCD$ spaces are infinitesimally Hilbertian). 

Thus let $\X$ be a $\RCD(K,N)$ space, $\Omega\subset\X$ open and bounded and  $f\in W^{1,2}(\Omega)$. We say that $f$ has a measure valued Laplacian in $\Omega$, and write $f  \in D(\DDelta,\Omega)$, provided there is a (signed) Radon measure $\mu$ on $\Omega$ such that
\[ 
\int g\, \d \mu = -\int \la \d f,\d  g\ra \, \d \mm\qquad\forall g \in \Lip_c(\Omega). 
\]
It is clear that this measure is unique and, denoting it by $\DDelta f\restr{\Omega}$, that the assignment $f \mapsto \DDelta f\restr{\Omega}$ is linear. 

We shall need the following criterium for checking whether $f  \in D(\DDelta,\Omega)$:  for $f \in W^{1,2} (\Omega)$ and $h \in L^1(\mm\restr{\Omega})$ we have
\begin{equation}
- \int_\X \la \d f, \d g \ra\, \d \mm \ge \int_\X gh \, \d \mm\quad\forall g \in \Lip_c(\Omega)^+ \quad \Rightarrow \quad f \in D(\DDelta,\Omega)\text{ and } \DDelta f\restr{\Omega}  \ge h\mm.\label{eq:DDeltagenu}
\end{equation}
We are now ready to state and prove the next theorem.
\begin{theorem}\label{thm:nablafu} Let $(\X,\sfd,\mm)$ be a $\RCD(K,N)$ space, $\Y$ be \Cat0 and $\Omega\subset \X$ open and bounded. Also, let   $f \in \Lip(\Y)$ be $\lambda$-convex, $\lambda\in\R$ and $u \in \sfKS^{1,2}(\Omega,\Y)$ be harmonic.

Then, $ f \circ u \in D(\DDelta,\Omega)$ and $\DDelta(f\circ u)\restr{\Omega}$ is a (signed) locally finite Radon measure satisfying
\begin{equation}
\DDelta(f\circ u)\restr{\Omega} \ge \frac{\lambda}{d+2} |\d u|_{\sf HS}^2 \mm. \label{eq:DDeltafu}
\end{equation}
\end{theorem}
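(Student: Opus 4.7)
The plan is to apply the criterion \eqref{eq:DDeltagenu} with $h = \frac{\lambda}{d+2}|\d u|_{\sf HS}^2$ (which lies in $L^1(\mm\restr\Omega)$ since $u \in \sfKS^{1,2}(\Omega,\Y)$ and $\Omega$ is bounded), and to produce the required one-sided Dirichlet inequality by testing the harmonicity of $u$ against the variations constructed in Lemma \ref{lem:varEfu}. Recall also that $f\circ u\in W^{1,2}(\Omega)$ because $f$ is Lipschitz and $\mm(\Omega)<\infty$ (see the remark preceding Lemma \ref{lem:varEfu}).

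Fix an arbitrary $g\in \Lip_c(\Omega)^+$ and consider the family $u_t(x):={\sf GF}^f_{tg(x)}(u(x))$ for $t>0$. By Lemma \ref{lem:varEfu} we have $u_t\in \sfKS^{1,2}(\Omega,\Y)$ and the one-sided bound
\begin{equation*}
\limsup_{t\downarrow 0}\frac{\E^\sfKS(u_t)-\E^\sfKS(u)}{t}\le -\int_\Omega \frac{\lambda}{d+2}\,g\,|\d u|_{\sf HS}^2 + \langle \d(f\circ u),\d g\rangle\,\d\mm.
\end{equation*}
The key observation is that since $g$ is compactly supported in $\Omega$, the map $u_t$ coincides with $u$ outside $\mathrm{supp}(g)\Subset \Omega$, so $\sfd_\Y(u_t,u)$ has compact support in $\Omega$ and belongs to $W^{1,2}(\Omega)$; hence it lies in $W^{1,2}_0(\Omega)$. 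Coupled with $\sfd_\Y(u,\bar u)\in W^{1,2}_0(\Omega)$ and the triangle inequality $|\sfd_\Y(u_t,\bar u)-\sfd_\Y(u,\bar u)|\le \sfd_\Y(u_t,u)$, this gives $\sfd_\Y(u_t,\bar u)\in W^{1,2}_0(\Omega)$, so $u_t\in \sfKS^{1,2}_{\bar u}(\Omega,\Y)$ and $\E^\sfKS_{\bar u}(u_t)=\E^\sfKS(u_t)$.

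Since $u$ is harmonic with $\bar u$ as boundary value, it minimizes $\E^\sfKS_{\bar u}$, hence $\E^\sfKS(u_t)\ge \E^\sfKS(u)$ for every $t>0$. Combined with the previous bound this yields
\begin{equation*}
0\le -\int_\Omega \frac{\lambda}{d+2}\,g\,|\d u|_{\sf HS}^2\,\d\mm - \int_\Omega \langle \d(f\circ u),\d g\rangle\,\d\mm,
\end{equation*}
i.e.\ $-\int_\Omega \langle \d(f\circ u),\d g\rangle\,\d\mm\ge \int_\Omega g\,\tfrac{\lambda}{d+2}|\d u|_{\sf HS}^2\,\d\mm$ for every $g\in \Lip_c(\Omega)^+$. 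The criterion \eqref{eq:DDeltagenu} now gives $f\circ u\in D(\DDelta,\Omega)$ together with \eqref{eq:DDeltafu}; local finiteness of the signed Radon measure $\DDelta(f\circ u)\restr\Omega$ is then automatic from the criterion and the $L^1$-regularity of the lower bound.

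The main technical obstacle is ensuring that the perturbation $u_t$ is an admissible competitor with the same boundary trace as $u$, i.e.\ that $u_t\in \sfKS^{1,2}_{\bar u}(\Omega,\Y)$; this is what forces the choice $g\in \Lip_c(\Omega)$ rather than general Lipschitz $g$, and is handled via the localization argument above. All the quantitative work, including the delicate passage from metric-speed estimates for ${\sf GF}^f$ to pointwise Hilbert–Schmidt bounds on $\d u_t$ and the identification of directional derivatives of $f\circ u$, has already been absorbed into Lemma \ref{lem:varEfu}.
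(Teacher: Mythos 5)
Your proposal is correct and follows essentially the same route as the paper: perturb $u$ by $u_t={\sf GF}^f_{tg}(u)$ for $g\in\Lip_c(\Omega)^+$, invoke \eqref{eq:dervaru} from Lemma \ref{lem:varEfu}, use that $u_t$ and $u$ share the same boundary value (since $g$ has compact support) together with the minimality of $u$ to get $\limsup_{t\downarrow0}\frac{\E^{\sfKS}(u_t)-\E^{\sfKS}(u)}{t}\geq 0$, and conclude via \eqref{eq:DDeltagenu}. The extra detail you supply on why $\sfd_\Y(u_t,\bar u)\in W^{1,2}_0(\Omega)$ is a welcome elaboration of a step the paper only sketches.
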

\begin{proof} As noticed before Lemma \ref{lem:varEfu}, under the stated assumptions we have $f\circ u\in W^{1,2}(\Omega)$. Now let $g \in \Lip_c(\Omega)^+$ be arbitrary and apply Lemma \ref{lem:varEfu} with these functions $f,g,u$ and define  $u_t \in \sfKS^{1,2}_{\bar{u}}(\Omega,\Y)$ accordingly. Notice that since ${\rm supp}(g)\subset\Omega$, we have that $u_t$ and $u$ agree on a neighbourhood of $\partial\Omega$ and thus have the same boundary value. 

Therefore from the fact that $u$ is harmonic and  \eqref{eq:dervaru} we deduce
\[ 
-\int_\Omega \la \d(f\circ u),\d g \ra \, \d \mm \ge  \frac{\lambda}{d+2}\int_\Omega g|\d u|_{\sf HS}^2\, \d \mm \qquad  \forall g \in \Lip_c(\Omega)^+
\]
and the conclusion comes from   \eqref{eq:DDeltagenu}.
\end{proof}

\begin{corollary}
Let $\Omega\subset \X$ be open, $\Y$ be \Cat0, $\bar{u} \in \sfKS^{1,2}(\Omega,\Y)$, $u$ harmonic map with $\bar{u}$ as boundary values and $f \in \Lip(\Y)$ be $2$-convex. If $f \circ u$  is constant then $u$
itself is constant map.
\end{corollary}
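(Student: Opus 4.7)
The plan is to apply Theorem \ref{thm:nablafu} with $\lambda=2$ on any bounded open $\Omega'\Subset\Omega$ (on which $u$ remains harmonic with respect to variations of compact support). This yields the measure inequality
\[
\DDelta(f\circ u)\restr{\Omega'}\ \geq\ \tfrac{2}{d+2}\,|\d u|_{\sf HS}^2\,\mm.
\]
Since $f\circ u$ is constant, we have $\d(f\circ u)\equiv 0$ $\mm$-a.e., so the left-hand side above is the zero measure; combined with the non-negativity of the right-hand side this forces $|\d u|_{\sf HS}=0$ $\mm$-a.e.\ on $\Omega'$, and by the arbitrariness of $\Omega'$ the same vanishing holds $\mm$-a.e.\ on $\Omega$.

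I would then translate this pointwise vanishing into the claim that $u$ is constant on each connected component of $\Omega$. From the integral representation in Theorem \ref{thm:defks} we obtain $\E^{\sfKS}(u)=0$, whence by characterization $(ii)$ of the same theorem we may take the dominating function $G$ to be identically zero; that is, $|\d(\varphi\circ u)|=0$ $\mm$-a.e.\ for every $1$-Lipschitz $\varphi\colon \Y\to\R$ with $\varphi(\bar y)=0$. Since real Sobolev functions on a $\RCD$ space with vanishing minimal weak upper gradient are constant on connected components, each such $\varphi\circ u$ is locally constant. Specializing to $\varphi(\cdot)=\sfd_\Y(\cdot,y)-\sfd_\Y(\bar y,y)$ for $y$ ranging over a countable dense subset of the (essentially separable) image of $u$, and using that this countable family separates points of $\Y$, I would conclude that $u$ itself is constant on each connected component of $\Omega$.

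The only genuine subtlety is the last step, from $\E^{\sfKS}(u)=0$ to $u$ being locally constant, and this is a routine density argument once Theorem \ref{thm:defks}$(ii)$ is in hand; the geometric heart of the statement, namely the curvature-type inequality $\DDelta(f\circ u)\geq \lambda(d+2)^{-1}|\d u|_{\sf HS}^2\mm$, has already been absorbed into Theorem \ref{thm:nablafu}.
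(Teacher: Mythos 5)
Your argument is correct and follows essentially the same route as the paper, whose proof is the one-liner ``apply Theorem \ref{thm:nablafu}, then $|\d u|_{\sf HS}$ vanishes and conclude.'' You simply fill in the details the paper leaves implicit: the localization to bounded subdomains (needed since Theorem \ref{thm:nablafu} assumes $\Omega$ bounded), the fact that constancy of $f\circ u$ kills the left-hand side of \eqref{eq:DDeltafu}, and the passage from $|\d u|_{\sf HS}=0$ to (local) constancy of $u$ via Theorem \ref{thm:defks}$(ii)$ and a countable separating family of distance functions.
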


\begin{proof}
Apply Theorem \ref{thm:nablafu}, then $|\d u|_{\sf HS}$ vanishes and conclude.
\end{proof}

Let us now discuss a simple and explicit example of Laplacian of a map. 
\begin{example}\label{ex:s1}{\rm Let $\Y:=\R^2$, $\X:=\R/\Z$ equipped with the standard distances and measure, and $\Omega=\X$. Then a direct application of the definitions in Theorem \ref{thm:defks} show that $u=(u_1,u_2)\colon \X\to\Y$ is in $\sfKS^{1,2}(\X,\Y)$ if and only if $u_1\circ{\rm p},u_2\circ{\rm p}\colon \R\to\R$ are in $W^{1,2}_{loc}(\R)$, where ${\rm p}\colon \R\to\R/\Z=\X$ is the natural projection, with 
\[
\E^\sfKS(u)=\tfrac c2\Big(\int_\X |u_1'|^2(\theta)+ |u_2'|^2(\theta)\,\d\theta \Big),
\]
for some universal constant $c>0$. Then it is clear that $u\in D(\Delta)$ if and only if $ (u_1\circ{\rm p})'',( u_2\circ{\rm p})''\in L^2_{loc}(\R)$ and that in this case
\[
\Delta  u=c( u''_1,u''_2).
\]
Now let $u(\theta):=(\cos(2\pi\theta),\sin(2\pi\theta))$ be the canonical embedding of $\X$ in $\Y$. Then $\Delta u=-u$ and in particular for any $\theta\in\X$ we have that $\Delta u(\theta)\in \T_{u(\theta)}\R^2\sim\R^2$ is orthogonal to the tangent space of $\X$ seen as a subset of $\R^2=\Y$. 

This is interesting because one can define the differential $\d u$ of $u$, even in very abstract situations \cite{GPS18}, by means related to Sobolev calculus on the metric measure space $(\Y,\sfd_\Y,\mu:=u_\sharp(|\d u|_{\sf HS}^2\mm))$ and tangent vector fields in this metric measure space only see directions which are tangent to the graph of $u$ (this is rather obvious in this example, but see for instance \cite{MLP20} for a discussion of this phenomenon in more general cases). This means that, curiously,  $\Delta u$ cannot be computed starting from $\d u$ and using Sobolev calculus in the spirit developed in \cite{Gigli14}, \cite{Gigli17}, simply because $\Delta u$ does not belong to the tangent module $L^2_\mu(T\Y)$
}\fr\end{example}

We conclude pointing out that while in the Definition \ref{def:laplacian} of Laplacian of a map we called into play the space $L^2(u^*\T_G\Y,\mm\restr\Omega)$ as introduced in Definition \ref{def:l2se2}, in some circumstances it might be useful to deal with a notion of Laplacian related to the Borel $\sigma$-algebra $\cB(u^*\T_G\Y)$ - and thus to the characterization given in Proposition \ref{prop:link} -, which however is only available for separable spaces $\Y$. 

In this direction it is worth to underline that one can always reduce to such case thanks to the following two simple results: the first says that given $u\in L^2(\Omega,\Y_{\bar y})$ we can always find a separable \Cat0 subspace $\tilde\Y$ of $\Y$ containing the gradient flow trajectory of $\E^\sfKS_{\bar u}$ starting from $u$, the second ensures that this restriction does not affect the notion of minus-subdifferential. 
\begin{proposition}
Let $(\X,\sfd,\mm)$ be a $\RCD(K,N)$ space, $(\Y,\sfd_\Y,\bar y)$ a pointed \Cat0-space, $\Omega\subset\X$ open, $\bar u\in \sfKS^{1,2}(\Omega,\Y_{\bar{y}})$ and $u\in L^2(\Omega,\Y_{\bar y})$. Also, let $(u_t)$ be the gradient flow trajectory for $\E^\sfKS_{\bar u}$ starting from $u$. 

Then, there exists a separable \Cat0 subspace $\tilde\Y\subset\Y$ such that $\mm(u_t^{-1}(\Y\setminus\tilde\Y))=0$ for every $t\geq 0$. Similarly for the functional $\E^{\sfKS}$.
\end{proposition}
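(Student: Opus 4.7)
The plan is to produce a single separable, closed, convex subset $\tilde\Y\subset\Y$ that catches the essential image of $u_t$ for every $t\geq 0$ simultaneously, by exploiting the continuity of the flow in $L^2$.

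First I would observe that by Theorem \ref{thm:GF} the trajectory $[0,\infty)\ni t\mapsto u_t\in L^2(\Omega,\Y_{\bar y})$ is continuous (locally Lipschitz on $(0,\infty)$, with $u_t\to u$ as $t\downarrow 0$). Fixing any countable dense subset $(t_n)\subset[0,\infty)$, the family $\{u_{t_n}\}_n$ is therefore $\sfd_{L^2}$-dense in the full trajectory. For each $n$, since $u_{t_n}\in L^2(\Omega,\Y_{\bar y})$ is essentially separably valued by definition, I can find a countable set $D_n\subset\Y$ with $u_{t_n}(x)\in\overline{D_n}$ for $\mm$-a.e.\ $x\in\Omega$. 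Let $D:=\{\bar y\}\cup\bigcup_n D_n$, a countable subset of $\Y$.

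Next I would take $\tilde\Y$ to be the closed convex hull of $D$ in $\Y$. Since $\Y$ is uniquely geodesic and the \Cat0 inequality \eqref{eq:cat0def} is inherited by any closed convex subspace (geodesics never leave it, and the comparison is automatic), $\tilde\Y$ is itself \Cat0 with the induced distance. Separability of $\tilde\Y$ is the only point needing an argument: setting $D^{(0)}:=D$ and $D^{(k+1)}:=D^{(k)}\cup\{(\G_y^z)_s:y,z\in D^{(k)},\ s\in[0,1]\cap\Q\}$, each $D^{(k)}$ is countable and, by the continuous dependence of geodesics on their endpoints together with the density of $\Q\cap[0,1]$ in $[0,1]$, the countable set $\bigcup_k D^{(k)}$ is dense in $\tilde\Y$.

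It then remains to upgrade from the countable set of times $(t_n)$ to every $t\geq 0$. By construction $u_{t_n}(x)\in\tilde\Y$ for $\mm$-a.e.\ $x$ and every $n$. Given $t\geq 0$, pick $t_{n_k}\to t$; by continuity $\sfd_{L^2}(u_{t_{n_k}},u_t)\to 0$, so along a (not relabelled) subsequence $u_{t_{n_k}}(x)\to u_t(x)$ for $\mm$-a.e.\ $x$. Since $\tilde\Y$ is closed in $\Y$, the limit $u_t(x)$ lies in $\tilde\Y$ off a null set, yielding $\mm(u_t^{-1}(\Y\setminus\tilde\Y))=0$ for every $t\geq 0$. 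The `similarly' statement for $\E^\sfKS$ is handled by the identical construction with $\E^\sfKS$ in place of $\E^\sfKS_{\bar u}$.

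The only point where I anticipate any subtlety is the separability of the closed convex hull of a countable set — but this reduces to iterating the rational-geodesic construction above, so no genuinely hard step is expected.
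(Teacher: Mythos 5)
Your argument is correct and follows essentially the same route as the paper: take the closed convex hull of a countable set catching the essential images at a countable dense set of times, observe that such a hull is separable and \Cat0 by unique geodesics depending continuously on endpoints, and upgrade to all times via $L^2$-continuity of the trajectory plus extraction of an $\mm$-a.e.\ convergent subsequence. The only cosmetic difference is that the paper realizes the a.e.\ convergent subsequence through a summable-increments trick, whereas you invoke the standard subsequence fact directly; both are fine.
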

\begin{proof}
From the fact that geodesics on $\Y$ are unique and vary continuously with the endpoint it is easy to see that the closed convex hull of a separable set (i.e.\ the smallest closed and convex set containing the given set) is also separable. Use this and the fact that maps in $L^2(\Omega,\Y_{\bar y})$ are by definition essentially separably valued to find $\tilde\Y\subset\Y$ which is $\Cat0$ with the induced metric and such that $\mm(u_t^{-1}(\Y\setminus\tilde\Y))=0$ for every $t\in\Q^+$. We claim that $\tilde \Y$ satisfies the conclusion. To see this, pick $t\geq 0$, let $(t_n)\subset\Q^+$ be converging to $t$ and up to pass to a non-relabeled subsequence assume that $\sum_n\sfd_{L^2}(u_{t_{n+1}},u_{t_n})<\infty$. Then from the triangle inequality in $L^2(\Omega)$ and the monotone convergence we see that $\|\sum_n\sfd_\Y(u_{t_{n+1}},u_{t_n})\|_{L^2}\leq \sum_n\sfd_{L^2}(u_{t_{n+1}},u_{t_n})<\infty$ so that in particular for $\mm$-a.e.\ $x\in\Omega$ we have $\sum_n\sfd_\Y(u_{t_{n+1}},u_{t_n})(x)<\infty$ which in turn implies that $(u_{t_n}(x))\subset \tilde\Y$ is a Cauchy sequence, so that its limit $v(x)$ also belongs to $\tilde\Y$. The same kind of argument also shows that $(u_{t_n})$ converges to $v$ in $L^2(\Omega,\Y_{\bar y})$ and since we know, by the continuity of $(u_t)$ as $L^2(\Omega,\Y_{\bar y})$-valued curve, that $u_{t_n}\to u_t$ in $L^2(\Omega,\Y_{\bar y})$ we conclude that $u_t=v$, which proves our claim.
\end{proof}
To present our final result we need a bit of notation. Let $\Y$ be a \Cat0-space and $\tilde\Y$ a subspace which is also $\Cat0$ with the induced metric. Call $\mathcal I_{\tilde\Y}^\Y\colon \tilde\Y\to\Y$ the inclusion map. Then for every $y\in\tilde\Y$ the tangent space $\T_y\tilde\Y$ embeds isometrically into $\T_y\Y$ via the continuous extension of the map which sends $\alpha(\G_y^z)'_0\in \T_y\tilde\Y$ to $\alpha(\mathcal I_{\tilde \Y}^\Y(\G_y^z))'_0\in \T_y\Y$. In other words, we can regard a geodesic in $\tilde\Y$ also as a geodesic in $\Y$ and this provides a canonical immersion of $\T_y\tilde\Y$ in $\T_y\Y$ which for trivial reasons is an isometry. Abusing a bit the notation we shall denote such isometry by $\mathcal I_{\tilde\Y}^\Y$.

\begin{proposition}
Let $\Y$ be a \Cat0-space, $\E\colon \Y\to\R\cup\{+\infty\}$ a $\lambda$-convex and lower semicontinuous functional, $(y_t)$ a gradient flow trajectory for $\E$ starting from $y_0\in\Y$ and $\tilde\Y\subset \Y$ a subset which is also a $\Cat0$-space with the induced metric and such that $(y_t)\subset\tilde\Y$. Denote by $\tilde\E$ the restriction of $\E$ to $\tilde\Y$

Then,  $-\partial^-\E(y_0)\neq\emptyset$ if and only if $-\partial^-\tilde\E(y_0)\neq\emptyset$ and letting $v,\tilde v$ be the respective  elements of minimal norm we have $\mathcal I_{\tilde\Y}^\Y(\tilde v)=v$. Moreover, $(y_t)$ is also a gradient flow trajectory for $\tilde E$.
\end{proposition}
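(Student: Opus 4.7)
The plan is in three stages: first transfer the gradient-flow statement from $\Y$ to $\tilde\Y$, then relate the two tangent cones at $y_0$ via $\mathcal I_{\tilde\Y}^\Y$ and the difference quotients of $(y_t)$, and finally combine these with Theorem \ref{thm:rightD} to match the minimal-norm elements of the two minus-subdifferentials. Note that $\tilde\E$ is automatically $\lambda$-convex and lower semicontinuous on $\tilde\Y$ because, by uniqueness of geodesics in $\Cat0$-spaces, any $\Y$-geodesic between points of $\tilde\Y$ coincides with the $\tilde\Y$-geodesic and therefore lies in $\tilde\Y$.

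For the \emph{moreover} part, I would observe that $(y_t)\subset\tilde\Y$ is locally absolutely continuous in $\tilde\Y$ (the two metrics agree on $\tilde\Y$), and that for every $z\in\tilde\Y$ the EVI \eqref{eq:evi} for $\E$ in $\Y$ tested at $z$ reads verbatim as the EVI for $\tilde\E$ in $\tilde\Y$ at the same $z$. By Theorem \ref{thm:GFdef}(iii) applied inside $\tilde\Y$ this is enough to conclude that $(y_t)$ is a gradient flow trajectory for $\tilde\E$ starting from $y_0$.

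Next, the key observation is that for every sufficiently small $h>0$ the difference quotient $w_h:=\tfrac1h(\G_{y_0}^{y_h})'_0$ computed in $\tilde\Y$ is sent by $\mathcal I_{\tilde\Y}^\Y$ exactly to the corresponding difference quotient in $\T_{y_0}\Y$: indeed the unique $\Y$-geodesic from $y_0$ to $y_h\in\tilde\Y$ coincides with the $\tilde\Y$-geodesic between the same points. As $\mathcal I_{\tilde\Y}^\Y$ is an isometric embedding, strong convergence of $(w_h)$ in $\T_{y_0}\tilde\Y$ is equivalent to strong convergence of its image in $\T_{y_0}\Y$, with limits corresponding under $\mathcal I_{\tilde\Y}^\Y$.

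To conclude, assume $-\partial^-\tilde\E(y_0)\neq\emptyset$. Theorem \ref{thm:rightD} applied to $\tilde\E$ on $\tilde\Y$ and to the gradient flow trajectory obtained in the first step yields strong convergence $w_h\to\tilde v$ in $\T_{y_0}\tilde\Y$, where $\tilde v$ is the minimal-norm element of $-\partial^-\tilde\E(y_0)$ and $|\tilde v|_{y_0}=|\partial^-\tilde\E|(y_0)$. By the previous paragraph the images converge strongly, hence weakly, to $v:=\mathcal I_{\tilde\Y}^\Y(\tilde v)$ in $\T_{y_0}\Y$. The argument of Step 2 in the proof of Theorem \ref{thm:rightD}, which relies only on the EVI for $\E$ in $\Y$ and on the weak convergence of the difference quotients to a limit $v$, then produces $v\in -\partial^-\E(y_0)$. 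Combining Proposition \ref{prop:slopebnd} with the trivial bound $|\partial^-\tilde\E|(y_0)\le|\partial^-\E|(y_0)$ (the limsup in \eqref{eq:defsl} runs over a larger set in $\Y$) gives
\[
|\partial^-\E|(y_0)\le|v|_{y_0}=|\tilde v|_{y_0}=|\partial^-\tilde\E|(y_0)\le|\partial^-\E|(y_0),
\]
so equalities hold throughout and $v$ is the minimal-norm element of $-\partial^-\E(y_0)$. Conversely, if $-\partial^-\E(y_0)\neq\emptyset$ the same chain of inequalities forces $|\partial^-\tilde\E|(y_0)<\infty$, hence $-\partial^-\tilde\E(y_0)\neq\emptyset$ by Theorem \ref{thm:rightD}. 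The main obstacle is the implication $-\partial^-\tilde\E(y_0)\neq\emptyset\Rightarrow -\partial^-\E(y_0)\neq\emptyset$: the naive slope comparison goes the wrong way, and one must transport the strong convergence of the difference quotients through $\mathcal I_{\tilde\Y}^\Y$ in order to reuse Step 2 of the proof of Theorem \ref{thm:rightD} directly in the ambient space.
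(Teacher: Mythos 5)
Your proof is correct, but for the key implication $-\partial^-\tilde\E(y_0)\neq\emptyset\Rightarrow-\partial^-\E(y_0)\neq\emptyset$ it runs along a genuinely different track than the paper. The paper never re-enters the proof of Theorem \ref{thm:rightD}: it instead shows directly that $|\partial^-\E|(y_0)<\infty$ by a purely metric argument --- the difference quotients $\sfd_\Y(y_0,y_h)/h$ are bounded because they converge to $|\tilde v|_{y_0}$, the contraction property \eqref{eq:contr} propagates this bound to $\sup_{t,h\in(0,1)}\sfd_\Y(y_t,y_{t+h})/h$, hence $|\dot y_t^+|=|\partial^-\E|(y_t)$ is uniformly bounded for $t\in(0,1)$ by \eqref{eq:metconv}, and the lower semicontinuity of the slope (Lemma \ref{lem:slopelsc}) passes the bound down to $t=0$ --- after which Theorem \ref{thm:rightD} is invoked as a black box in $\Y$ to identify $v$ as the limit of the same difference quotients. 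You instead transport the strongly convergent difference quotients through the isometric embedding $\mathcal I_{\tilde\Y}^\Y$ and rerun Step 2 of the proof of Theorem \ref{thm:rightD} in the ambient space to verify the subdifferential inequality for $v=\mathcal I_{\tilde\Y}^\Y(\tilde v)$ directly, closing with the chain of slope (in)equalities; this buys you independence from the contraction estimate at the cost of reopening an internal step of an earlier proof rather than citing a stated result. Two remarks. First, when rerunning Step 2 you should dispose of the degenerate case $|\dot y_0^+|=|\partial^-\tilde\E|(y_0)=0$ separately (there \eqref{eq:d2variation} needs positive metric speed): in that case \eqref{eq:regul} forces the flow to be constant, and the EVI for $\E$ along a constant curve gives $0\in-\partial^-\E(y_0)$ at once, matching $\tilde v=0$. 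Second, your choice to establish the \emph{moreover} part first, via the observation that the EVI for $\E$ tested at points of $\tilde\Y$ is verbatim the EVI for $\tilde\E$, is actually an improvement on the paper's ordering: the paper applies Theorem \ref{thm:rightD} to $\tilde\E$ and the curve $(y_t)$ before having justified that $(y_t)$ is a gradient flow trajectory for $\tilde\E$ (which it only deduces at the very end from Corollary \ref{cor:eqfor}), and your arrangement removes that mild circularity.
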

\begin{proof}Assume that $-\partial^-\tilde\E(y_0)\neq\emptyset$. Then we know from Theorem \ref{thm:rightD} that $\frac1h(\G_{y_0}^{y_h})'_0\to\tilde v$ as $h\downarrow0$. Then clearly $\mathcal I_{\tilde\Y}^\Y(\frac1h(\G_{y_0}^{y_h})'_0)\to \mathcal I_{\tilde\Y}^\Y(\tilde v)$ and thus by Theorem \ref{thm:rightD} to conclude it is sufficient to prove that $|\partial^-\E|(y_0)<\infty$, because in that case we have that $\mathcal I_{\tilde\Y}^\Y(\frac1h(\G_{y_0}^{y_h})'_0)$ converges to the element of minimal norm in   $-\partial^-\E(y_0)\neq\emptyset$  (which in particular is not empty) as $h\downarrow0$. 

Since $\frac1h(\G_{y_0}^{y_h})'_0\to\tilde v$ we have in particular that $\frac{\sfd_\Y(y_0,y_h)}{h}=|\frac1h(\G_{y_0}^{y_h})'_0|_{y_0}\to |v|_{y_0}$ and thus $S:=\sup_{h\in(0,1)}\frac{\sfd_\Y(y_0,y_h)}{h}<\infty$. By the contractivity property \eqref{eq:contr} we deduce that 
\[
\sup_{t,h\in(0,1)}\frac{\sfd_\Y(y_t,y_{t+h})}{h}<(e^\lambda\vee 1)S=:S'
\]
and thus letting $h\downarrow0$ we deduce that $|\dot y_t^+|\leq S'$ for every $t\in(0,1)$. Taking into account \eqref{eq:metconv} and the lower semicontinuity of the slope recalled in Lemma \ref{lem:slopelsc} we conclude.

Viceversa, assume that $-\partial^-\E(y_0)\neq\emptyset$. Then by Theorem \ref{thm:rightD}  we know that $|\partial^-\E|(y_0)<\infty$ and since trivially we have $|\partial^-\tilde\E|\leq |\partial^-\E|\restr{\tilde \Y}$ we also have $|\partial^-\tilde\E|(y_0)<\infty$. Hence by Theorem \ref{thm:rightD} we deduce $-\partial^-\tilde\E(y_0)\neq\emptyset$ and the first part of the proof applies.

The last statement is a consequence of the first applied to $y_t$ for every $t>0$ and of Corollary \ref{cor:eqfor}.
\end{proof}

\def\cprime{$'$} \def\cprime{$'$}

\end{document}